\documentclass[10pt]{amsart}

\usepackage{times,amsmath,amsbsy,amssymb,amscd,mathrsfs}
\usepackage{graphicx,subfigure,epstopdf,wrapfig,chemarrow}

\usepackage{algorithm2e} 
\usepackage{multicol,multirow}
\usepackage{mathtools}
\usepackage[usenames,dvipsnames,svgnames,table]{xcolor}
\usepackage[all]{xy}
\usepackage{wrapfig}
\usepackage{tcolorbox}

\usepackage{tikz,tikz-cd}
\usepackage[utf8]{inputenc}
\usepackage{pgfplots} 
\usepackage{pgfgantt}
\usepackage{pdflscape}
\pgfplotsset{compat=newest} 
\pgfplotsset{plot coordinates/math parser=false}
\newlength\fwidth

\definecolor{myBlue}{rgb}{0.0,0.0,0.55}
\usepackage[pdftex,colorlinks=true,citecolor=myBlue,linkcolor=myBlue]{hyperref}

\usepackage[hyperpageref]{backref}

\usepackage{comment,enumerate,multicol,xspace}

  \newcounter{mnote}
  \setcounter{mnote}{0}
  
  \let\oldmarginpar\marginpar
    \renewcommand\marginpar[1]{\-\oldmarginpar[\raggedleft\footnotesize #1]%
    {\raggedright\footnotesize #1}}


%
%
%
%
%



\newtheorem{theorem}{Theorem}[section]
\newtheorem{lemma}[theorem]{Lemma}
\newtheorem{corollary}[theorem]{Corollary}

\newtheorem{remark}[theorem]{Remark}

\newcommand{\dx}{\,{\rm d}x}
\newcommand{\dd}{\,{\rm d}}

\newcommand{\bs}{\boldsymbol}

\newcommand{\curl}{{\rm curl\,}}
\renewcommand{\div}{\operatorname{div}}
\newcommand{\grad}{{\rm grad\,}}
\DeclareMathOperator*{\rot}{rot}

\newcommand{\tr}{\operatorname{tr}}
\newcommand{\dev}{\operatorname{dev}}

\newcommand{\mskw}{\operatorname{mskw}}

\newcommand{\step}[1]{\noindent\raisebox{1.5pt}[10pt][0pt]{\tiny\framebox{$#1$}}\xspace}

\newcommand{\vertiii}[1]{{\left\vert\kern-0.25ex\left\vert\kern-0.25ex\left\vert #1 
    \right\vert\kern-0.25ex\right\vert\kern-0.25ex\right\vert}}

\usepackage{booktabs}
\usepackage{stmaryrd}
\usepackage{scalefnt}
\usepackage{graphicx}
\allowdisplaybreaks[1]
\newcommand{\Oplus}{\ensuremath{\vcenter{\hbox{\scalebox{1.5}{$\oplus$}}}}}

\begin{document}
\title[Finite Element curl\,div Complex]{Distributional Finite Element curl\,div Complexes and Application to Quad Curl Problems}
\author{Long Chen}%
 \address{Department of Mathematics, University of California at Irvine, Irvine, CA 92697, USA}%
 \email{chenlong@math.uci.edu}%
 \author{Xuehai Huang}%
 \address{School of Mathematics, Shanghai University of Finance and Economics, Shanghai 200433, China}%
 \email{huang.xuehai@sufe.edu.cn}%
\author{Chao Zhang}%
\address{School of General Education, Wenzhou Business College, Wenzhou 325035, China}%
\email{zcmath@163.sufe.edu.cn}%

\thanks{The first author was partially  supported by NSF DMS-2012465, DMS-2309777, and DMS-2309785. The second author was supported by the National Natural Science Foundation of China Project 12171300, and the Natural Science Foundation of Shanghai 21ZR1480500.}

\makeatletter
\@namedef{subjclassname@2020}{\textup{2020} Mathematics Subject Classification}
\makeatother
\subjclass[2020]{
58J10;   
65N12;   
65N22;   
65N30;   
}

 \begin{abstract}
The paper addresses the challenge of constructing finite element $\curl\div$ complexes in three dimensions. Tangential-normal continuity is introduced in order to develop distributional finite element $\curl\div$ complexes. The spaces constructed are applied to discretize the quad curl problem, demonstrating optimal order of convergence. Furthermore, a hybridization technique is proposed, demonstrating its equivalence to nonconforming finite elements and weak Galerkin methods.
\end{abstract}

\keywords{distributional finite element $\curl\div$ complex, quad curl problem, error analysis, hybridization}

\maketitle


\section{Introduction}
In this work, we will 
construct 
distributional finite element $\text{curl}\,\text{div}$ complexes in three dimensions, and apply it to solve 
the fourth-order curl problem $-\curl\Delta\curl \boldsymbol{u}=\boldsymbol{f}, \div \bs u = 0$ in a domain $\Omega\subset \mathbb R^3$ with boundary conditions $\boldsymbol{u} \times \boldsymbol{n}=\operatorname{curl} \boldsymbol{u} =0$ on $\partial \Omega$. Such problem arises from multiphysics simulation such as modeling a
magnetized plasma in magnetohydrodynamics~\cite{kingsep1990reviews,Chacon;Simakov;Zocco:2007Steady-state}.

We first give a brief literature review on distributional finite elements. The distributional finite element de Rham complexes are adopted to construct equilibrated residual error estimators in~\cite{BraessSchoeberl2008}, which are then extended to discrete distributional differential forms in~\cite{Licht2017}, discrete distributional elasticity complexes in~\cite{Christiansen2011}, and discrete distributional Hessian and $\text{divdiv}$ complexes in~\cite{HuLinZhang2023} with applications in cohomology groups. Recently, in~\cite{ChenHuang2023}, the distributional finite element $\text{divdiv}$ element has been constructed and applied for solving the mixed formulation of the biharmonic equation in arbitrary dimensions. The distributional finite elements allow the use of piecewise polynomials with less smoothness, which is especially useful for high-order differential operators.

Let us use a more familiar $2$nd order operator $\nabla^2$ as an example to illustrate the motivation. The $H^2$-conforming finite element on tetrahedron meshes~\cite{HuLinWu2023,ChenHuang2021,ChenHuang2024,Zhang:2009family} requires polynomials of degree $9$ and above and possesses extra smoothness at vertices and edges. Therefore, it is hardly used in practice. Simple finite elements can be constructed if the differential operators are understood in the distribution sense.

For the discretization of the biharmonic equation in two dimensions, the so-called Hellan-Herrmann-Johnson (HHJ) mixed method~\cite{Hellan1967,Herrmann1967,Johnson1973} requires only normal-normal continuous finite elements for symmetric tensors and thus $C^0$-conforming Lagrange element, not $C^1$-conforming elements,  can be used for displacement. This normal-normal continuous finite element is then employed to solve linear elasticity~\cite{PechsteinSchoeberl2011} and Reissner-Mindlin plates~\cite{PechsteinSchoeberl2017}, and used to construct the first two-dimensional distributional finite element $\text{divdiv}$ complexes in~\cite{ChenHuHuang2018a}. Recently, the distributional finite element $\text{divdiv}$ element for solving the mixed formulation of the biharmonic equation has been extended to arbitrary dimensions in~\cite{ChenHuang2023}.


Now we move to the curl div operator. Introduce the space 
$H(\curl\div,\Omega; \mathbb T) := \left\{\boldsymbol\tau \in L^2(\Omega; \mathbb T): \curl \div \boldsymbol\tau\in L^{2}(\Omega;\mathbb R^3)\right\}$, where $\mathbb T$ is the space of traceless tensors.
A mixed formulation of the quad-curl problem is to find $\boldsymbol{\sigma}\in H(\operatorname{curl} \div, \Omega ; \mathbb{T})$, $\boldsymbol{u}\in L^2(\Omega;\mathbb R^3)$ and $\phi\in H_0^{1}(\Omega)$ such that
\begin{align*}
(\boldsymbol{\sigma},\boldsymbol{\tau})+b(\boldsymbol{\tau},\psi;\boldsymbol{u})&=0,  & & \boldsymbol{\tau} \in H(\operatorname{curl} \div, \Omega ; \mathbb{T}), \psi\in H_0^{1}(\Omega), \\
b(\boldsymbol{\sigma},\phi;\boldsymbol{v}) &=-\langle \boldsymbol{f}, \boldsymbol{v}\rangle, & &\boldsymbol{v} \in L^2(\Omega;\mathbb R^3),
\end{align*}
where the bilinear form 
$b(\boldsymbol{\tau},\psi;\boldsymbol{v}):=(\operatorname{curl} {\div}\boldsymbol{\tau}, \boldsymbol{v})+(\grad\psi, \boldsymbol{v}).$
The term $(\grad\psi, \boldsymbol{u})$ is introduced to impose the divergence free condition $\div \bs u = 0$. 

Finite element spaces conforming to $H(\curl\div,\Omega; \mathbb T)$
are relatively complicated due to the smoothness  requirement $\curl \div \boldsymbol\tau\in L^{2}(\Omega;\mathbb R^3)$. In the distributional sense
\begin{equation*}
\langle\curl\div \boldsymbol{\tau}, \boldsymbol{v}\rangle =-(\boldsymbol{\tau}, \grad\curl\boldsymbol{v}), \quad \bs v\in H(\grad\curl,\Omega),
\end{equation*}
the smoothness can be shifted to the test function $\bs v$, where $H(\grad\curl,\Omega) := \{\bs u \in  L^2(\Omega;\mathbb R^3): \curl \bs u \in H^1(\Omega;\mathbb R^3) \}$. Of course $H(\grad\curl)$-conforming finite elements are not easy to construct neither. For example, the $H(\grad\curl)$-conforming finite elements are constructed in~\cite{ZhangZhang2020,HuZhangZhang2020,ZhangWangZhang2019,ChenHuang2022,ChenHuang2024}, which requires polynomial of degree at least $7$ and dimension of shape function space at least $315$. 

The key idea is to strike a balance of the smoothness of the trial function $\bs \tau$ and the test function $\bs v$. Given a mesh $\mathcal T_h$, let  $H^s(\mathcal T_h)$ be the space of piecewise $H^s$ function. Introduce the traceless tensor space with tangential-normal continuity 
\begin{align*}
\Sigma^{\rm tn}:=\{\boldsymbol{\tau}  \in H^{1}(\mathcal T_h ; \mathbb{T}) :  \left.\llbracket\boldsymbol{n}\times\boldsymbol{\tau}\boldsymbol{n} \rrbracket\right|_{F}=0 \text { for each } F \in \mathring{\mathcal{F}}_{h}\},
\end{align*}
where $\llbracket\boldsymbol{n}\times\boldsymbol{\tau}\boldsymbol{n} \rrbracket$ is the jump of $\boldsymbol{n}\times\boldsymbol{\tau}\boldsymbol{n}$ across all interior faces $F$. While for the test space, we use space 
\begin{equation*}
V_0^{\curl} := H_0(\curl,\Omega)\cap H^1(\curl,\mathcal T_h), 
\end{equation*}
where $H^1(\curl,\mathcal T_h):=\{\boldsymbol{v}\in H^1(\mathcal T_h;\mathbb R^3): \curl\boldsymbol{v}\in H^1(\mathcal T_h;\mathbb R^3)\}$.
Define a weak operator
$
(\curl\div)_w: \Sigma^{\rm tn} \to (V_0^{\curl})'
$
by
\begin{equation}\label{intro:weakcurldiv}
\langle (\curl\div)_w \boldsymbol{\tau}, \boldsymbol{v}\rangle :=\sum_{T\in \mathcal{T}_{h}}(\operatorname{div} \boldsymbol{\tau}, \curl\boldsymbol{v})_{T}-\sum_{F\in \mathring{\mathcal{F}}_{h}}(\llbracket \boldsymbol{n}^{\intercal}\boldsymbol{\tau}\boldsymbol{n} \rrbracket, \boldsymbol{n}_F\cdot\curl\boldsymbol{v})_{F},
\end{equation}
which is analog to the weak divdiv operator in HHJ mixed method~\cite{Hellan1967,Herrmann1967,Johnson1973}.
Now the function $\bs \tau$ is tangential-normally continuous and $\bs v$ is tangentially continuous so that $\boldsymbol{n}_F\cdot\curl\boldsymbol{v} = \rot_F \bs v$ is continuous on face $F$. One can easily show $(\curl\div)_w\bs \tau = \curl \div \bs \tau$ in the distribution sense by taking $\bs v\in C_0^{\infty}(\Omega;\mathbb R^3)$ in \eqref{intro:weakcurldiv}. 


We will use the tangential-normal continuous finite element constructed in~\cite{GopalakrishnanLedererSchoeberl2020a} for the discretization of $\Sigma^{\rm tn}$. 
For an integer $k\geq 0$, take $\mathbb P_{k}(T;\mathbb T)$ as the space of shape functions. The degrees of freedom (DoFs) are given by:
\begin{subequations}
\begin{align}
\label{intro:curdivfemdof1} \int_{F} \boldsymbol{t}_i^{\intercal}\boldsymbol{\tau}\boldsymbol{n}\, q \dd S, \quad & q \in \mathbb P_{k}(F), i = 1,2, F\in\mathcal F(T), \\
\label{intro:curdivfemdof2}  \int_{T} \boldsymbol{\tau} : \boldsymbol{q} \dx, \quad & \boldsymbol{q} \in \mathbb{P}_{k-1}(T; \mathbb{T}),
\end{align}
\end{subequations}
where $\boldsymbol t_{1}$ and $\boldsymbol t_{2}$ denote two
mutually perpendicular unit tangential vectors of face $F$ and are used to determine the tangential component of the vector $\bs \tau\bs n$.
The global finite element space $\Sigma_{k,h}^{\rm tn}$ by requiring single valued \eqref{intro:curdivfemdof1} is tangential-normally continuous. 

We use N\'ed\'elec elements $\mathring{\mathbb V}_{h}^{\curl}\subset V_0^{\curl}$ for the tangential continuous vector space, and use the Riesz representation of the $L^2$-inner product to bring the abstract dual to a concrete function. Define
$(\curl\div)_h:  \Sigma_{h}^{\rm tn} \to \mathring{\mathbb V}_{h}^{\curl}$ such that
\begin{equation}\label{intro:curldivh}
((\curl\div)_h\bs{\tau}_h, \bs v_h) = \langle (\curl\div)_w \boldsymbol{\tau}_h, \boldsymbol{v}_h\rangle, \quad \bs v_h\in \mathring{\mathbb V}_{h}^{\curl},
\end{equation}
and its $L^2$-adjoint operator $(\grad\curl)_h: \mathring{\mathbb V}_{h}^{\curl}  \to \Sigma_{h}^{\rm tn}$. 

By including the tensor version of the N\'ed\'elec elements $\mathbb V_{k,h}^{\curl}(\mathbb{M})$, and the Lagrange elements $\mathbb V_{k+1,h}^{\grad}(\mathbb{R}^{3})$, we are able to construct the distributional finite element $\curl\div$ complex:
\begin{equation}\label{intro:curldivfecomplex}
\begin{aligned}
\mathbb{R}^{3} \times\{0\} \rightarrow  \mathbb V_{k+1,h}^{\grad}(\mathbb{R}^{3}) \times \mathbb{R} &\xrightarrow{(\grad\!, \,\mskw\boldsymbol{x})} \mathbb V_{k,h}^{\curl}(\mathbb{M})\xrightarrow{\dev\curl}\\ 
&\Sigma_{k-1,h}^{\rm tn}
 \xrightarrow{(\curl\div)_h} \mathring{\mathbb V}_{(k,\ell), h}^{\curl} \xrightarrow{\div_h} \mathring{\mathbb V}_{\ell+1, h}^{\grad} \rightarrow 0,
\end{aligned}
\end{equation}
where $(\grad, \mskw\boldsymbol{x})\begin{pmatrix}
\boldsymbol v\\
c
\end{pmatrix} = \grad\boldsymbol v + c\mskw\boldsymbol{x}$, and $\ell = k-1$ or $\ell = k$ is introduced to distinguish the first and second kind of N\'ed\'elec element. The lowest order, i.e. $k=1, \ell = 0$, of the last three elements are illustrated in Fig. \ref{fig:lowestorder}.

\begin{figure}[htbp]
\begin{center}
\includegraphics[width=8.5cm]{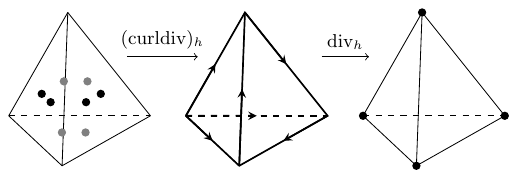}
\caption{The simplest elements $\Sigma_{0,h}^{\rm tn} - \mathring{\mathbb V}_{(1,0), h}^{\curl} - \mathring{\mathbb V}_{1, h}^{\grad}$: the first is a piecewise constant traceless matrix with tangential-normal continuity, the second is the lowest order edge element, and the third is the linear Lagrange element.}
\label{fig:lowestorder}
\end{center}
\end{figure}


The finite element complex \eqref{intro:curldivfecomplex} is a discretization of the distributional $\curl\div$ complex:
\begin{equation*}
\begin{aligned}
\mathbb{R}^{3} \times\{0\} \rightarrow &H^{1}(\Omega ; \mathbb{R}^{3}) \times \mathbb{R} \xrightarrow{(\grad, \mskw\boldsymbol{x})} H(\operatorname{curl}, \Omega ; \mathbb{M})\xrightarrow{\mathrm { dev~curl }}\\ 
&H^{-1}(\curl\div, \Omega ; \mathbb{T})
 \xrightarrow{\curl\div} H^{-1}(\operatorname{div}, \Omega) \xrightarrow{\div} H^{-1}(\Omega) \rightarrow 0,
\end{aligned}
\end{equation*}
where $H^{-1}(\Omega) := (H_0^1(\Omega))'$, and Sobolev spaces of negative order
\begin{equation*}
\begin{aligned}
H^{-1}(\div,\Omega) &:= \left\{\boldsymbol u \in H^{-1}(\Omega;\mathbb R^3): \div \boldsymbol u\in H^{-1}(\Omega)\right\},\\
H^{-1}(\curl\div,\Omega; \mathbb T) &:= \left\{\boldsymbol\tau \in L^2(\Omega; \mathbb T): \curl \div \boldsymbol\tau\in H^{-1}(\div,\Omega)\right\}.
\end{aligned}
\end{equation*}

As an application, we consider the fourth-order curl problem $-\curl\Delta\curl \boldsymbol{u}=\boldsymbol{f}, \div \bs u = 0$ with boundary conditions $\boldsymbol{u} \times \boldsymbol{n}=\operatorname{curl} \boldsymbol{u} =0$ on $\partial \Omega$. 
The distributional mixed finite element method is to find $\boldsymbol{\sigma}_h\in {\Sigma}^{\rm tn}_{k-1,h}$, $\boldsymbol{u}_{h}\in\mathring{\mathbb V}_{(k,\ell), h}^{\curl}$ and  $\phi_h\in\mathring{\mathbb V}_{\ell+1, h}^{\grad}$ such that
\begin{subequations}
\begin{align}
\label{intro:distribumfem1}	(\boldsymbol{\sigma}_h,\boldsymbol{\tau}_h)+b_h(\boldsymbol{\tau}_h,\psi_h;\boldsymbol{u}_h)&=0,  & & \boldsymbol{\tau}_h \in {\Sigma}^{\rm tn}_{k-1,h}, \psi_h\in\mathring{\mathbb V}_{\ell + 1, h}^{\grad}, \\
\label{intro:distribumfem2}	b_h(\boldsymbol{\sigma}_h,\phi_h;\boldsymbol{v}_h) &=-(\boldsymbol{f}, \boldsymbol{v}_h), & & \boldsymbol{v}_h \in \mathring{\mathbb V}_{(k,\ell), h}^{\curl},
\end{align}
\end{subequations}
where $b_h(\boldsymbol{\tau},\psi;\boldsymbol{v}):=((\curl\div)_h\boldsymbol{\tau}, \boldsymbol{v}) +(\grad\psi, \boldsymbol{v})$ and $(\curl\div)_h$ is a discretization of distributional $\curl\div$ operator, cf. \eqref{intro:curldivh}.

We prove two discrete inf-sup conditions and thus obtain the well-posedness of \eqref{intro:distribumfem1}-\eqref{intro:distribumfem2} and optimal order convergence
\begin{align*}
\|\boldsymbol{\sigma}-\boldsymbol{\sigma}_h\|_{0,h}+\|I_h^{\curl}\boldsymbol{u}-\boldsymbol{u}_h\|_{H((\grad\curl)_h)}&\lesssim h^k(|\boldsymbol{\sigma}|_k+|\boldsymbol{u}|_k), \\
\|\boldsymbol{u}-\boldsymbol{u}_h\|_{H(\curl)}+h\|\boldsymbol{u}-\boldsymbol{u}_h\|_{H((\grad\curl)_h)}&\lesssim h^k(|\boldsymbol{\sigma}|_k+|\boldsymbol{u}|_k+|\curl\boldsymbol{u}|_k).
\end{align*}
By the duality argument,
the order of $\|\curl(I_h^{\curl}\boldsymbol{u}-\boldsymbol{u}_h)\|$ can be improved to $h^{k+1}$ on convex domains. Both $\|I_h^{\curl}\boldsymbol{u}-\boldsymbol{u}_h\|_{H((\grad\curl)_h)}$ and $\|\curl(I_h^{\curl}\boldsymbol{u}-\boldsymbol{u}_h)\|$ are superconvergent.
Post-processing can be applied to improve the approximation to $\boldsymbol{u}$.

Furthermore, we apply hybridization techniques to \eqref{intro:distribumfem1}-\eqref{intro:distribumfem2}, leading to a stabilization-free weak Galerkin method and extending to the $H(\grad\curl)$ nonconforming finite elements introduced in~\cite{Huang2020,ZhengHuXu2011} for solving the quad-curl problem. Equivalently, we identify the complex that accommodates these nonconforming finite elements and generalize them to arbitrary orders.

For other discretization of the quad-curl problem, we refer to the macro finite element method in~\cite{HuZhangZhang2022}, nonconforming finite element methods in~\cite{HuangZhang2024,ZhangZhang2023,ZhangZhangZhang2023}, mixed finite element methods in~\cite{Sun2016,ChenLiQiuWang2021},
decoupled finite element methods in~\cite{CaoChenHuang2022,Zhang2018a,BrennerSunSung2017}, and references cited therein.

%

The rest of this paper is organized as follows. Section~\ref{sec:distribcurldivcomplex} focuses on the distributional $\curl\div$ complex.
A distributional finite element $\curl\div$ complex is constructed in Section~\ref{sec:distribfecurldivcomplex}, and applied to solve the quad-curl problem in Section~\ref{sec:mfem}.
The hybridization of the distributional mixed finite element method and the equivalence to other methods are presented in Section~\ref{sec:hybridization}. 

\section{Distributional $\curl\div$ complex}\label{sec:distribcurldivcomplex}
In this section, we present the distributional $\curl\div$ complex and introduce the weak differential operator $(\curl\div)_w$ which can be defined on the tangential-normal continuous matrix functions. 

\subsection{Notation}
Let $K\subset\mathbb R^3$ be a non-degenerated $3$-dimensional polyhedron. Denote by $\mathcal{F}(K)$ the set of all $2$-dimensional faces of $K$. For $F\in \mathcal{F}(K)$, denote by $\mathcal{E}(F)$ the set of all edges of $F$. Choose a normal vector $\bs n_F$ and two
mutually perpendicular unit tangential vectors $\boldsymbol t_{F,1}$ and $\boldsymbol t_{F,2}$, which will be abbreviated as $\boldsymbol t_{1}$ and $\boldsymbol t_{2}$ for simplicity.
Let $\boldsymbol{n}_K$ be the unit outward normal vector to $\partial K$, which will be abbreviated as $\boldsymbol{n}$. 
For $F\in \mathcal{F}(K)$ and $e\in\mathcal{E}(F)$, denote by $\boldsymbol{n}_{F,e}$ the unit vector being parallel to $F$ and outward normal to $\partial F$. 
Set $\boldsymbol{t}_{F,e}:=\boldsymbol{n}_{K}\times\boldsymbol{n}_{F,e}.$

Given a face $F\in \mathcal F(K)$, and a vector $\boldsymbol v\in \mathbb R^3$, define 
\begin{equation*}
\Pi_F\boldsymbol v= (\boldsymbol n\times \boldsymbol v)\times \boldsymbol n = (\boldsymbol I - \boldsymbol n\boldsymbol n^{\intercal})\boldsymbol v
\end{equation*}
as the projection of $\boldsymbol v$ onto the face $F$ which is called the tangential component of $\boldsymbol  v$. The vector $\boldsymbol  n\times \boldsymbol  v = (\bs n\times \Pi_F)\bs v$ is called the tangential trace of $\boldsymbol  v$, which is a rotation of $\Pi_F \boldsymbol  v$ on $F$ ($90^{\circ}$ counter-clockwise with respect to $\boldsymbol  n$).

Define the surface gradient operator as $\nabla_F: = \Pi_F \nabla$. For a scalar function \(v\), define the surface \(\curl\):
\begin{align*}
\curl_F v = \boldsymbol{n} \times \nabla v = \boldsymbol{n} \times \nabla_F v.
\end{align*}
For a vector function \(\boldsymbol{v}\), the surface rot operator is defined as:
\begin{equation*}
{\rm rot}_F \boldsymbol{v} :=  (\boldsymbol{n} \times \nabla) \cdot \boldsymbol{v} = (\boldsymbol{n} \times \nabla_F) \cdot \Pi_F\boldsymbol{v} = \boldsymbol{n} \cdot (\curl\boldsymbol{v}),
\end{equation*}
which represents the normal component of \(\curl\boldsymbol{v}\).


Denote the space of all $3\times 3$ matrices by $\mathbb{M}$, and all trace-free/traceless $3\times 3$ matrices by $\mathbb{T}$. Define the deviation $\dev\boldsymbol{\tau} = \boldsymbol{\tau}-\frac{1}{3}(\tr\boldsymbol{\tau})\boldsymbol I$. Obviously for a scalar function $u$, $\dev (u\bs I) = 0$.
For a vector $\boldsymbol{w}=(\omega_1, \omega_2, \omega_3)^{\intercal}\in\mathbb R^3$, let
\begin{equation*}\mskw\boldsymbol{w}:=\begin{pmatrix}
0 & -\omega_3 & \omega_2 \\
\omega_3 & 0 & -\omega_1 \\
-\omega_2 & \omega_1 & 0
\end{pmatrix}.
\end{equation*}
For a tensor-valued function $\boldsymbol{\tau}$, $\div\boldsymbol{\tau}$ and $\curl\boldsymbol{\tau}$ mean operators $\div$ and $\curl$ are applied row-wisely to $\boldsymbol{\tau}$. By direct calculation, we have the identities:
\begin{equation}\label{eq:divmskw}
\div \mskw \bs v = - \curl \bs v, \quad (\mskw \bs v)\bs n = \bs v \times \bs n.
\end{equation}

We use $\{\mathcal{T}_h\}_{h>0}$ to denote a shape regular family of simplicial meshes of $\Omega$ with mesh size $h=\max_{T\in\mathcal{T}_h}h_T$ and $h_T$ being the diameter of $T$.
Let $\mathcal{F}_{h}$, $\mathring{\mathcal{F}}_{h}$, $\mathcal{E}_{h}$, $\mathring{\mathcal{E}}_{h}$, $\mathcal{V}_{h}$ and $\mathring{\mathcal{V}}_{h}$ be the set of all faces, interior faces, edges, interior edges, vertices and interior vertices of $\mathcal T_h$, respectively. Let $T\in\mathcal T_h$ be a tetrahedron with four vertices $\texttt{v}_0, \ldots, \texttt{v}_3$.
Denote by $\lambda_i$ the $i$th barycentric coordinate with respect to the simplex $T$ for $i=0,\ldots, 3$. Set $\boldsymbol{t}_{ij}:=\texttt{v}_j-\texttt{v}_i$ as the edge vector from $\texttt{v}_i$ to $\texttt{v}_j$.

Given a nonnegative integer $k$, let $\mathbb P_k(T)$ stand for the set of all polynomials in $T$ with the total degree no more than $k$, and let $\mathbb P_k(T;\mathbb X)$ denote the tensor or vector version with $\mathbb X=\mathbb R^3$, $\mathbb M$ and $\mathbb T$. When $k<0$, set $\mathbb P_k(T) := \{0\}.$

Given a bounded domain $D\subset\mathbb{R}^{3}$ and a real number $s$, let $H^s(D)$ be the usual Sobolev space of functions
over $D$, whose norm and semi-norm are denoted by
$\Vert\cdot\Vert_{s,D}$ and $|\cdot|_{s,D}$ respectively. 
Let $(\cdot, \cdot)_D$ be the standard inner product on $L^2(D)$. If $D$ is $\Omega$, we abbreviate
$\Vert\cdot\Vert_{s,D}$, $|\cdot|_{s,D}$ and $(\cdot, \cdot)_D$ by $\Vert\cdot\Vert_{s}$, $|\cdot|_{s}$ and $(\cdot, \cdot)$, respectively. We also abbreviate $\Vert\cdot\Vert_{0,D}$ and $\Vert\cdot\Vert_{0}$ by $\Vert\cdot\Vert_{D}$ and $\Vert\cdot\Vert$, respectively.
The duality pair will be denoted by $\langle \cdot, \cdot \rangle$.

Introduce the following Sobolev spaces 
\begin{align*}
H(\curl,D) &:= \{\bs u \in  L^2(D;\mathbb R^3): \curl \bs u \in L^2(D;\mathbb R^3) \},\\
H(\div,D) &:= \{\bs u \in  L^2(D;\mathbb R^3): \div \bs u\in L^2(D) \}, \\
H(\grad\curl,D) &:= \{\bs u \in  L^2(D;\mathbb R^3): \curl \bs u \in H^1(D;\mathbb R^3) \},\\
H(\operatorname{curl} \div, D; \mathbb{T}) &:=\left\{\boldsymbol{\tau} \in L^{2}(D; \mathbb{T}): \operatorname{curl} \div\boldsymbol{\tau} \in L^2(D;\mathbb R^3)\right\},
\end{align*}
where $H^s(D;\mathbb X):=H^s(D)\otimes\mathbb X$.
Define piecewise smooth function space, for $s>0$, 
\begin{equation*}
H^s(\mathcal T_h) := \{ v\in L^2(\Omega): v\mid_{T}\in H^s(T) \text{ for all } T\in \mathcal T_h \},
\end{equation*}
and $H^s(\mathcal T_h;\mathbb X)$ its tensor or vector version with $\mathbb X=\mathbb R^3$, $\mathbb M$ and $\mathbb T$. Let $\grad_{\mathcal T_h}$ and $\curl_{\mathcal T_h}$ be the elementwise version of $\grad$ and $\curl$ associated with $\mathcal T_h$, respectively.

\subsection{The $\curl\div$ complexes}

The $\curl\div$ complex in three dimensions reads as~\cite[(47)]{ArnoldHu2021}
\begin{equation}\label{curldivcomplex}
\begin{aligned}
\mathbb{R}^{3} \times\{0\} \rightarrow &H^{1}(\Omega; \mathbb{R}^{3}) \times \mathbb{R} \xrightarrow{(\grad\!, \,\mskw\boldsymbol{x})} H(\operatorname{curl}, \Omega; \mathbb{M})\xrightarrow{\text { dev curl }}\\ 
&H(\curl\div, \Omega ; \mathbb{T})
 \xrightarrow{\curl\div} H(\operatorname{div}, \Omega) \xrightarrow{\div} L^{2}(\Omega) \rightarrow 0.
\end{aligned}
\end{equation}
When $\Omega$ is topologically trivial, i.e., all co-homology group of $\Omega$ is trivial, then  \eqref{curldivcomplex} is exact. The smoothness of the potential can be further improved to be in $H^1$. 

It is difficult to construct $H(\operatorname{curl} \div, \Omega ; \mathbb{T})$-conforming finite element with lower order degree of polynomials. To relax the smoothness, we are going to present a distributional $\curl\div$ complex with negative Sobolev spaces involved. 

Define 
\begin{equation*}
H^{-1}(\div,\Omega) = \{ \bs v\in H^{-1}(\Omega; \mathbb R^3): \div \bs v \in H^{-1}(\Omega)\}. 
\end{equation*}
In~\cite{ChenHuang2018}, we have shown that
$(H_{0}(\curl,\Omega))'=H^{-1}(\div,\Omega).$

Define 
\begin{equation*}
H^{-1}(\operatorname{curl} \div, \Omega ; \mathbb{T}):=\left\{\boldsymbol{\tau} \in L^{2}(\Omega ; \mathbb{T}): \operatorname{curl} \div\boldsymbol{\tau} \in H^{-1}(\div,\Omega)\right\}
\end{equation*}
with squared norm \begin{equation*}\|\boldsymbol{\tau}\|_{H(\operatorname{curl}\div)}^{2}:=\|\boldsymbol{\tau}\|^{2}+\|\curl\div \boldsymbol{\tau}\|_{H^{-1}(\div)}^{2}=\|\boldsymbol{\tau}\|^{2}+\|\curl\div \boldsymbol{\tau}\|_{-1}^{2}.\end{equation*}


\begin{lemma}
The distributional $\curl\div$ complex in three dimensions is 
\begin{equation}\label{distribcurldivcomplex}
\begin{aligned}
\mathbb{R}^{3} \times\{0\} \rightarrow &H^{1}(\Omega ; \mathbb{R}^{3}) \times \mathbb{R} \xrightarrow{(\grad, \mskw\boldsymbol{x})} H(\operatorname{curl}, \Omega ; \mathbb{M})\xrightarrow{\mathrm { dev~curl }}\\ 
&H^{-1}(\curl\div, \Omega ; \mathbb{T})
 \xrightarrow{\curl\div} H^{-1}(\operatorname{div}, \Omega) \xrightarrow{\div} H^{-1}(\Omega) \rightarrow 0.
\end{aligned}
\end{equation}
When $\Omega\subset\mathbb R^3$ is a bounded and topologically trivial Lipschitz domain, \eqref{distribcurldivcomplex} is exact.
\end{lemma}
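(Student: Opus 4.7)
The plan is to verify the cocycle property and then establish exactness at each position, with the distributional Poincar\'e lemma in $H^{-1}$ doing the heavy lifting. The cocycle property follows from three elementary identities. The row-wise identity $\curl\grad=0$ combined with the direct calculation $\curl(\mskw\bs x)=2\bs I$ (and $\dev(2\bs I)=0$) gives $\dev\curl\circ(\grad,\mskw\bs x)=0$. The row-wise identity $\div\curl=0$ yields $\div(\dev\curl\bs\sigma)=-\tfrac{1}{3}\grad\tr(\curl\bs\sigma)$, a gradient whose curl vanishes, so $\curl\div\circ\dev\curl=0$. Finally $\div\circ\curl\div=0$ by another application of $\div\curl=0$.

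For exactness I take $\Omega$ bounded, Lipschitz, and topologically trivial. At $H^1(\Omega;\mathbb R^3)\times\mathbb R$: if $\grad\bs v+c\,\mskw\bs x=0$, the symmetric part forces $\sym\grad\bs v=0$, so $\bs v$ is an infinitesimal rigid motion $\bs a+\bs b\times\bs x$; comparing the antisymmetric part with the $x$-linear tensor $c\,\mskw\bs x$ then forces $c=0$ and $\bs b=0$, so the kernel is $\mathbb R^3\times\{0\}$. At $H(\curl,\Omega;\mathbb M)$: if $\dev\curl\bs\sigma=0$ then $\curl\bs\sigma=u\bs I$ with $u\in L^2(\Omega)$; $\grad u=\div\curl\bs\sigma=0$ makes $u\equiv c$ constant, and since $\curl(\tfrac{c}{2}\mskw\bs x)=c\bs I$ the tensor $\bs\sigma-\tfrac{c}{2}\mskw\bs x$ is row-wise curl-free. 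The classical $L^2$ de Rham theorem on the simply connected domain, applied row-wise, delivers $\bs v\in H^1(\Omega;\mathbb R^3)$ with $\bs\sigma=\grad\bs v+\tfrac{c}{2}\mskw\bs x$.

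The key step is exactness at $H^{-1}(\curl\div,\Omega;\mathbb T)$. Given $\bs\tau\in L^2(\Omega;\mathbb T)$ with $\curl\div\bs\tau=0$ distributionally, the vector $\bs w:=\div\bs\tau\in H^{-1}(\Omega;\mathbb R^3)$ is distributionally curl-free. The distributional Poincar\'e lemma in $H^{-1}$ on a simply connected Lipschitz domain produces $\phi\in L^2(\Omega)$ with $\bs w=\grad\phi$. Then $\bs\eta:=\bs\tau-\phi\bs I\in L^2(\Omega;\mathbb M)$ satisfies $\div\bs\eta=0$, so the classical $L^2$ de Rham theorem applied row-wise yields $\bs\sigma\in H(\curl,\Omega;\mathbb M)$ with $\curl\bs\sigma=\bs\eta$. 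Since $\bs\tau$ is traceless and $\dev(\phi\bs I)=0$, I conclude $\dev\curl\bs\sigma=\dev\bs\eta=\bs\tau$.

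For the tail: at $H^{-1}(\div,\Omega)$, given $\bs v\in H^{-1}(\div)$ with $\div\bs v=0$, the distributional Poincar\'e lemma provides $\bs w\in L^2(\Omega;\mathbb R^3)$ with $\curl\bs w=\bs v$; I then produce $\bs\tau\in L^2(\Omega;\mathbb T)$ with $\div\bs\tau=\bs w$ via the Stokes-like ansatz $\bs\tau=\grad\bs g-\tfrac{1}{3}(\div\bs g)\bs I$, where $\bs g\in H_0^1(\Omega;\mathbb R^3)$ solves the Lam\'e-type elliptic system $\Delta\bs g-\tfrac{1}{3}\grad\div\bs g=\bs w$ (whose Fourier symbol $-|\xi|^2\bs I+\tfrac{1}{3}\xi\xi^\intercal$ is negative definite, with eigenvalues $-\tfrac{2}{3}|\xi|^2$ and $-|\xi|^2$). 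Surjectivity of $\div:H^{-1}(\div)\to H^{-1}(\Omega)$ is immediate: for $f\in H^{-1}(\Omega)$ choose $q\in H_0^1(\Omega)$ with $-\Delta q=f$ and set $\bs v=-\grad q$. The main obstacle is the $H^{-1}$ distributional Poincar\'e lemma in both forms (curl-free equals gradient of an $L^2$ scalar; divergence-free equals curl of an $L^2$ vector), which one derives by dualizing the classical $H_0$ de Rham complex on simply connected Lipschitz $\Omega$ and invoking the identification $(H_0(\curl,\Omega))'=H^{-1}(\div,\Omega)$ recalled above from~\cite{ChenHuang2018}.
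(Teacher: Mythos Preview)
Your proof is correct and follows essentially the same route as the paper's: at each node you invoke the appropriate de Rham/Poincar\'e lemma (in $L^2$ or $H^{-1}$, citing \cite{CostabelMcIntosh2010} or the dualization argument), subtract a scalar multiple of $\bs I$ or $\mskw\bs x$ to reduce to the curl-free/div-free case, and then take $\dev$ to land in the traceless space. The only notable difference is at $H^{-1}(\div,\Omega)\cap\ker(\div)$: the paper pulls $\bs\tau\in H^1(\Omega;\mathbb M)$ directly from the regular potential of Costabel--McIntosh and then applies $\dev$, whereas you first lift to $\bs w\in L^2$ and then solve the Lam\'e-type system $\Delta\bs g-\tfrac{1}{3}\grad\div\bs g=\bs w$ to manufacture a traceless $\bs\tau$ explicitly---a slightly more constructive variant of the same idea.
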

\begin{proof}
Apparently \eqref{distribcurldivcomplex} is a complex. The surjection $\div H^{-1}(\operatorname{div}, \Omega)=H^{-1}(\Omega)$ follows from $\div L^{2}(\Omega;\mathbb R^3)=H^{-1}(\Omega)$ and $\div L^{2}(\Omega;\mathbb R^3)\subseteq \div H^{-1}(\operatorname{div}, \Omega)$. We then verify its exactness. 

\smallskip
\step 1 \; $\curl\div H^{-1}(\curl\div, \Omega ; \mathbb{T})= H^{-1}(\operatorname{div}, \Omega)\cap\ker(\div)$.

For $\boldsymbol{v}\in H^{-1}(\operatorname{div}, \Omega)\cap\ker(\div)$, by the exactness of the de Rham complex~\cite{CostabelMcIntosh2010}, there exists $\boldsymbol{\tau}\in H^1(\Omega;\mathbb M)$ such that $\boldsymbol{v}=\curl\div\boldsymbol{\tau}$. Notice that $\curl \div (p\bs I) = \curl \grad p = 0$. Then $\boldsymbol{v}=\curl\div(\dev\boldsymbol{\tau})\in\curl\div H^{-1}(\curl\div, \Omega ; \mathbb{T})$.

\smallskip
\step 2 \; $\dev\curl H(\operatorname{curl}, \Omega ; \mathbb{M}) = H^{-1}(\curl\div, \Omega ; \mathbb{T})\cap\ker(\curl\div)$.

For $\boldsymbol{\tau}\in H^{-1}(\curl\div, \Omega ; \mathbb{T})\cap\ker(\curl\div)$, by the de Rham complex, there exists a function $u\in L^2(\Omega)$ s.t. $\div\boldsymbol{\tau}=\grad u$. Then $\div(\boldsymbol{\tau}-u\boldsymbol{I})=0$, which means $\boldsymbol{\tau}=u \boldsymbol{I}+\curl\boldsymbol{\sigma}$ with $\boldsymbol{\sigma}\in H^1(\Omega ; \mathbb{M})$. By the traceless of $\boldsymbol{\tau}$, we get $\boldsymbol{\tau}=\dev\curl\boldsymbol{\sigma}\in \dev\curl H^1(\Omega ; \mathbb{M})\subseteq\dev\curl H(\operatorname{curl}, \Omega ; \mathbb{M})$.

\smallskip
\step 3 \; $ \grad H^{1}(\Omega ; \mathbb{R}^{3}) \oplus \textrm{span}\{\mskw\boldsymbol{x}\} = H(\operatorname{curl}, \Omega ; \mathbb{M})\cap\ker(\dev\curl)$.

Since $\curl(\mskw\boldsymbol{x})=2\boldsymbol{I}$, we have $\grad H^{1}(\Omega ; \mathbb{R}^{3}) \cap \textrm{span}\{\mskw\boldsymbol{x}\}=\{0\}$.
For $\boldsymbol{\tau}\in H(\operatorname{curl}, \Omega ; \mathbb{M})\cap\ker(\dev\curl)$, we have $\curl\boldsymbol{\tau}=\frac{1}{3}\tr(\curl\boldsymbol{\tau})\boldsymbol{I}$. Apply $\div$ on both sides to get $\grad(\tr(\curl\boldsymbol{\tau}))=0$. Then $\tr(\curl\boldsymbol{\tau})$ is constant, and
$\curl\boldsymbol{\tau}=2c\boldsymbol{I}$ with $c\in\mathbb R$. This implies $\curl(\boldsymbol{\tau} -c\mskw\boldsymbol{x})=0$. Therefore $\boldsymbol{\tau}\in \grad H^{1}(\Omega ; \mathbb{R}^{3}) \oplus \textrm{span}\{\mskw\boldsymbol{x}\}$.
\end{proof}

Next we use the framework developed in~\cite{ChenHuang2018} to present a Helmholtz decomposition of $H^{-1}(\curl\div,\Omega;\mathbb T)$. Denote by \begin{equation*}K^c = H_0(\curl,\Omega)\cap \ker(\div) = \{\bs v\in H_0(\curl,\Omega): \bs v\perp \grad H_0^1(\Omega)\}.\end{equation*}
Then $\curl\curl: K^c\to H^{-1}(\div,\Omega)\cap\ker(\div)$ is isomorphic. Indeed, by the Helmholtz decomposition $L^2(\Omega;\mathbb R^3)=\curl K^c+\nabla H^1(\Omega)$~\cite{ArnoldFalkWinther2006}, we have 
\begin{equation*}
\curl\curl K^c=\curl L^2(\Omega;\mathbb R^3)= H^{-1}(\div,\Omega)\cap\ker(\div).
\end{equation*}

\begin{lemma}
It holds the Helmholtz decomposition
\begin{equation*}
H^{-1}(\curl\div,\Omega;\mathbb T)=\dev\curl H(\curl,\Omega;\mathbb M)\oplus \mskw K^c.
\end{equation*}
\end{lemma}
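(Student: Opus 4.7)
The plan is to exploit the short exact sequence behavior already established: by Step 2 of the proof of the preceding lemma, the kernel of $\curl\div$ on $H^{-1}(\curl\div,\Omega;\mathbb T)$ coincides with $\dev\curl H(\curl,\Omega;\mathbb M)$, so it suffices to peel off a $\mskw$ part that carries precisely the $\curl\div$-mass of $\boldsymbol\tau$. The key identity, obtained from \eqref{eq:divmskw} together with $\curl\div(p\boldsymbol I)=0$, is
\begin{equation*}
\curl\div(\mskw\boldsymbol w) = -\curl\curl\boldsymbol w, \qquad \boldsymbol w\in H_0(\curl,\Omega),
\end{equation*}
which shows that $\mskw K^c\subset H^{-1}(\curl\div,\Omega;\mathbb T)$ (since $\curl\boldsymbol w\in L^2$ implies $\curl\curl\boldsymbol w\in H^{-1}(\div,\Omega)\cap\ker(\div)$) and identifies the action of $\curl\div$ on the second summand with $-\curl\curl$ on $K^c$.

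For existence, given $\boldsymbol\tau\in H^{-1}(\curl\div,\Omega;\mathbb T)$, note that $\curl\div\boldsymbol\tau\in H^{-1}(\div,\Omega)$ automatically lies in $\ker(\div)$ because $\div\curl=0$. The isomorphism $\curl\curl:K^c\to H^{-1}(\div,\Omega)\cap\ker(\div)$ recalled just before the lemma then supplies a unique $\boldsymbol w\in K^c$ with $-\curl\curl\boldsymbol w=\curl\div\boldsymbol\tau$, i.e.\ $\curl\div(\boldsymbol\tau-\mskw\boldsymbol w)=0$. Since $\mskw\boldsymbol w\in \mathbb T$, the residue $\boldsymbol\tau-\mskw\boldsymbol w$ belongs to $H^{-1}(\curl\div,\Omega;\mathbb T)\cap\ker(\curl\div)$, which by Step 2 of the previous lemma equals $\dev\curl H(\curl,\Omega;\mathbb M)$. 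This yields $\boldsymbol\tau=\dev\curl\boldsymbol\sigma+\mskw\boldsymbol w$ with $\boldsymbol\sigma\in H(\curl,\Omega;\mathbb M)$.

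For the direct-sum property, suppose $\dev\curl\boldsymbol\sigma=\mskw\boldsymbol w$ with $\boldsymbol\sigma\in H(\curl,\Omega;\mathbb M)$ and $\boldsymbol w\in K^c$. Applying $\curl\div$ to the left side gives $\curl\div\curl\boldsymbol\sigma-\tfrac13\curl\grad\tr(\curl\boldsymbol\sigma)=0$, since $\div\curl=0$ row-wise and $\curl\grad=0$. Applying $\curl\div$ to the right side yields $-\curl\curl\boldsymbol w$. Hence $\curl\curl\boldsymbol w=0$, and injectivity of the isomorphism forces $\boldsymbol w=0$, whence $\dev\curl\boldsymbol\sigma=0$ as well. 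The only point needing care is that $\mskw\boldsymbol w$ is indeed an admissible element of $H^{-1}(\curl\div,\Omega;\mathbb T)$ and that $\curl\div\boldsymbol\tau$ lands in the correct target space for the isomorphism; both reduce to the identity $\curl\div(\mskw\boldsymbol w)=-\curl\curl\boldsymbol w$ and the tautology $\div\curl=0$, so the only substantive input beyond the preceding lemma is the cited Helmholtz isomorphism on $K^c$.
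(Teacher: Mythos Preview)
Your argument is correct. Both proofs rest on the same ingredients: the exactness at $H^{-1}(\curl\div,\Omega;\mathbb T)$ from the preceding lemma, the isomorphism $\curl\curl:K^c\to H^{-1}(\div,\Omega)\cap\ker(\div)$, and the identity $\curl\div\circ\mskw=-\curl\curl$ derived from \eqref{eq:divmskw}. The paper packages these into a commutative diagram and invokes the abstract Helmholtz-decomposition framework of \cite{ChenHuang2018}, whereas you carry out the decomposition by hand: solve $-\curl\curl\boldsymbol w=\curl\div\boldsymbol\tau$ for $\boldsymbol w\in K^c$, observe $\boldsymbol\tau-\mskw\boldsymbol w$ lies in the kernel, and verify disjointness by applying $\curl\div$ and using injectivity of $\curl\curl$ on $K^c$. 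Your route is precisely what that framework does once specialized to this diagram, so the two proofs are the same in substance; yours is simply self-contained and does not require the reader to consult the external reference.
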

\begin{proof}
With complex \eqref{distribcurldivcomplex} and identity \eqref{eq:divmskw}, we build up the commutative diagram
\begin{equation*}
\begin{array}{c}
\xymatrix@R-=1.6pc{
H(\curl,\Omega;\mathbb M) \ar[r]^-{\dev\curl} & H^{-1}(\curl\div,\Omega;\mathbb T)\ar[r]^-{\curl\div} & H^{-1}(\div,\Omega)\cap\ker(\div)\to0  \\
& &K^c \ar[u]_{\curl\curl}\ar[lu]^{-\mskw}.
} 
\end{array}
\end{equation*}
Apply the framework in~\cite{ChenHuang2018} to get the required Helmholtz decomposition.
\end{proof}

We introduce the tensor space with tangential-normal continuity
\begin{align*}
\Sigma^{\rm tn}:=\{\boldsymbol{\tau}  \in H^{1}(\mathcal T_h ; \mathbb{T}) :  \left.\llbracket\boldsymbol{n}\times\boldsymbol{\tau}\boldsymbol{n} \rrbracket\right|_{F}=0 \text { for each } F \in \mathring{\mathcal{F}}_{h}\},
\end{align*}
where $\llbracket\boldsymbol{n}\times\boldsymbol{\tau}\boldsymbol{n} \rrbracket$ is the jump of $\boldsymbol{n}\times\boldsymbol{\tau}\boldsymbol{n}$ across $F$.
Let space 
$
V_0^{\curl} := H_0(\curl,\Omega)\cap H^1(\curl,\mathcal T_h), 
$
where $H^1(\curl,\mathcal T_h):=\{\boldsymbol{v}\in H^1(\mathcal T_h;\mathbb R^3): \curl\boldsymbol{v}\in H^1(\mathcal T_h;\mathbb R^3)\}$.
We define a weak operator
$
(\curl\div)_w: \Sigma^{\rm tn} \to (V_0^{\curl})' \subset (C_0^{\infty}(\Omega; \mathbb R^3))'
$
by
\begin{equation}\label{eq:weakcurldiv}
\langle (\curl\div)_w \boldsymbol{\tau}, \boldsymbol{v}\rangle :=\sum_{T\in \mathcal{T}_{h}}(\operatorname{div} \boldsymbol{\tau}, \curl\boldsymbol{v})_{T}-\sum_{F\in \mathring{\mathcal{F}}_{h}}(\llbracket \boldsymbol{n}^{\intercal}\boldsymbol{\tau}\boldsymbol{n} \rrbracket, \boldsymbol{n}_F\cdot\curl\boldsymbol{v})_{F}.
\end{equation}
Notice that only interior faces are included in the second term as $\bs n_F\cdot \curl \bs v = \rot_F\bs v = 0$ for $\bs v\in H_0(\curl,\Omega)$. 

\begin{lemma}\label{lem:curldivw}
 For $\bs \tau \in \Sigma^{\rm tn}$, the following identity holds in the distribution sense
\begin{equation*}
(\curl\div)_w \boldsymbol{\tau}=\curl\div \boldsymbol{\tau}.
\end{equation*}
\end{lemma}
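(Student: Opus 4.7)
The plan is to test $(\curl\div)_w\bs\tau$ against an arbitrary smooth compactly supported $\bs v\in C_0^\infty(\Omega;\mathbb R^3)$ and show the result coincides with the distributional action $\langle \curl\div\bs\tau,\bs v\rangle = -(\bs\tau,\grad\curl\bs v)$. The core tool is elementwise integration by parts, combined with a careful analysis of the interior face terms using the tangential–normal continuity built into $\Sigma^{\rm tn}$.

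First I would fix $\bs v\in C_0^\infty(\Omega;\mathbb R^3)$ and, on each $T\in\mathcal T_h$, integrate by parts the row-wise divergence to obtain
\begin{equation*}
(\div\bs\tau,\curl\bs v)_T = -(\bs\tau,\grad\curl\bs v)_T + (\bs\tau\bs n,\curl\bs v)_{\partial T}.
\end{equation*}
Summing over $T\in\mathcal T_h$ collects the volume contributions into $-(\bs\tau,\grad\curl\bs v)$, while the boundary contributions become a sum of face integrals. Because $\bs v$ has compact support in $\Omega$, the contributions from faces $F\subset\partial\Omega$ vanish, and we are left with a sum over $\mathring{\mathcal F}_h$ of jumps $(\llbracket\bs\tau\bs n\rrbracket,\curl\bs v)_F$ (with $\bs n=\bs n_F$ the fixed face normal).

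Next I would decompose $\bs\tau\bs n$ on each interior face $F$ into its normal component $(\bs n^\intercal\bs\tau\bs n)\bs n_F$ and its tangential projection $\Pi_F(\bs\tau\bs n)$. The key observation is that $\bs n\times(\bs\tau\bs n)$ is a $90^\circ$ rotation of $\Pi_F(\bs\tau\bs n)$ about $\bs n$, so the tangential–normal continuity condition $\llbracket\bs n\times\bs\tau\bs n\rrbracket|_F=0$ built into $\Sigma^{\rm tn}$ is equivalent to $\llbracket\Pi_F(\bs\tau\bs n)\rrbracket|_F=0$. Consequently,
\begin{equation*}
\llbracket\bs\tau\bs n\rrbracket|_F = \llbracket\bs n^\intercal\bs\tau\bs n\rrbracket \bs n_F.
\end{equation*}
Pairing with $\curl\bs v$, which is continuous across $F$ since $\bs v\in C_0^\infty$, leaves only the normal–normal jump paired with $\bs n_F\cdot\curl\bs v$, reproducing the second term of \eqref{eq:weakcurldiv}. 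Rearranging gives exactly $\langle(\curl\div)_w\bs\tau,\bs v\rangle = -(\bs\tau,\grad\curl\bs v) = \langle\curl\div\bs\tau,\bs v\rangle$, which proves the identity in the distribution sense.

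The one delicate point is the equivalence $\llbracket\bs n\times\bs\tau\bs n\rrbracket=0 \iff \llbracket\Pi_F(\bs\tau\bs n)\rrbracket=0$, which is what allows the tangential part of $\bs\tau\bs n$ to drop out and only the scalar normal–normal jump to remain; once this is recorded, the rest is a straightforward bookkeeping of integration by parts.
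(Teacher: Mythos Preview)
Your proposal is correct and follows essentially the same approach as the paper: elementwise integration by parts, splitting the face term $(\bs\tau\bs n,\curl\bs v)_{\partial T}$ into normal and tangential parts, and using the tangential--normal continuity to cancel the tangential contribution so that only the $\llbracket\bs n^{\intercal}\bs\tau\bs n\rrbracket$ jump survives. The only cosmetic difference is that the paper decomposes $\curl\bs v$ into $(\bs n\cdot\curl\bs v)\bs n$ and $\bs n\times\curl\bs v$, whereas you decompose $\bs\tau\bs n$ into $(\bs n^{\intercal}\bs\tau\bs n)\bs n$ and $\Pi_F(\bs\tau\bs n)$; these are equivalent splittings of the same inner product.
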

\begin{proof}
By the definition of the distributional derivative and employing the integration by parts element-wise we get for $\bs v\in C_0^{\infty}(\Omega;\mathbb R^3)$ that
\begin{align*}
\langle\curl\div \boldsymbol{\tau}, \boldsymbol{v}\rangle &:=-(\boldsymbol{\tau}, \grad\curl\boldsymbol{v})=\sum_{T\in \mathcal{T}_{h}}(\operatorname{div} \boldsymbol{\tau}, \curl\boldsymbol{v})_{T}-\sum_{T\in \mathcal{T}_{h}}(\boldsymbol{\tau}\boldsymbol{n}, \curl\boldsymbol{v})_{\partial T} \\
&\;=\sum_{T\in \mathcal{T}_{h}}(\operatorname{div} \boldsymbol{\tau}, \curl\boldsymbol{v})_{T}-\sum_{T\in \mathcal{T}_{h}}(\boldsymbol{n}^{\intercal}\boldsymbol{\tau}\boldsymbol{n}, \boldsymbol{n}\cdot\curl\boldsymbol{v})_{\partial T} \\
&\;\quad-\sum_{T\in \mathcal{T}_{h}}(\boldsymbol{n}\times\boldsymbol{\tau}\boldsymbol{n}, \boldsymbol{n}\times\curl\boldsymbol{v})_{\partial T}.
\end{align*}
As $\boldsymbol{n}\times\boldsymbol{\tau}\boldsymbol{n}$ is continuous and $\boldsymbol{v} \in C_0^{\infty}(\Omega;\mathbb R^3)$, the last term is canceled. Then rearrange the second term face-wisely to derive 
 \begin{equation*}
\langle\curl\div \boldsymbol{\tau}, \boldsymbol{v}\rangle = \langle (\curl\div)_w \boldsymbol{\tau}, \boldsymbol{v}\rangle, \quad \forall~\boldsymbol{v} \in C_0^{\infty}(\Omega;\mathbb R^3).
\end{equation*}
Thus, $(\curl\div)_w \boldsymbol{\tau}=\curl\div \boldsymbol{\tau}$ in the distribution sense.
\end{proof}

Similarly we can define the weak operator
$
(\grad \curl)_w: V_0^{\curl} \to (\Sigma^{\rm tn})'
$
as
\begin{equation*}
\langle (\grad \curl)_w \bs v, \bs \tau \rangle : = \sum_{T\in \mathcal{T}_{h}}(\boldsymbol{\tau}, \grad\curl\boldsymbol{v})_{T} - \sum_{F\in \mathcal{F}_{h}}(\boldsymbol{n}\times\boldsymbol{\tau}\boldsymbol{n}, \llbracket \boldsymbol{n}\times\curl\boldsymbol{v}\rrbracket )_{F}.
\end{equation*}
By definition, we have the duality
\begin{equation}\label{eq:duality}
\langle(\curl\div)_w \boldsymbol{\tau}, \boldsymbol{v}\rangle  = - \langle \bs \tau, (\grad \curl)_w \bs v \rangle, \quad \bs \tau \in \Sigma^{\rm tn}, \bs v\in V_0^{\curl}.
\end{equation}

When $\bs \tau \in H(\curl\div,\Omega; \mathbb T)\cap \Sigma^{\rm tn}$, $\langle(\curl\div)_w \boldsymbol{\tau}, \boldsymbol{v}\rangle = (\curl\div\bs \tau, \bs v)$ and when $\bs v\in H(\grad\curl,\Omega)$, $\langle \bs \tau, (\grad \curl)_w \bs v \rangle = (\bs \tau, \grad \curl \bs v)$. The duality \eqref{eq:duality} strikes a balance of the smoothness of $\bs \tau$ and $\bs v$ so that the second order differential operators can be defined for less smooth functions.

\section{Distributional finite element $\curl\div$ complex}\label{sec:distribfecurldivcomplex}
We shall construct a finite element counterpart of the distributional $\curl\div$ complex~\eqref{distribcurldivcomplex}. 

\subsection{Finite element spaces}
We first recall the tangential-normal continuous finite element for traceless tensors in \cite{GopalakrishnanLedererSchoeberl2020a}.
Take $\mathbb P_{k}(T;\mathbb T)$ as the space of shape functions with $k\geq 0$. The degrees of freedom (DoFs) are given by
\begin{subequations}\label{eq:curldivdof}
 \begin{align}\label{eq:curdivfemdof1}
\int_{F} \boldsymbol{t}_i^{\intercal}\boldsymbol{\tau}\boldsymbol{n}\, q \dd S, \quad & q \in \mathbb P_{k}(F), i = 1,2, F\in\mathcal F(T), \\
\label{eq:curdivfemdof2}
	\int_{T} \boldsymbol{\tau}: \boldsymbol{q} \dx, \quad & \boldsymbol{q} \in \mathbb{P}_{k-1}(T; \mathbb{T}).
\end{align}
\end{subequations}

In order to give a geometric decomposition of space $\mathbb P_{k}(T;\mathbb T)$, we present two intrinsic bases of $\mathbb T$ which are variants of a basis constructed in~\cite{HuLiang2021}. 

\begin{figure}[htbp]
\label{fig:Tbasis}
\subfigure[Basis $\{\dev(\nabla \lambda_{i}\otimes\boldsymbol{t}_{i \ell}), \dev(\nabla \lambda_{j}\otimes\boldsymbol{t}_{j \ell})\}_{\ell= 0}^3
$.]{
\begin{minipage}[t]{0.54\linewidth}
\centering
\includegraphics*[width=5.0cm]{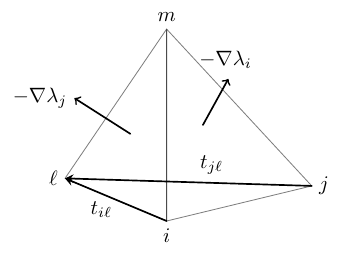}
\end{minipage}}
\subfigure[Basis $\{\boldsymbol{t}_{mi}\otimes\nabla\lambda_{\ell}, \boldsymbol{t}_{mj}\otimes\nabla\lambda_{\ell}\}_{\ell=0}^3$.]
{\begin{minipage}[t]{0.45\linewidth}
\centering
\includegraphics*[width=4.54cm]{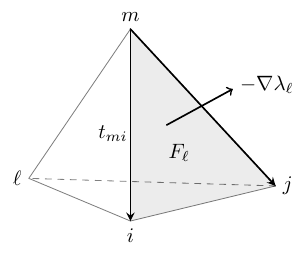}
\end{minipage}}
\caption{Two intrinsic bases of traceless matrix $\mathbb T$.}
\end{figure}

\begin{lemma}\label{lem:Tbasis}
Let $(ij\ell m)$ be a cyclic permutation of $(0123)$. Then the set 
\begin{equation}\label{eq:basis}
\{\dev(\nabla \lambda_{i}\otimes\boldsymbol{t}_{i \ell}), \dev(\nabla \lambda_{j}\otimes\boldsymbol{t}_{j \ell}), \ell= 0,\ldots, 3\}
\end{equation}
is dual to 
\begin{equation}\label{eq:dualbasis}
\{\boldsymbol{t}_{mi}\otimes\nabla\lambda_{\ell}, \boldsymbol{t}_{mj}\otimes\nabla\lambda_{\ell}\}_{\ell=0}^3.
\end{equation} 
Consequently both are bases of $\mathbb T$. 
\end{lemma}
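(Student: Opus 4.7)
The plan is to prove the duality of the two collections under the Frobenius inner product $A:B=\tr(A^\intercal B)$. Since each set contains $8$ elements (two for each value of the running $\ell\in\{0,1,2,3\}$, with $(i,j,m)$ determined by the cyclic permutation that puts $\ell$ in the third slot) and $\dim\mathbb{T}=8$, a non-degenerate pairing will immediately force linear independence of the first family and hence the basis property; the second family then furnishes the dual coordinate functionals, so it is a basis of $\mathbb{T}$ modulo $\spa\{\boldsymbol{I}\}$ (the trace part of any element of the second set contributes nothing when paired against traceless matrices).

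The computation rests on two elementary identities. First, the barycentric identity
\begin{equation*}
\nabla\lambda_p\cdot\boldsymbol{t}_{qr}=\lambda_p(\texttt{v}_r)-\lambda_p(\texttt{v}_q)=\delta_{pr}-\delta_{pq}
\end{equation*}
reduces every dot product of a barycentric gradient against an edge vector to a $0$ or $\pm1$ entry. Second, the rank-one identity $(\boldsymbol{a}\otimes\boldsymbol{b}):(\boldsymbol{c}\otimes\boldsymbol{d})=(\boldsymbol{a}\cdot\boldsymbol{c})(\boldsymbol{b}\cdot\boldsymbol{d})$ combined with $\tr(\boldsymbol{a}\otimes\boldsymbol{b})=\boldsymbol{a}\cdot\boldsymbol{b}$ yields
\begin{equation*}
\dev(\boldsymbol{a}\otimes\boldsymbol{b}):(\boldsymbol{c}\otimes\boldsymbol{d})=(\boldsymbol{a}\cdot\boldsymbol{c})(\boldsymbol{b}\cdot\boldsymbol{d})-\tfrac13(\boldsymbol{a}\cdot\boldsymbol{b})(\boldsymbol{c}\cdot\boldsymbol{d}).
\end{equation*}

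I apply this formula to each of the four pairings of the type $\dev(\nabla\lambda_{i}\otimes\boldsymbol{t}_{i\ell}):(\boldsymbol{t}_{m'i'}\otimes\nabla\lambda_{\ell'})$ and its three siblings obtained by swapping $i\leftrightarrow j$ and $i'\leftrightarrow j'$, then substitute the Kronecker deltas dictated by the two cyclic permutations $(i,j,\ell,m)$ and $(i',j',\ell',m')$ in play. Because the four entries of any cyclic permutation of $(0,1,2,3)$ are pairwise distinct, most $\delta$'s vanish automatically; the dev-correction is precisely what cancels the surviving off-diagonal contributions that arise when the dual element carries a nonzero trace (namely when $\ell'\in\{i',m'\}$). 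The upshot is that the $8\times 8$ pairing matrix is the identity once the two families are indexed compatibly.

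The main obstacle will be the index bookkeeping: matching each running $\ell$ to the correct triple $(i(\ell),j(\ell),m(\ell))$ and verifying the dev-correction cancellation in the handful of cases where the naive inner product fails to be $\delta_{\ell\ell'}$. Once the identity pairing is verified, linear independence of each set is automatic, the cardinality match $8=\dim\mathbb{T}$ promotes each into a basis, and the duality itself is the claim. The figure panels provide a useful sanity check: the labels at each vertex already encode which pair of the eight basis elements is attached to which vertex through the cyclic-permutation convention.
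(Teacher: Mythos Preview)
Your approach is essentially the paper's—both rest on the barycentric identity $\nabla\lambda_p\cdot\boldsymbol{t}_{qr}=\delta_{pr}-\delta_{pq}$ and the rank-one contraction formula—but you have overlooked the one-line simplification that makes the case analysis disappear. Since $(i',j',\ell',m')$ is a permutation of $(0,1,2,3)$, neither $m'$ nor $i'$ equals $\ell'$, so
\[
\tr(\boldsymbol{t}_{m'i'}\otimes\nabla\lambda_{\ell'})=\boldsymbol{t}_{m'i'}\cdot\nabla\lambda_{\ell'}=\delta_{i'\ell'}-\delta_{m'\ell'}=0.
\]
Every element of the second family is therefore already in $\mathbb{T}$, not merely a coset representative ``modulo $\spa\{\boldsymbol{I}\}$'' as you wrote. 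Consequently the $\tfrac13(\boldsymbol{a}\cdot\boldsymbol{b})(\boldsymbol{c}\cdot\boldsymbol{d})$ correction in your formula vanishes identically; the case ``$\ell'\in\{i',m'\}$'' you flag never occurs, and there are no off-diagonal contributions for the $\dev$-part to cancel. The pairing reduces immediately to the product $(\nabla\lambda_i\cdot\boldsymbol{t}_{m'i'})(\boldsymbol{t}_{i\ell}\cdot\nabla\lambda_{\ell'})$, which the Kronecker identity handles directly. This is exactly the paper's two-sentence argument; your proposal is correct but anticipates bookkeeping that, once you notice the dual set is traceless, is not needed.
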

\begin{proof}

The duality follows from the identity $\boldsymbol{t}_{ij}\cdot\nabla\lambda_{\ell}=\delta_{j\ell}-\delta_{i\ell}$, where $\delta_{i\ell}$ is the Kronecker delta function, and $\boldsymbol{t}\otimes\nabla\lambda_{\ell} : \bs I = \bs t\cdot \nabla \lambda_{\ell} = 0$ for vector $\boldsymbol{t}$ tangent to $F_{\ell}$. 
\end{proof}

As $\nabla \lambda_i \parallel \bs n_{F_i}$, the basis \eqref{eq:dualbasis} is face-wise and each face contributes two while the basis~\eqref{eq:basis} is vertex-wise. They are illustrated in Fig. \ref{fig:Tbasis}.


Define 
\begin{equation*}
\tr_F^{\rm tn} \bs \tau = \bs n_F\times \bs \tau \bs n_F,
\end{equation*}
and $\tr^{\rm tn}: C(T; \mathbb T)\to L^2(\partial T;\mathbb R^2)$ as $\tr^{\rm tn}|_F = \tr_F^{\rm tn}$. 
Let the bubble polynomial space of degree $k$
\begin{equation*}
\mathbb B_{k}^{\rm tn}(T;\mathbb T):=\{\boldsymbol{\tau}\in\mathbb P_{k}(T;\mathbb T): \tr^{\rm tn} \bs \tau =0\}.
\end{equation*}
Notice that for the identity matrix $\bs I$, $\tr^{\rm tn} \bs I = 0$ but $\bs I\not\in \mathbb B_{k}^{\rm tn}(T;\mathbb T)$ as ${\rm trace} (\bs I)\neq 0$.

\begin{lemma}\label{lem:Tbasis0}
For $\ell,i = 0,\ldots, 3, i\neq \ell$, we have
\begin{equation*}
\lambda_{\ell}\dev(\nabla \lambda_{i}\otimes\boldsymbol{t}_{i \ell})\in  \mathbb B_{k}^{\rm tn}(T;\mathbb T). 
\end{equation*}
\end{lemma}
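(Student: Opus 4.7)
The plan is to verify the two defining conditions of $\mathbb B_k^{\rm tn}(T;\mathbb T)$ for the candidate $\bs\tau:=\lambda_\ell\dev(\nabla\lambda_i\otimes\bs t_{i\ell})$. Tracelessness is immediate, since $\dev$ produces traceless matrices and multiplication by the scalar $\lambda_\ell$ preserves tracelessness. The real work is to show that the tangential-normal trace $\bs n_F\times\bs\tau\,\bs n_F$ vanishes on every face $F\in\mathcal F(T)$, i.e. that $\bs\tau\bs n_F$ is parallel to $\bs n_F$ on each face.

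I would handle the four faces separately. On $F_\ell$ the barycentric coordinate $\lambda_\ell$ vanishes identically, so $\bs\tau|_{F_\ell}\equiv 0$ with nothing to check. On each of the remaining three faces $F_m$ with $m\neq\ell$, I would expand using $\dev A = A-\tfrac13(\tr A)\bs I$. From the identity $\bs t_{ij}\cdot\nabla\lambda_{k}=\delta_{jk}-\delta_{ik}$ already invoked in the proof of Lemma~\ref{lem:Tbasis}, one gets $\tr(\nabla\lambda_i\otimes\bs t_{i\ell})=\nabla\lambda_i\cdot\bs t_{i\ell}=-1$, so the deviation correction contributes $\tfrac13\bs n_m$, which is manifestly parallel to $\bs n_m$. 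Hence the task reduces to showing that the rank-one part
\begin{equation*}
(\nabla\lambda_i\otimes\bs t_{i\ell})\bs n_m=(\bs t_{i\ell}\cdot\bs n_m)\,\nabla\lambda_i
\end{equation*}
is parallel to $\bs n_m$ on $F_m$. For $m=i$ this is immediate because $\nabla\lambda_i$ is itself parallel to $\bs n_i$ (both are normal to $F_i$), so the expression lies along $\bs n_i$ regardless of the scalar. For $m\neq i,\ell$, the face $F_m$ contains both endpoints $\texttt{v}_i$ and $\texttt{v}_\ell$ of the edge, so the edge vector $\bs t_{i\ell}=\texttt{v}_\ell-\texttt{v}_i$ lies in the tangent plane of $F_m$ and the scalar factor $\bs t_{i\ell}\cdot\bs n_m$ vanishes.

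There is no genuine obstacle; the proof is a short piece of geometric bookkeeping using the duality between barycentric gradients and edge vectors. The only subtlety worth flagging is the one already emphasized before the lemma: $\bs I$ has zero tangential-normal trace but is \emph{not} in $\mathbb B_k^{\rm tn}(T;\mathbb T)$ because $\tr\bs I\neq 0$, which is why tracelessness must be verified in addition to the face conditions. In our situation this pitfall is avoided for free by the outer $\dev$.
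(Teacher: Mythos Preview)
Your proof is correct and follows essentially the same approach as the paper: both verify the tangential-normal trace vanishes face by face, using $\lambda_\ell|_{F_\ell}=0$, $\nabla\lambda_i\parallel\bs n_{F_i}$, the fact that the edge $\texttt{v}_i\texttt{v}_\ell$ lies in the two remaining faces so $\bs t_{i\ell}\cdot\bs n_m=0$ there, and the observation that the $\bs I$-correction from $\dev$ has vanishing tangential-normal trace. Your write-up is slightly more explicit in splitting off the rank-one part from the identity correction, but the argument is the same.
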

\begin{proof}
Let $(ij\ell m)$ be a cyclic permutation of $(0123)$. The edge $\texttt{v}_{i}\texttt{v}_{\ell}$ is contained in faces $F_{m}$ and $F_j$ and thus $\boldsymbol{t}_{i \ell}\cdot \bs n_F = 0$ for $F=F_m, F_{j}$. The vector $\nabla \lambda_i \parallel \bs n_{F_i}$ and thus $\bs n_{F_i}\times \nabla \lambda_i = 0$ on $F_i$. On the face $F_{\ell}$, $\lambda_{\ell}|_{F_{\ell}} = 0$. Notice that the identity matrix $\bs I$ satisfies $\tr^{\rm tn}\bs I = 0$. 

So we have verified $\tr^{\rm tn}(\lambda_{\ell}\dev(\nabla \lambda_{i}\otimes\boldsymbol{t}_{i \ell})) = 0$. 
\end{proof}

By changing $\nabla \lambda_i$ to the parallel vector $\bs n_i$, we present the following geometric decomposition of $\mathbb P_{k}(T;\mathbb T)$.
\begin{lemma}
We have the geometric decomposition 
\begin{align*}
\mathbb P_{k}(T;\mathbb T)&=\Oplus_{\ell=0}^3\big(\mathbb P_{k}(F_{\ell})\otimes\mathrm{span}\{\dev(\bs n_i\otimes\boldsymbol{t}_{i \ell}), \dev(\bs n_j\otimes\boldsymbol{t}_{j \ell}) \}\big) \Oplus \mathbb B_{k}^{\rm tn}(T;\mathbb T),
\end{align*}
where we give a characterization of the bubble space
\begin{equation*}
\mathbb B_{k}^{\rm tn}(T;\mathbb T)= \mathbb P_{k-1}(T)\otimes\mathrm{span}\{\lambda_{\ell}\dev(\boldsymbol{n}_i\otimes\boldsymbol{t}_{i\ell}), \lambda_{\ell}\dev(\boldsymbol{n}_j\otimes\boldsymbol{t}_{j\ell}), \ell =0,\ldots, 3\}.
\end{equation*}
\end{lemma}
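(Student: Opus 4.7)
The plan is to use a tangential-normal trace calculation on the face generators to verify the directness of the outer sum, and then use a dimension count together with the basis property from Lemma \ref{lem:Tbasis} to identify $\mathbb B_k^{\rm tn}(T;\mathbb T)$.

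First I would compute $\tr^{\rm tn}_{F_m}\dev(\bs n_i\otimes\bs t_{i\ell})$ on each face of $T$, using $\tr^{\rm tn}_F\bs\tau = \bs n_F\times\bs\tau\bs n_F$. For $m\notin\{i,\ell\}$, both $\texttt{v}_i,\texttt{v}_\ell$ lie in $F_m$ (the face opposite $\texttt{v}_m$), so $\bs t_{i\ell}\cdot \bs n_m = 0$; hence $\dev(\bs n_i\otimes\bs t_{i\ell})\bs n_m$ collapses to the multiple $-\tfrac13(\bs n_i\cdot\bs t_{i\ell})\bs n_m$ of $\bs n_m$, whose cross product with $\bs n_m$ vanishes. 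On $F_i$ the resulting vector is a multiple of $\bs n_i$, again killed by $\bs n_i\times(\cdot)$. Only on $F_\ell$ is the trace nonzero: it equals $(\bs t_{i\ell}\cdot \bs n_\ell)\,\bs n_\ell\times \bs n_i$. The same calculation for the companion generator $\dev(\bs n_j\otimes\bs t_{j\ell})$ produces $(\bs t_{j\ell}\cdot\bs n_\ell)\,\bs n_\ell\times \bs n_j$; the two results span the tangent plane of $F_\ell$ because $\{\bs n_\ell,\bs n_i,\bs n_j\}$ is a basis of $\mathbb R^3$. Thus the $\ell$-th face piece has tangential-normal trace supported only on $F_\ell$, and $\tr^{\rm tn}_{F_\ell}$ maps it surjectively onto $\mathbb P_k(F_\ell;\mathbb R^2)$.

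Directness of the outer sum follows at once: the four face pieces have traces supported on disjoint faces, and each intersects $\mathbb B_k^{\rm tn}(T;\mathbb T)$ trivially because trace surjectivity on $F_\ell$ forces a traceless element of the face piece to vanish. A dimension count yields $\dim \mathbb B_k^{\rm tn}(T;\mathbb T)=\dim \mathbb P_k(T;\mathbb T) - 4\cdot 2\binom{k+2}{2} = 8\binom{k+3}{3} - 8\binom{k+2}{2} = 8\binom{k+2}{3}$. For the bubble characterization, Lemma \ref{lem:Tbasis0} already places the ansatz inside $\mathbb B_k^{\rm tn}(T;\mathbb T)$. For the reverse inclusion I would establish injectivity of the assembly map
$$\Phi\colon\mathbb P_{k-1}(T)^8 \to \mathbb P_k(T;\mathbb T),\qquad (p_{\ell,s})\mapsto \sum_{\ell=0}^3\sum_{s=1,2} p_{\ell,s}\,\lambda_\ell\,B_{\ell,s},$$
with $B_{\ell,1}=\dev(\bs n_i\otimes\bs t_{i\ell})$ and $B_{\ell,2}=\dev(\bs n_j\otimes\bs t_{j\ell})$. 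At any interior point $x\in T$ every $\lambda_\ell(x)>0$ and, by Lemma \ref{lem:Tbasis} (with $\nabla\lambda_i$ replaced by the parallel vector $\bs n_i$), $\{B_{\ell,s}\}$ is a basis of $\mathbb T$. Hence $\Phi(p)(x)=0$ forces $p_{\ell,s}(x)=0$ for every $\ell,s$, and since this holds on a dense open set and the $p_{\ell,s}$ are polynomials we conclude $p\equiv 0$. The image of $\Phi$ then has dimension $8\binom{k+2}{3}$, matching $\dim\mathbb B_k^{\rm tn}(T;\mathbb T)$, so equality holds.

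The main obstacle I expect is the bookkeeping of the cyclic permutation $(ij\ell m)$ and the implicit extension of the face polynomial factor $\mathbb P_k(F_\ell)$ to a subspace of $\mathbb P_k(T)$---most cleanly as polynomials independent of $\lambda_\ell$. Once these conventions are fixed, the trace computation and the pointwise basis argument for $\Phi$ are straightforward.
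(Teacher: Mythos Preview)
Your argument is correct and is precisely the route the paper implicitly has in mind: the trace calculation you carry out on $\dev(\bs n_i\otimes\bs t_{i\ell})$ is the constant-tensor version of the computation in Lemma~\ref{lem:Tbasis0}, and the injectivity of $\Phi$ is exactly the basis statement of Lemma~\ref{lem:Tbasis} applied pointwise. The paper in fact states this lemma without proof, presenting it as a direct consequence of the two preceding lemmas; your write-up fills in the details faithfully.

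One small point worth tightening: your sentence ``A dimension count yields $\dim\mathbb B_k^{\rm tn}(T;\mathbb T)=\dim\mathbb P_k(T;\mathbb T)-4\cdot 2\binom{k+2}{2}$'' presupposes that the face pieces together with $\mathbb B_k^{\rm tn}(T;\mathbb T)$ already span $\mathbb P_k(T;\mathbb T)$. You have in fact proved this---since the face pieces map isomorphically onto the full trace space $\bigoplus_\ell \mathbb P_k(F_\ell;\mathbb R^2)$, any $\bs\tau\in\mathbb P_k(T;\mathbb T)$ can be corrected by a face piece to land in the kernel $\mathbb B_k^{\rm tn}$---but it would read more cleanly if you made that step explicit before invoking the dimension count. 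Alternatively, you can bypass the computation of $\dim\mathbb B_k^{\rm tn}$ entirely: the four face pieces and the image of $\Phi$ are mutually disjoint (the former by trace support, the latter because $\img\Phi\subseteq\mathbb B_k^{\rm tn}$ by Lemma~\ref{lem:Tbasis0}), and their dimensions add up to $8\binom{k+2}{2}+8\binom{k+2}{3}=8\binom{k+3}{3}=\dim\mathbb P_k(T;\mathbb T)$, which forces both the decomposition and $\img\Phi=\mathbb B_k^{\rm tn}$ in one stroke.

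Your closing remark about the convention for embedding $\mathbb P_k(F_\ell)\hookrightarrow\mathbb P_k(T)$ is well taken; identifying it with polynomials in $\lambda_i,\lambda_j,\lambda_m$ (i.e., independent of $\lambda_\ell$) is the natural choice and makes the face-piece/bubble split manifestly compatible with the decomposition $\mathbb P_k(T)=\mathbb P_k(F_\ell)\oplus\lambda_\ell\mathbb P_{k-1}(T)$.
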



Define the global finite element space
\begin{align*}
{\Sigma}_{k,h}^{\rm tn}:=\{\boldsymbol{\tau}_h \in L^{2}(\Omega; \mathbb{T})&:\boldsymbol{\tau}_h|_T \in \mathbb{P}_{k}(T; \mathbb{T}) \textrm{ for each } T \in \mathcal{T}_{h}, \\
&\quad\text{and all the DoFs \eqref{eq:curdivfemdof1}-\eqref{eq:curdivfemdof2} are single-valued}\}.
\end{align*}

\begin{remark}\rm
The construction can be readily extended to arbitrary dimension $\mathbb R^d$ for $d\geq 2$. There are $d+1$ faces for a $d$-simplex. At each face, we have $d-1$ linearly independent traceless matrices $\{\bs t_{F,i}\otimes \bs n_{F}\}_{i=1}^{d-1}$ and in total $(d+1)(d-1)$ such matrices form a basis of $\mathbb T$. The basis of bubble functions is constructed vertices-wisely $\{\lambda_{\ell}\dev(\nabla \lambda_{i}\otimes\boldsymbol{t}_{i \ell})\}$ for $\ell = 0,\ldots, d$ and $d-1$ different $i$ for each $\ell$.  \qed
%
 \end{remark}

\begin{remark}\rm
Since the basis functions are expressed in terms of intrinsic geometric quantities—tangential and normal vectors along with barycentric coordinates—the defined finite element spaces are affine invariant, meaning the space remains unchanged under affine transformations. Thus, the traditional method of using a reference tetrahedron and transforming to the physical element can be applied, resulting in the same finite element space. In particular, curved elements can be handled using the Piola transform, as described in equations (3.76)-(3.77) of \cite{Monk2003}. Extending the two-dimensional work of \cite{ArnoldWalker2020} to a distributional mixed finite element method specifically tailored for the quad-curl problem on domains with curved boundaries is an interesting topic for future research.
\qed \end{remark}

As $H^{-1}(\operatorname{div}, \Omega) = (H_0(\curl,\Omega))'$, we can use $H(\curl)$-conforming finite elements, i.e. N\'ed\'elec elements~\cite{Nedelec1980,Nedelec1986} for the pair space. Take 
\begin{equation*}
\mathcal N_{k,\ell}^c(T):=\boldsymbol{x}\times\mathbb P_{k-1}(T;\mathbb R^3) + \grad\mathbb P_{\ell+1}(T), \quad \ell = k \text{ or } k -1
\end{equation*}
 as the space of shape functions. The element $\mathcal N_{k,k-1}^c$ is the first kind and $\mathcal N_{k, k}^c$ is the second kind N\'ed\'elec elements~\cite{Nedelec1980,Nedelec1986}. 
The degrees of freedom $\mathcal N_{k,\ell}^c(T)$ are given by
\begin{subequations}\label{eq:Nedelec}
\begin{align}
(\boldsymbol v\cdot\boldsymbol t, q)_e, & \quad q\in\mathbb P_{\ell}(e),  e\in\mathcal E(T),\label{Hcurlfem3ddof1}\\
(\boldsymbol n\times\boldsymbol v\times\boldsymbol{n}, \bs q)_F, & \quad \boldsymbol q\in\curl_F\mathbb P_{
k-1}(F)\oplus\bs x\mathbb P_{\ell-2}(F),  F\in\mathcal F(T),\label{Hcurlfem3ddof2}\\
(\boldsymbol v, \boldsymbol q)_T, & \quad\boldsymbol q\in\curl\mathbb P_{k-2}(T; \mathbb R^3)\oplus\boldsymbol x\mathbb P_{\ell-3}(T). \label{Hcurlfem3ddof3}
\end{align}
\end{subequations}

Define global finite element spaces
\begin{align*}
\mathbb V_{(k,\ell), h}^{\curl}:=\{\boldsymbol{v}_h\in H(\curl,\Omega): &\boldsymbol{v}_h|_T \in \mathcal N_{k,\ell}^c(T) \textrm{ for } T \in \mathcal{T}_{h} \\
&\text{ and all the DoFs \eqref{Hcurlfem3ddof1}-\eqref{Hcurlfem3ddof3} are single-valued}\}.
\end{align*}
Let $\mathring{\mathbb V}_{(k,\ell),h}^{\curl}:=\mathbb V_{(k,\ell),h}^{\curl}\cap H_0(\curl,\Omega)$ and $\mathbb V_{k, h}^{\curl}:=\mathbb V_{(k,k), h}^{\curl}$.

We use the standard Lagrange element for $H^1(\Omega)$
\begin{equation*}
\mathbb V_{\ell+1, h}^{\grad}:=\{\psi_h\in H^1(\Omega): \psi_h|_T \in\mathbb P_{\ell+1}(T) \textrm{ for } T \in \mathcal{T}_{h}\},
\end{equation*}
 and let $\mathring{\mathbb V}_{\ell+1,h}^{\grad}:=\mathbb V_{\ell+1,h}^{\grad}\cap H_0^1(\Omega)$. 
Let
\begin{equation*}
\mathbb V_{k+1,h}^{\grad}(\mathbb{R}^{3}):=\mathbb V_{k+1,h}^{\grad}\otimes\mathbb{R}^{3}, \quad \text{ and } \quad \mathbb V_{k,h}^{\curl}(\mathbb{M}):=\mathbb{R}^{3}\otimes\mathbb V_{k,h}^{\curl}.
\end{equation*}

The degree of polynomial may be skipped in the notation of finite element spaces when it is clear from the context. 

\subsection{Distributional finite element complex}

By treating the right hand side of \eqref{eq:weakcurldiv} as a bilinear form defined on $\bs \Sigma^{\rm tn}\times V_0^{\curl}$, the weak operators can be naturally extended to the discrete spaces by restricting the bilinear form to subspaces. Define
\begin{align*}
&(\curl\div)_h = J_h(\curl\div)_w: {\Sigma}_{h}^{\rm tn} \to (\mathring{\mathbb V}_{h}^{\curl})' \cong \mathring{\mathbb V}_{h}^{\curl}, \\
&(\grad\curl)_h = J_h(\grad\curl)_w:  \mathring{\mathbb V}_{h}^{\curl} \to ({\Sigma}_{h}^{\rm tn} )'\cong {\Sigma}_{h}^{\rm tn},
\end{align*}
where the isomorphism $J_h$ is the Reisz representation of the $L^2$-inner product and realized by the inverse of the mass matrix. More precisely, for $\bs \sigma_h\in \Sigma_h^{\rm tn}$, $(\curl\div)_h\bs{\sigma}_h\in \mathring{\mathbb V}_{h}^{\curl}$ such that
\begin{align}
\notag
((\curl\div)_h\bs{\sigma}_h, \bs v_h) &=\langle(\curl\div)_w\bs{\sigma}_h, \bs v_h\rangle\\
\label{eq:curldivhcurldivw}
&= \sum_{T\in \mathcal{T}_{h}}(\operatorname{div} \boldsymbol{\sigma}_h, \curl\boldsymbol{v}_h)_{T}-\sum_{F\in \mathring{\mathcal{F}}_{h}}(\llbracket \boldsymbol{n}^{\intercal}\boldsymbol{\sigma}_h\boldsymbol{n} \rrbracket, \boldsymbol{n}_F\cdot\curl\boldsymbol{v}_h)_{F}\\
\notag
&= - \sum_{T\in \mathcal{T}_{h}}(\boldsymbol{\sigma}_h, \grad\curl\boldsymbol{v}_h)_{T}+ \sum_{F\in\mathcal F_h}(\bs n\times\bs\sigma_h\bs n, \llbracket \bs n\times \curl\bs v_h\rrbracket)_F\\
\notag
&=-(\boldsymbol{\sigma}_h, (\grad\curl)_h\bs v_h).
\end{align}
For a fixed triangulation $\mathcal T_h$, $(\curl\div)_h$ is well defined which can be obtained by inverting the mass matrix of $\mathring{\mathbb V}_h^{\curl}$. However $\{(\curl\div)_h\}$ is not uniformly bounded when $h\to 0$ as \eqref{eq:curldivhcurldivw} is not a bounded linear functional of $L^2(\Omega)$. 



Similarly define discrete div operator $\div_h: H_0(\curl,\Omega)\to \mathring{\mathbb V}_h^{\grad}$ by 
\begin{equation*}
(\div_h\boldsymbol{v}, \psi_h) = -(\boldsymbol{v}, \grad \psi_h),\quad \psi_h\in \mathring{\mathbb V}_h^{\grad}.
\end{equation*}
Notice that $\div_h$ is the $L^2$-adjoint of $-\grad$ restricted to $\mathring{\mathbb V}_h^{\grad}$. Again for a fixed triangulation $\mathcal T_h$, $\div_h$ is well defined which can be obtained by inverting the mass matrix of $\mathring{\mathbb V}_h^{\grad}$. However $\{\div_h\}$ is not uniformly bounded as $h\to 0$ as $(\bs v,\grad \psi)$ is in general not a bounded linear functional of $L^2$ unless $\bs v\in H(\div,\Omega)$.

\begin{theorem}
The distributional finite element $\curl\div$ complex is
\begin{equation}\label{femcurldivcomplex}
\begin{aligned}
\mathbb{R}^{3} \times\{0\} \rightarrow  \mathbb V_{k+1,h}^{\grad}(\mathbb{R}^{3}) \times \mathbb{R} &\xrightarrow{(\grad\!, \,\mskw\boldsymbol{x})} \mathbb V_{k,h}^{\curl}(\mathbb{M})\xrightarrow{\dev\curl}\\ 
\Sigma_{k-1,h}^{\rm tn}
& \xrightarrow{(\curl\div)_h} \mathring{\mathbb V}_{(k,\ell), h}^{\curl} \xrightarrow{\div_h} \mathring{\mathbb V}_{\ell+1, h}^{\grad} \rightarrow 0.
\end{aligned}
\end{equation}
When $\Omega\subset\mathbb R^3$ is a bounded and topologically trivial Lipschitz domain, \eqref{femcurldivcomplex} is exact.
\end{theorem}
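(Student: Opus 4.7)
The plan is to prove exactness of \eqref{femcurldivcomplex} slot by slot: first verify the complex property, then settle injectivity at the left end and surjectivity at the right end, handle exactness at $\mathbb V_{k,h}^{\curl}(\mathbb M)$ directly using the row-wise scalar Lagrange--N\'ed\'elec subcomplex, and finally attack the two remaining interior slots $\Sigma_{k-1,h}^{\rm tn}$ and $\mathring{\mathbb V}_{(k,\ell),h}^{\curl}$ together by means of a discrete analogue of the Helmholtz decomposition established in the preceding lemma.

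\textbf{Complex property.} The composition $\dev\curl\circ(\grad,\mskw\bs x)$ vanishes from row-wise $\curl\grad=0$ and $\curl(\mskw\bs x)=2\bs I$. For $(\curl\div)_h\circ\dev\curl$, I first check that $\dev\curl\bs\tau\in\Sigma_{k-1,h}^{\rm tn}$ whenever $\bs\tau\in\mathbb V_{k,h}^{\curl}(\mathbb M)$: tangential continuity of each row $\bs\tau_i$ makes $(\curl\bs\tau_i)\cdot\bs n=\rot_F(\Pi_F\bs\tau_i)$ single-valued across faces, so both $(\curl\bs\tau)\bs n$ and $\bs n\times(\dev\curl\bs\tau)\bs n$ are continuous. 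By Lemma~\ref{lem:curldivw} and the distributional identity $\curl\div\dev\curl=0$ (from row-wise $\div\curl=0$ and $\curl\grad=0$), the composition vanishes on $C_0^\infty$ test vectors, and a direct face-wise calculation extends this to all test functions in $\mathring{\mathbb V}_{(k,\ell),h}^{\curl}$. For $\div_h\circ(\curl\div)_h$, I pair against $\psi_h\in\mathring{\mathbb V}_{\ell+1,h}^{\grad}$ and reduce to $-\langle(\curl\div)_w\bs\sigma_h,\grad\psi_h\rangle$, where $\curl\grad\psi_h=0$ kills both the volume and jump terms in~\eqref{eq:weakcurldiv}.

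\textbf{Endpoints and first interior slot.} If $\grad\bs v+c\mskw\bs x=0$, the symmetric part forces $\sym\grad\bs v=0$, so $\bs v=\bs a+\bs b\times\bs x$ and $\grad\bs v=\mskw\bs b$; then $\mskw(\bs b+c\bs x)=0$ for all $\bs x$ forces $c=0$ and $\bs b=0$, giving the leftmost kernel $\mathbb R^3\times\{0\}$. For surjectivity of $\div_h$, given $\psi_h$ solve the discrete Poisson problem $(\grad w_h,\grad\phi_h)=(\psi_h,\phi_h)$ in $\mathring{\mathbb V}_{\ell+1,h}^{\grad}$ and set $\bs v_h:=-\grad w_h$; the inclusion $\grad\mathring{\mathbb V}_{\ell+1,h}^{\grad}\subset\mathring{\mathbb V}_{(k,\ell),h}^{\curl}$ from the standard Lagrange--N\'ed\'elec subcomplex yields $\div_h\bs v_h=\psi_h$. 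For exactness at $\mathbb V_{k,h}^{\curl}(\mathbb M)$, $\dev\curl\bs\tau_h=0$ gives $\curl\bs\tau_h=\tfrac13\tr(\curl\bs\tau_h)\bs I$; row-wise divergence then gives $\grad\tr(\curl\bs\tau_h)=\div\curl\bs\tau_h=0$, so $\tr(\curl\bs\tau_h)=3c$ is constant. Therefore $\curl(\bs\tau_h-\tfrac c2\mskw\bs x)=0$ row-wise, and the exact scalar Lagrange--N\'ed\'elec complex applied in each row produces $\bs v_h\in\mathbb V_{k+1,h}^{\grad}(\mathbb R^3)$ with $\bs\tau_h=\grad\bs v_h+\tfrac c2\mskw\bs x$.

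\textbf{Main obstacle.} The genuinely hard step is exactness at $\Sigma_{k-1,h}^{\rm tn}$ and at $\mathring{\mathbb V}_{(k,\ell),h}^{\curl}$. I plan to replicate at the discrete level the Helmholtz decomposition $H^{-1}(\curl\div,\Omega;\mathbb T)=\dev\curl H(\curl,\Omega;\mathbb M)\oplus\mskw K^c$ from the preceding lemma, producing a splitting $\Sigma_{k-1,h}^{\rm tn}=\dev\curl\mathbb V_{k,h}^{\curl}(\mathbb M)\oplus W_h$ on which $(\curl\div)_h|_{W_h}$ is an isomorphism onto $\ker(\div_h)\subset\mathring{\mathbb V}_{(k,\ell),h}^{\curl}$. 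This will close both remaining exactness statements simultaneously. The delicate point is the construction of $W_h$: the literal candidate $\mskw\,\ker(\div_h)$ fails because of a polynomial-degree mismatch (especially when $\ell=k$), so $W_h$ must be built using the face DoFs~\eqref{eq:curdivfemdof1} and the bubble characterization from~\eqref{eq:basis}. The key technical lemma will be a discrete inf--sup estimate of the form $\sup_{\bs\sigma_h\in\Sigma_{k-1,h}^{\rm tn}}|\langle(\curl\div)_w\bs\sigma_h,\bs v_h\rangle|/\|\bs\sigma_h\|\gtrsim\|\bs v_h\|$ for every $\bs v_h\in\ker(\div_h)$; once this is proved, applying the abstract framework of~\cite{ChenHuang2018} to the commutative diagram in the preceding Helmholtz lemma yields the discrete decomposition and hence the remaining exactness.
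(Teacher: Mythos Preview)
Your overall architecture matches the paper's: the complex property, the endpoints, and exactness at $\mathbb V_{k,h}^{\curl}(\mathbb M)$ are all handled the same way (the paper is terser, appealing to the continuous exactness rather than redoing the rigid-motion computation, but your argument is fine). The substantive gap is in your ``main obstacle'' paragraph.

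You correctly diagnose that $\mskw K_h^c$ has the wrong polynomial degree to sit inside $\Sigma_{k-1,h}^{\rm tn}$, but your proposed fix---building $W_h$ ``using the face DoFs and the bubble characterization''---is too vague, and your key technical lemma (the inf--sup on $\ker(\div_h)$) is stated without any mechanism for proving it. The paper's missing ingredient is very concrete: take $W_h := I_h^{\rm tn}(\mskw K_h^c)$, where $I_h^{\rm tn}$ is the canonical interpolation into $\Sigma_{k-1,h}^{\rm tn}$ defined by the DoFs~\eqref{eq:curdivfemdof1}--\eqref{eq:curdivfemdof2}. The point is the commutation identity
\[
(\curl\div)_h\bigl(I_h^{\rm tn}(\mskw\boldsymbol u_h)\bigr) \;=\; -(\curl\curl)_h\boldsymbol u_h,\qquad \boldsymbol u_h\in K_h^c,
\]
which follows because $I_h^{\rm tn}$ is designed so that $(\curl\div)_h\circ I_h^{\rm tn} = (\curl\div)_w$ when tested against $\mathring{\mathbb V}_{(k,\ell),h}^{\curl}$, together with $\div\mskw = -\curl$. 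Since $(\curl\curl)_h$ is an isomorphism on $K_h^c$ by the discrete Poincar\'e inequality, this immediately gives surjectivity of $(\curl\div)_h$ onto $K_h^c=\ker(\div_h)$ and simultaneously proves your inf--sup (choose $\boldsymbol\sigma_h = I_h^{\rm tn}(\mskw\boldsymbol v_h)$, bound $\|\boldsymbol\sigma_h\|\lesssim\|\boldsymbol v_h\|$ by scaling). Without this construction your inf--sup has no proof and the argument stalls.

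A second difference: you want to extract exactness at $\Sigma_{k-1,h}^{\rm tn}$ from the same Helmholtz splitting, but that requires showing $W_h\cap\dev\curl\mathbb V_{k,h}^{\curl}(\mathbb M)=\{0\}$, which is not obvious a priori. The paper decouples the two statements: surjectivity onto $K_h^c$ is handled as above, while $\ker((\curl\div)_h)=\dev\curl\mathbb V_{k,h}^{\curl}(\mathbb M)$ is proved by a direct dimension count using Euler's formula and $4|\mathcal T_h|=|\mathcal F_h|+|\mathring{\mathcal F}_h|$. (An alternative the paper sketches in a remark: show that $(\curl\div)_h\boldsymbol\tau_h=0$ forces the \emph{distributional} $\curl\div\boldsymbol\tau_h=0$ via a second interpolation identity $\langle(\curl\div)_w\boldsymbol\tau_h,\boldsymbol v\rangle = ((\curl\div)_h\boldsymbol\tau_h, I_h^{\curl}\boldsymbol v)$, then invoke the continuous complex.) Either route is cleaner than trying to force both exactness statements through a single decomposition whose direct-sum property has not been established.
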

\begin{proof}
 As $\curl\grad = 0$, it is straightforward to verify $\div_h(\curl\div)_h = 0$. Take $\boldsymbol{\tau}_h=\dev\curl\boldsymbol{\sigma}_h$ with $\boldsymbol{\sigma}_h\in\mathbb V_h^{\curl}(\mathbb{M})$. Since $\boldsymbol{n}\times\boldsymbol{\tau}_h \boldsymbol{n}=\boldsymbol{n}\times(\curl\boldsymbol{\sigma}_h)\boldsymbol{n}$ is single-value across each face $F\in\mathring{\mathcal F}_h$, we get $\boldsymbol{\tau}_h\in{\Sigma}^{\rm tn}_h$. In the distribution sense, $\curl\div \dev \curl = 0$ and so is $(\curl\div)_h \dev \curl $. In summary, we have verified \eqref{femcurldivcomplex} is a complex.

Verification of $\ker (\grad, \mskw \bs x) = \mathbb{R}^{3} \times\{0\}$ is trivial. For $\bs \tau_h \in \mathbb V_{k,h}^{\curl}(\mathbb{M})$ and $\dev\curl (\bs \tau_h) = 0$, by the exactness of \eqref{distribcurldivcomplex}, we can find $(\bs v,c)\in H^1(\Omega;\mathbb R^3)\times \mathbb R$ s.t. $\grad \bs v + c\mskw \bs x = \bs \tau_h$. As $\bs \tau_h$ is polynomial of degree $k$, we conclude $\bs v\in \mathbb V_{k+1,h}^{\grad}(\mathbb{R}^{3})$.   

From the finite element de Rham complex $\grad\mathring{\mathbb V}_{\ell+1, h}^{\grad}=\mathring{\mathbb V}_{(k,\ell), h}^{\curl}\cap\ker(\curl)$, we have
\begin{equation}\label{eq:divhonto}
\div_h\mathring{\mathbb V}_{(k,\ell),h}^{\curl}=\mathring{\mathbb V}_{\ell+1, h}^{\grad}.
\end{equation}

It remains to prove
\begin{align}
\label{eq:kercurldiv}\dev\curl\mathbb V_{k,h}^{\curl}(\mathbb{M})&=\Sigma_{k-1,h}^{\rm tn}\cap\ker((\curl\div)_h),\\
\label{eq:kerdivh}(\curl\div)_h \Sigma_{k-1,h}^{\rm tn} &= \mathring{\mathbb V}_{(k,\ell),h}^{\curl} \cap \ker (\div_h).
\end{align}
We will prove \eqref{eq:kerdivh} in Corollary \ref{cor:kerdivh}  and \eqref{eq:kercurldiv} in Corollary \ref{cor:kercurldivh}.
\end{proof}


\subsection{Characterization of null spaces}
Define $K_{h}^c=\mathring{\mathbb V}_{(k,\ell),h}^{\curl} \cap \ker (\div_h)$ and $(\curl\curl)_h: K_{h}^c \to K_{h}^c$ so that 
\begin{equation}\label{eq:curcurlh}	
((\curl\curl)_h\boldsymbol{u}_h, \boldsymbol{v}_h)=(\curl\boldsymbol{u}_h, \curl\boldsymbol{v}_h),\quad \boldsymbol{v}_h\in K_h^c.
\end{equation}

\begin{lemma}
We have the discrete Poincar\'e inequalities
\begin{align}
\label{eq:curlhpoincare}	
\|\bs v_h\| &\lesssim  \|\curl\boldsymbol{v}_h\|, \quad\quad\quad\;\;\;\, \boldsymbol{v}_h\in K_h^c,\\
\label{eq:curlcurlhpoincare}	
\|\boldsymbol{v}_h\|_{H(\curl)}&\lesssim \|(\curl\curl)_h\boldsymbol{v}_h\|,\quad \boldsymbol{v}_h\in K_h^c.
\end{align}
\end{lemma}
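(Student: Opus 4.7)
The plan is to establish \eqref{eq:curlhpoincare} by combining the continuous Maxwell--Poincaré inequality with a bounded commuting quasi-interpolation, and then to deduce \eqref{eq:curlcurlhpoincare} from \eqref{eq:curlhpoincare} together with the definition \eqref{eq:curcurlh}.

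For \eqref{eq:curlhpoincare}, given $\bs v_h\in K_h^c$, I would first invoke the continuous Helmholtz decomposition to produce $\bs z\in H_0(\curl,\Omega)$ with $\div \bs z=0$ and $\curl \bs z=\curl \bs v_h$, satisfying the continuous Maxwell--Poincaré bound $\|\bs z\|\lesssim \|\curl \bs v_h\|$. The decisive tool is a bounded commuting quasi-interpolation pair $(I_h^{\curl},I_h^{\div})$ of Sch\"oberl or Falk--Winther type, for which $\curl I_h^{\curl}=I_h^{\div}\curl$, $I_h^{\curl}$ maps $H_0(\curl,\Omega)$ into $\mathring{\mathbb V}_{(k,\ell),h}^{\curl}$, and $\|I_h^{\curl}\bs w\|\lesssim \|\bs w\|_{H(\curl)}$. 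Since $\curl \bs v_h$ already lies in $\curl \mathring{\mathbb V}_{(k,\ell),h}^{\curl}$, the companion relation gives $I_h^{\div}\curl \bs v_h=\curl \bs v_h$, so $\curl(\bs v_h-I_h^{\curl}\bs z)=0$. By exactness of the discrete de Rham complex at $\mathring{\mathbb V}_{(k,\ell),h}^{\curl}$, there exists $\psi_h\in \mathring{\mathbb V}_{\ell+1,h}^{\grad}$ with $\bs v_h-I_h^{\curl}\bs z=\grad \psi_h$. Testing against $\bs v_h$ and using $\bs v_h\in\ker(\div_h)$, i.e. $(\bs v_h,\grad \mathring{\mathbb V}_{\ell+1,h}^{\grad})=0$, then yields
\begin{equation*}
\|\bs v_h\|^2=(\bs v_h,I_h^{\curl}\bs z)\le \|\bs v_h\|\,\|I_h^{\curl}\bs z\|\lesssim \|\bs v_h\|\,\|\bs z\|_{H(\curl)}\lesssim \|\bs v_h\|\,\|\curl \bs v_h\|,
\end{equation*}
which is \eqref{eq:curlhpoincare}.

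The second inequality then follows almost immediately. Setting $\bs w_h=\bs v_h$ in \eqref{eq:curcurlh} gives $\|\curl \bs v_h\|^2=((\curl\curl)_h\bs v_h,\bs v_h)\le \|(\curl\curl)_h\bs v_h\|\,\|\bs v_h\|$, and combining with \eqref{eq:curlhpoincare} produces $\|\curl \bs v_h\|\lesssim \|(\curl\curl)_h\bs v_h\|$. A second application of \eqref{eq:curlhpoincare} upgrades this to the full $H(\curl)$-estimate $\|\bs v_h\|_{H(\curl)}\lesssim \|(\curl\curl)_h\bs v_h\|$.

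The main obstacle is the quasi-interpolation step. The canonical N\'ed\'elec interpolant requires too much smoothness to be applied to the merely $H_0(\curl)\cap H(\div)$-regular auxiliary field $\bs z$, so one cannot simply use it. Supplying a single interpolation that is simultaneously $L^2$-stable, commutes with $\curl$ via an $H(\div)$-companion, and preserves the homogeneous tangential trace into $\mathring{\mathbb V}_{(k,\ell),h}^{\curl}$ is the nontrivial input; it is by now standard (Sch\"oberl; Falk--Winther; Christiansen--Winther) and available for both the first and second kind N\'ed\'elec families on shape-regular Lipschitz meshes, after which all remaining manipulations are routine.
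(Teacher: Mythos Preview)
Your argument is correct. For \eqref{eq:curlcurlhpoincare} it is identical to the paper's proof: the paper also tests \eqref{eq:curcurlh} with $\bs u_h=\bs v_h$, applies Cauchy--Schwarz and \eqref{eq:curlhpoincare} to get $\|\curl\bs v_h\|\lesssim\|(\curl\curl)_h\bs v_h\|$, and then invokes \eqref{eq:curlhpoincare} once more.

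For \eqref{eq:curlhpoincare} the paper does not give an argument at all; it simply cites Girault--Raviart and Monk. Your proof via a continuous divergence-free lifting together with an $L^2$-bounded commuting projection that preserves the discrete space is one of the standard ways to establish exactly the result being cited, so you are effectively supplying the missing details rather than taking a different route. The only point worth flagging is that you rely on $I_h^{\div}$ being an actual projection (so that $I_h^{\div}\curl\bs v_h=\curl\bs v_h$) and on $I_h^{\curl}$ respecting the homogeneous tangential trace; both properties hold for the Falk--Winther and Christiansen--Winther constructions you mention, so this is fine.
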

\begin{proof}
The first Poincar\'e inequality \eqref{eq:curlhpoincare} can be found in~\cite[Lemma 3.4 and Theorem 3.6]{GiraultRaviart1986} and~\cite[Lemma 7.20]{Monk2003}.
Consequently $(\curl\cdot,\curl\cdot)$ is an inner product on $K_h^c$ and the operator $(\curl\curl)_h$ is isomorphic.

Taking $\boldsymbol{u}_h=\boldsymbol{v}_h$ in \eqref{eq:curcurlh} and applying \eqref{eq:curlhpoincare}, we get
\begin{equation*}
\|\curl\boldsymbol{v}_h\|^2\leq \|(\curl\curl)_h\boldsymbol{v}_h\|\|\boldsymbol{v}_h\|\lesssim \|(\curl\curl)_h\boldsymbol{v}_h\|\|\curl\boldsymbol{v}_h\|. 
\end{equation*}
Hence $\|\curl\boldsymbol{v}_h\|\lesssim \|(\curl\curl)_h\boldsymbol{v}_h\|.$
The proof is finished by applying \eqref{eq:curlhpoincare} again to bound $\|\bs v_h\|$.
\end{proof}


We will introduce interpolation operators satisfying the following commutative diagrams: 
\begin{figure}[htbp]
\label{fig:interpolation}
\centering
\includegraphics*[width=4cm]{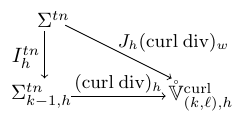}
\qquad
\includegraphics*[width=4cm]{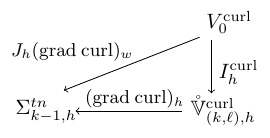}
\caption{Identities connecting the weak differential operators and interpolation operators.}
\end{figure}

Let $I_h^{\rm tn}:\Sigma^{\rm tn}\to {\Sigma}^{\rm tn}_{k-1,h}$ be the interpolation operator using DoF \eqref{eq:curldivdof} and denote by $\bs \sigma_I = I_h^{\rm tn} \bs \sigma$. 

\begin{lemma}
For $\boldsymbol{\tau}\in \Sigma^{\rm tn}$ and $\boldsymbol{v}_h\in \mathring{\mathbb V}_{(k,\ell),h}^{\curl}$, it holds
\begin{equation}\label{eq:curdivhcdprop}
((\curl\div)_h(I_h^{\rm tn}\boldsymbol{\tau}), \bs v_h)=\langle (\curl\div)_w\boldsymbol{\tau}, \bs v_h \rangle.
\end{equation}	
\end{lemma}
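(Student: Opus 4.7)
The plan is: (i) reduce \eqref{eq:curdivhcdprop} to showing $\langle(\curl\div)_w(\bs\tau - I_h^{\rm tn}\bs\tau),\bs v_h\rangle = 0$; (ii) integrate by parts element-wise to rewrite the left side in a form whose surviving terms are annihilated by the DoFs of $\Sigma_{k-1,h}^{\rm tn}$; (iii) apply those DoFs. By the definition of $(\curl\div)_h$ right before \eqref{eq:curldivhcurldivw}, we have $((\curl\div)_h(I_h^{\rm tn}\bs\tau),\bs v_h) = \langle(\curl\div)_w(I_h^{\rm tn}\bs\tau),\bs v_h\rangle$, so \eqref{eq:curdivhcdprop} reduces to proving $\langle(\curl\div)_w\bs\sigma,\bs v_h\rangle = 0$ for $\bs\sigma := \bs\tau - I_h^{\rm tn}\bs\tau$. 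Both $\bs\tau$ and $I_h^{\rm tn}\bs\tau$ lie in $\Sigma^{\rm tn}$, hence $\bs\sigma\in\Sigma^{\rm tn}$.

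Next, the same element-wise integration by parts used to produce the third line of \eqref{eq:curldivhcurldivw} applies verbatim with $\bs\sigma$ in place of $\bs\sigma_h$ (the derivation only uses $\bs\sigma\in\Sigma^{\rm tn}$ and $\bs v_h\in\mathring{\mathbb V}^{\curl}_{(k,\ell),h}\subset V_0^{\curl}$), yielding
\begin{equation*}
\langle(\curl\div)_w\bs\sigma,\bs v_h\rangle = -\sum_{T\in\mathcal T_h}(\bs\sigma,\grad\curl\bs v_h)_T + \sum_{F\in\mathcal F_h}(\bs n\times\bs\sigma\bs n,\llbracket\bs n\times\curl\bs v_h\rrbracket)_F.
\end{equation*}
In producing this formula, the normal-normal jump $\llbracket\bs n^\intercal\bs\sigma\bs n\rrbracket$ gets cancelled against the correction sum in \eqref{eq:weakcurldiv} because $\bs n_F\cdot\curl\bs v_h = \rot_F\bs v_h$ is single-valued on interior faces (by tangential continuity of $\bs v_h$) and vanishes on $\partial\Omega$ (since $\bs v_h\in H_0(\curl,\Omega)$).

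Both surviving terms are then killed by the DoFs of $\Sigma_{k-1,h}^{\rm tn}$. For the volume term, tracelessness of $\bs\sigma$ gives $(\bs\sigma,\grad\curl\bs v_h)_T = (\bs\sigma,\dev(\grad\curl\bs v_h))_T$; since $\bs v_h|_T\in\mathcal N_{k,\ell}^c(T)$ satisfies $\curl\bs v_h|_T\in\mathbb P_{k-1}(T;\mathbb R^3)$, we get $\dev(\grad\curl\bs v_h)|_T\in\mathbb P_{k-2}(T;\mathbb T)$, and the volume DoF \eqref{eq:curdivfemdof2} (with $k$ replaced by $k-1$) annihilates it. For the face term, writing the tangent vectors in the orthonormal frame $\{\bs t_1,\bs t_2\}$ reduces the pairing to $\sum_{i=1,2}(\bs t_i^\intercal\bs\sigma\bs n,\llbracket\bs t_i\cdot\curl\bs v_h\rrbracket)_F$; on every face $\bs t_i\cdot\curl\bs v_h|_T\in\mathbb P_{k-1}(F)$ and hence its jump lies in $\mathbb P_{k-1}(F)$, so the face DoF \eqref{eq:curdivfemdof1} (with $k$ replaced by $k-1$) applied to $\bs\sigma$ makes each summand vanish.

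The main obstacle is the jump-and-trace bookkeeping in step (ii): $\Sigma^{\rm tn}$ continuity only fixes the tangential trace $\bs n\times\bs\sigma\bs n$, leaving $\bs n^\intercal\bs\sigma\bs n$ generally discontinuous, so one has to carefully invoke the tangential continuity of $\bs v_h$ (to cancel the normal-normal piece with the correction sum in \eqref{eq:weakcurldiv}) together with the homogeneous boundary condition (to discard boundary contributions of that piece). Once the displayed identity is in hand, the DoF annihilations in step (iii) are immediate polynomial-degree matching between $\grad\curl$ of a Nédélec function and the testing spaces of the DoFs.
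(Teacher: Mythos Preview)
Your proof is correct and follows essentially the same route as the paper: reduce to showing $\langle(\curl\div)_w(\bs\tau-I_h^{\rm tn}\bs\tau),\bs v_h\rangle=0$, rewrite this via the element-wise integration by parts as in \eqref{eq:curldivhcurldivw}, and then annihilate the volume and face terms using the DoFs \eqref{eq:curdivfemdof1}--\eqref{eq:curdivfemdof2} together with the polynomial-degree matching $\grad\curl\bs v_h|_T\in\mathbb P_{k-2}(T;\mathbb T)$ and $\bs n\times\curl\bs v_h|_F\in\mathbb P_{k-1}(F;\mathbb R^2)$. Your extra bookkeeping about the normal-normal jump cancellation and the boundary terms is accurate but not strictly needed, since the identity in the third line of \eqref{eq:curldivhcurldivw} already packages that step.
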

\begin{proof}
By definition,
\begin{equation*}
\sum_{T\in \mathcal{T}_{h}}-(\boldsymbol{\tau}-I_h^{\rm tn}\boldsymbol{\tau}, \grad\curl\boldsymbol{v}_h)_T + (\boldsymbol{n}\times(\boldsymbol{\tau}-I_h^{\rm tn}\boldsymbol{\tau})\boldsymbol{n}, \boldsymbol{n}\times\curl\boldsymbol{v}_h)_{\partial T}=0,
\end{equation*}
which is true due to DoFs \eqref{eq:curdivfemdof1}-\eqref{eq:curdivfemdof2} and the fact $\grad\curl\boldsymbol{v}_h|_T\in \mathbb P_{k-2}(T;\mathbb R^3)$ and $\boldsymbol{n}\times\curl\boldsymbol{v}_h|_F\in \mathbb P_{k-1}(F;\mathbb R^2)$.
\end{proof}

We introduce the interpolation operator $I_h^{\curl}: V_0^{\curl}\to \mathring{\mathbb V}_{(k,\ell),h}^{\curl}$ defined by DoF~\eqref{eq:Nedelec} and denote by $\bs v_I = I_h^{\curl}\bs v$.

\begin{lemma}\label{lem:interpolation}
For $\bs v\in V_0^{\curl}$ and $\boldsymbol{\tau}_h\in\Sigma^{\rm tn}_{k-1,h}$, it holds
\begin{equation}\label{eq:curdivhIhcurl}
\langle (\curl\div)_w\boldsymbol{\tau}_h, \bs v\rangle = ((\curl\div)_h \boldsymbol{\tau}_h, I_h^{\curl} \bs v).
\end{equation}
\end{lemma}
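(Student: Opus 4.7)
The plan is to use the definition \eqref{eq:curldivhcurldivw} of $(\curl\div)_h$ to reduce \eqref{eq:curdivhIhcurl} to the orthogonality $\langle(\curl\div)_w\boldsymbol{\tau}_h, \bs v - I_h^{\curl}\bs v\rangle = 0$, and then to verify this by integration by parts and matching the resulting polynomials against the DoF test spaces of $I_h^{\curl}$. Indeed, since $I_h^{\curl}\bs v \in \mathring{\mathbb V}_{(k,\ell),h}^{\curl}$, \eqref{eq:curldivhcurldivw} gives $((\curl\div)_h\boldsymbol{\tau}_h, I_h^{\curl}\bs v) = \langle(\curl\div)_w\boldsymbol{\tau}_h, I_h^{\curl}\bs v\rangle$, so the claim is equivalent to the displayed orthogonality. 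Writing $\bs w := \bs v - I_h^{\curl}\bs v$ and unfolding \eqref{eq:weakcurldiv}, this splits into a volume sum $\sum_T(\div\boldsymbol{\tau}_h, \curl\bs w)_T$ and a face sum $-\sum_{F\in\mathring{\mathcal F}_h}(\llbracket\bs n^\intercal\boldsymbol{\tau}_h\bs n\rrbracket, \rot_F\bs w)_F$, and I would show that both vanish separately.

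For the volume sum, I would integrate by parts elementwise to write $(\div\boldsymbol{\tau}_h, \curl\bs w)_T = (\curl\div\boldsymbol{\tau}_h, \bs w)_T + \int_{\partial T}(\div\boldsymbol{\tau}_h\times\bs n_T)\cdot\bs w\,\dd S$. Because $\div\boldsymbol{\tau}_h|_T \in \mathbb P_{k-2}(T;\mathbb R^3)$, the interior integrand $\curl\div\boldsymbol{\tau}_h$ lies in $\curl\mathbb P_{k-2}(T;\mathbb R^3)$, which is contained in the volume test space \eqref{Hcurlfem3ddof3}, so the volume DoFs of $I_h^{\curl}$ annihilate this part. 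Reassembling the boundary pieces face-by-face and noting that boundary faces contribute zero since $\Pi_F\bs w = 0$ on $\partial\Omega$, the remaining expression is $\sum_{F\in\mathring{\mathcal F}_h}\int_F(\llbracket\div\boldsymbol{\tau}_h\rrbracket\times\bs n_F)\cdot\Pi_F\bs w\,\dd S$. The tangential polynomial $\llbracket\div\boldsymbol{\tau}_h\rrbracket\times\bs n_F$ has degree at most $k-2$, and via the classical decomposition $\mathbb P_{k-2}(F;\mathbb R^2) = \curl_F\mathbb P_{k-1}(F)\oplus\bs x\mathbb P_{k-3}(F)$ it belongs to the face test space \eqref{Hcurlfem3ddof2} for both $\ell=k-1$ and $\ell=k$, so these integrals vanish by the face DoFs of $I_h^{\curl}$.

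For the face sum, since $\rot_F\bs w$ depends only on the single-valued tangential trace $\Pi_F\bs w$, I apply the surface Stokes identity $\int_F q\,\rot_F\bs u = -\int_F(\curl_F q)\cdot\bs u + \int_{\partial F} q\,(\bs u\cdot\bs t_{\partial F})\,\dd l$ with $q = \llbracket\bs n^\intercal\boldsymbol{\tau}_h\bs n\rrbracket \in \mathbb P_{k-1}(F)$ and $\bs u = \Pi_F\bs w$. The surface integral vanishes because $\curl_F q \in \curl_F\mathbb P_{k-1}(F)$ is again in the face test space \eqref{Hcurlfem3ddof2}. The edge contributions along $\partial F$, reassembled edge-by-edge using that $\bs w\cdot\bs t_e$ is single-valued on each edge $e$, reduce to $\sum_{e\in\mathcal E_h}\int_e(\bs w\cdot\bs t_e)\,p_e\,\dd l$ with $p_e\in\mathbb P_{k-1}(e)\subset\mathbb P_\ell(e)$, which vanish by the edge DoFs \eqref{Hcurlfem3ddof1} since $\ell\ge k-1$. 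The main obstacle is this polynomial-degree bookkeeping and verifying it uniformly in the first-kind ($\ell=k-1$) and second-kind ($\ell=k$) N\'ed\'elec cases; once the tangential decomposition $\mathbb P_{k-2}(F;\mathbb R^2) = \curl_F\mathbb P_{k-1}(F)\oplus\bs x\mathbb P_{k-3}(F)$ is at hand, everything else is careful integration by parts and tracking of jumps and normal orientations.
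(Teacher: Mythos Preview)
Your proposal is correct and follows essentially the same approach as the paper: both arguments integrate by parts to reduce $\langle(\curl\div)_w\boldsymbol{\tau}_h,\bs v-\bs v_I\rangle$ to volume, face, and edge pairings, and then match the resulting polynomial test functions against the N\'ed\'elec DoF spaces \eqref{Hcurlfem3ddof1}--\eqref{Hcurlfem3ddof3}. The only difference is organizational: the paper performs the integration by parts in one pass to display a single expression with volume terms $(\curl\div\boldsymbol{\tau}_h,\bs v)_T$, face terms involving $[\bs n\times\div\boldsymbol{\tau}_h+\curl_F(\bs n^{\intercal}\boldsymbol{\tau}_h\bs n)]$, and edge terms $[\bs n^{\intercal}\boldsymbol{\tau}_h\bs n]_e$, whereas you treat the two sums in \eqref{eq:weakcurldiv} separately before reassembling face- and edge-wise.
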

\begin{proof}
By integration by parts and $\bs \tau_h\mid_T \in \mathbb P_{k-1}(T; \mathbb{T})$, we have
\begin{align*}
&\quad\langle (\curl\div)_w\boldsymbol{\tau}_h, \bs v \rangle \\
= &\sum_{T\in \mathcal T_h} (\curl \div \bs \tau_h, \bs v)_T - \sum_{F\in \mathring{\mathcal{F}}_{h}}([\bs n\times \div \bs \tau_h + \curl_F(\bs n^{\intercal}\bs \tau_h \bs n)], \bs v)_{F}\\
&- \sum_{e\in\mathring{\mathcal{E}}_{h}}([\boldsymbol{n}^{\intercal}\boldsymbol{\tau}_h\boldsymbol{n}]_e, \boldsymbol{v}\cdot\boldsymbol{t}_{e})_{e}\\
= &\sum_{T\in \mathcal T_h} (\curl \div \bs \tau_h, \bs v_I)_T  - \sum_{F\in \mathring{\mathcal{F}}_{h}}([\bs n\times \div \bs \tau_h + \curl_F(\bs n^{\intercal}\bs \tau_h \bs n)], \bs v_I)_{F}\\
&-\sum_{e\in\mathring{\mathcal{E}}_{h}}([\boldsymbol{n}^{\intercal}\boldsymbol{\tau}_h\boldsymbol{n}]_e, \boldsymbol{v}_I\cdot\boldsymbol{t}_{e})_{e} = ((\curl\div)_h \boldsymbol{\tau}_h, \bs v_I),
\end{align*} 
where
$\displaystyle
[\boldsymbol{n}^{\intercal}\boldsymbol{\tau}_h\boldsymbol{n}]_e=\sum_{T\in\mathcal T_h}\sum_{F\in\mathcal F(T), e\subset\partial F}(\boldsymbol{n}^{\intercal}\boldsymbol{\tau}_h|_T\boldsymbol{n})|_e(\boldsymbol{t}_e\cdot\boldsymbol{t}_{F,e}).
$
\end{proof}

\begin{corollary}\label{lem:curldivhkernel}
Let $\boldsymbol{\tau}_h\in{\Sigma}^{\rm tn}_{k-1,h}$ satisfy $(\curl\div)_h\boldsymbol{\tau}_h=0$. Then we have $\curl\div\boldsymbol{\tau}_h=0$ and $\boldsymbol{\tau}_h\in H(\operatorname{curl} \div, \Omega ; \mathbb{T})$.
\end{corollary}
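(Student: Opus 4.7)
The plan is to combine Lemma~\ref{lem:interpolation} with Lemma~\ref{lem:curldivw} to transfer the discrete vanishing condition to a distributional one.

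First I would apply Lemma~\ref{lem:interpolation} directly: for every $\bs v\in V_0^{\curl}$, it gives the identity $\langle (\curl\div)_w\bs\tau_h, \bs v\rangle = ((\curl\div)_h\bs\tau_h, I_h^{\curl}\bs v)$. The hypothesis $(\curl\div)_h\bs\tau_h=0$ then kills the right-hand side, so $(\curl\div)_w\bs\tau_h$ vanishes as a linear functional on all of $V_0^{\curl}$, not merely on the discrete subspace $\mathring{\mathbb V}_{(k,\ell),h}^{\curl}$ where it was originally defined to vanish.

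Next I would restrict the test functions to $C_0^{\infty}(\Omega;\mathbb R^3)$. This subspace is contained in $V_0^{\curl}$ and in $H(\grad\curl,\Omega)$, so Lemma~\ref{lem:curldivw} identifies $(\curl\div)_w\bs\tau_h$ with the distributional $\curl\div\bs\tau_h$. Combining the two steps, $\langle \curl\div\bs\tau_h,\bs v\rangle=0$ for every $\bs v\in C_0^{\infty}(\Omega;\mathbb R^3)$, which forces $\curl\div\bs\tau_h=0$ in the sense of distributions. Since the zero distribution is trivially in $L^2(\Omega;\mathbb R^3)$, we conclude $\bs\tau_h\in H(\curl\div,\Omega;\mathbb T)$.

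There is no real obstacle here: the two lemmas above are essentially designed to fit together in exactly this way. The only subtle point to be careful about is the logical direction — Lemma~\ref{lem:interpolation} is what allows us to promote the vanishing from the pairing against discrete test functions $\bs v_h\in \mathring{\mathbb V}_{(k,\ell),h}^{\curl}$ (which is what $(\curl\div)_h\bs\tau_h=0$ literally means) to the vanishing against all smooth compactly supported $\bs v$, which is precisely what is needed to invoke Lemma~\ref{lem:curldivw} and identify $(\curl\div)_w$ with the distributional operator.
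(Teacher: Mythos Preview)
Your proof is correct and follows essentially the same approach as the paper: invoke Lemma~\ref{lem:interpolation} to pass from the discrete vanishing $(\curl\div)_h\boldsymbol{\tau}_h=0$ to $\langle(\curl\div)_w\boldsymbol{\tau}_h,\boldsymbol{v}\rangle=0$ for all $\boldsymbol{v}\in V_0^{\curl}$, then use Lemma~\ref{lem:curldivw} to conclude $\curl\div\boldsymbol{\tau}_h=0$ in the distribution sense. The paper's proof is just a one-line citation of these two lemmas, and your write-up spells out the logical flow in the same order.
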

\begin{proof}
Let $\boldsymbol{\tau}_h\in{\Sigma}^{\rm tn}_h$ satisfy $(\curl\div)_h \boldsymbol{\tau}_h=0$. By Lemma \ref{lem:interpolation} and Lemma~\ref{lem:curldivw}, $\curl\div\boldsymbol{\tau}_h=0$ in the distribution sense. 
\end{proof}

At first glance, the null space $\Sigma_{k-1,h}^{\rm tn}\cap\ker((\curl\div)_h)$ is larger than the null space $\Sigma_{k-1,h}^{\rm tn}\cap\ker(\curl\div)$ as the test function is $ \mathring{\mathbb V}_{(k,\ell),h}^{\curl}$ not $C_0^{\infty}$. However Lemma~\ref{lem:interpolation} implies that they are the same due to the design of finite element spaces and weak differential operators. This is in the same spirit of Hellan-Herrmann-Johnson (HHJ) element~\cite{Hellan1967,Herrmann1967,Johnson1973,ChenHuHuang2018a} in 2D.

\begin{lemma}
It holds
\begin{equation}\label{eq:curdivhskwcurlcurlh}
(\curl\div)_h(I_h^{\rm tn}(\mskw\boldsymbol{u}_h))=-(\curl\curl)_h\boldsymbol{u}_h,\quad \boldsymbol{u}_h\in K_h^c.
\end{equation}
\end{lemma}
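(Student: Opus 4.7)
The plan is to test both sides of \eqref{eq:curdivhskwcurlcurlh} against an arbitrary $\bs v_h \in K_h^c$ and compare using the defining identity \eqref{eq:curdivhcdprop} together with the divergence-curl identity $\div \mskw \bs u_h = -\curl \bs u_h$ from \eqref{eq:divmskw}. The main preliminary step is to check that $\mskw \bs u_h \in \Sigma^{\rm tn}$ so that \eqref{eq:curdivhcdprop} actually applies. This follows because $\mskw \bs u_h$ is antisymmetric (hence traceless) and piecewise smooth, and because the identity $(\mskw \bs u_h)\bs n = \bs u_h \times \bs n$ gives $\bs n \times \mskw \bs u_h \bs n = \bs n \times (\bs u_h \times \bs n) = \Pi_F \bs u_h$, which is continuous across interior faces since $\bs u_h \in H_0(\curl,\Omega)$.

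Next, for $\bs v_h \in K_h^c$, I would apply \eqref{eq:curdivhcdprop} with $\bs \tau = \mskw \bs u_h$ to get
\begin{equation*}
((\curl\div)_h(I_h^{\rm tn}\mskw\bs u_h), \bs v_h) = \langle (\curl\div)_w \mskw \bs u_h, \bs v_h\rangle,
\end{equation*}
and then expand the right-hand side by the definition \eqref{eq:weakcurldiv}. The jump term disappears because $\bs n^{\intercal} \mskw \bs u_h \bs n = \bs n \cdot (\bs u_h \times \bs n) = 0$, leaving only the volume term, in which $\div \mskw \bs u_h = -\curl \bs u_h$ gives
\begin{equation*}
\langle (\curl\div)_w \mskw \bs u_h, \bs v_h\rangle = -\sum_{T \in \mathcal T_h}(\curl \bs u_h, \curl \bs v_h)_T = -(\curl \bs u_h, \curl \bs v_h),
\end{equation*}
which by the definition \eqref{eq:curcurlh} equals $-((\curl\curl)_h \bs u_h, \bs v_h)$.

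Finally, I would argue the equality in $\mathring{\mathbb V}_{(k,\ell),h}^{\curl}$ by noting that both sides lie in $K_h^c$: the right-hand side by the codomain of $(\curl\curl)_h$, and the left-hand side because $\div_h \circ (\curl\div)_h = 0$ has already been verified in the proof that \eqref{femcurldivcomplex} is a complex. Since testing against all of $K_h^c$ produces equal pairings and $(\cdot,\cdot)$ is an inner product on $K_h^c$, the identity \eqref{eq:curdivhskwcurlcurlh} follows. There is no real obstacle here; the only place that requires care is the verification $\mskw \bs u_h \in \Sigma^{\rm tn}$ and the vanishing of the face jump, both of which rely on the algebraic identities for $\mskw$ combined with the tangential continuity inherent to $H_0(\curl,\Omega)$.
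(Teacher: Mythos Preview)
Your proof is correct and follows essentially the same route as the paper's: verify $\mskw\bs u_h\in\Sigma^{\rm tn}$ via $(\mskw\bs u_h)\bs n=\bs u_h\times\bs n$, apply \eqref{eq:curdivhcdprop}, and reduce $\langle(\curl\div)_w\mskw\bs u_h,\bs v_h\rangle$ to $-(\curl\bs u_h,\curl\bs v_h)$ using $\div\mskw=-\curl$. You are in fact more explicit than the paper on two points it leaves implicit: the vanishing of the face-jump term (because $\bs n^{\intercal}\mskw\bs u_h\bs n=0$) and the observation that both sides lie in $K_h^c$ so that testing against $K_h^c$ suffices to conclude equality.
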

\begin{proof}
Since $(\mskw\boldsymbol{u}_h)\boldsymbol{n}=\boldsymbol{u}_h\times\boldsymbol{n}$, it follows that $\mskw\boldsymbol{u}_h\in\Sigma^{\rm tn}$. By the fact $\div\mskw\boldsymbol{u}_h=-\curl\boldsymbol{u}_h$,
we have
\begin{equation*}
\langle (\curl\div)_w(\mskw\boldsymbol{u}_h),\boldsymbol{v_h}\rangle =(\div\mskw\boldsymbol{u}_h,\curl\boldsymbol{v_h})=-(\curl\boldsymbol{u}_h,\curl\boldsymbol{v_h}).
\end{equation*}
Then the result holds from \eqref{eq:curdivhcdprop}.
\end{proof}

With complex \eqref{femcurldivcomplex} and identity \eqref{eq:curdivhskwcurlcurlh}, we have the commutative diagram
\begin{equation}\label{eq:triangulardiagram}
\begin{array}{c}
\xymatrix@R-=1.6pc@C+0.5pc{
\mathbb V_{k,h}^{\curl}(\mathbb{M}) \ar[r]^-{\dev\curl} & \Sigma^{\rm tn}_{k-1,h} \ar[r]^-{(\curl\div)_h} & K_{h}^c\ar[r]^-{} &0  \\
& &K_{h}^c\ar[u]_{(\curl\curl)_h}\ar[lu]^{-I_h^{\rm tn}\mskw}.
} 
\end{array}
\end{equation}

\begin{corollary}\label{cor:kerdivh}
It holds
\begin{equation}\label{eq:curldivhonto}
(\curl\div)_h\Sigma_{k-1,h}^{\rm tn}=K_{h}^c=\mathring{\mathbb V}_{(k,\ell),h}^{\curl} \cap \ker (\div_h).
\end{equation}
\end{corollary}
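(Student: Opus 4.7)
The plan is to establish the two equalities separately. The second equality $K_h^c=\mathring{\mathbb V}_{(k,\ell),h}^{\curl}\cap\ker(\div_h)$ is simply the definition of $K_h^c$, so nothing needs to be shown there. The content lies entirely in identifying $(\curl\div)_h\Sigma_{k-1,h}^{\rm tn}$ with $K_h^c$, which I would obtain by two opposite inclusions.

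For the forward inclusion, I would note that the complex property $\div_h\circ(\curl\div)_h=0$ has already been verified in the course of proving that \eqref{femcurldivcomplex} is a complex (this is just $\curl\grad=0$ combined with the duality definitions of $(\curl\div)_h$ and $\div_h$). Therefore
$(\curl\div)_h\Sigma_{k-1,h}^{\rm tn}\subset\ker(\div_h)\cap\mathring{\mathbb V}_{(k,\ell),h}^{\curl}=K_h^c.$

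For the reverse inclusion, the key tool is the triangular commutative diagram \eqref{eq:triangulardiagram}, which encodes the identity \eqref{eq:curdivhskwcurlcurlh}: for every $\bs w_h\in K_h^c$, one has $(\curl\div)_h(I_h^{\rm tn}(\mskw\bs w_h))=-(\curl\curl)_h\bs w_h$. The discrete Poincaré inequality \eqref{eq:curlcurlhpoincare} shows that $(\curl\curl)_h$ is injective on $K_h^c$, and since $K_h^c$ is finite-dimensional, $(\curl\curl)_h:K_h^c\to K_h^c$ is an isomorphism. Thus, given any $\bs u_h\in K_h^c$, I can pick $\bs w_h\in K_h^c$ with $(\curl\curl)_h\bs w_h=-\bs u_h$ and set $\bs\tau_h:=I_h^{\rm tn}(\mskw\bs w_h)\in\Sigma_{k-1,h}^{\rm tn}$; the identity then yields $(\curl\div)_h\bs\tau_h=\bs u_h$.

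The main obstacle is not in the corollary itself, which is essentially a one-line diagram-chase, but in the supporting machinery already assembled above: verifying $\mskw\bs u_h\in\Sigma^{\rm tn}$ via $(\mskw\bs u_h)\bs n=\bs u_h\times\bs n$, establishing the identity \eqref{eq:curdivhskwcurlcurlh} from the definition of the weak operator, and obtaining the Poincaré inequality that promotes $(\curl\curl)_h$ to an isomorphism. With those ingredients in hand, the assertion is immediate. I would present the proof as a short composition of these two inclusions, citing \eqref{eq:triangulardiagram} and \eqref{eq:curlcurlhpoincare} for the nontrivial half.
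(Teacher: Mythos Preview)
Your proposal is correct and follows essentially the same argument as the paper: the forward inclusion is the complex property $\div_h\circ(\curl\div)_h=0$, and the reverse inclusion is obtained by inverting $(\curl\curl)_h$ on $K_h^c$ (licensed by the discrete Poincar\'e inequality) and applying the identity \eqref{eq:curdivhskwcurlcurlh} to produce a preimage $\bs\tau_h=-I_h^{\rm tn}(\mskw\bs u_h)$. The only cosmetic difference is the naming of the target versus the preimage element of $K_h^c$.
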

\begin{proof}
It is straightforward to verify that
$(\curl\div)_h{\Sigma}^{\rm tn}_h\subseteq K_{h}^c$.
On the other side, take $\boldsymbol{w}_h\in K_{h}^c$.
Let $\boldsymbol{u}_h=(\curl\curl)_h^{-1}\boldsymbol{w}_h\in K_{h}^c$ and set $\boldsymbol{\tau}_h=-I_h^{\rm tn}(\mskw\boldsymbol{u}_h)\in{\Sigma}^{\rm tn}_h$.
By~\eqref{eq:curdivhskwcurlcurlh}, 
\begin{equation*}
(\curl\div)_h\boldsymbol{\tau}_h=(\curl\curl)_h\boldsymbol{u}_h=\boldsymbol{w}_h,
\end{equation*}
which ends the proof.
\end{proof}

\begin{corollary}\label{cor:kercurldivh}
We have
\begin{equation}\label{eq:lmkercurldiv}
\dev\curl\mathbb V_{k,h}^{\curl}(\mathbb{M})=\Sigma_{k-1,h}^{\rm tn}\cap\ker((\curl\div)_h).
\end{equation}
\end{corollary}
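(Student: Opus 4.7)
The inclusion $\dev\curl\mathbb V_{k,h}^{\curl}(\mathbb{M})\subseteq\Sigma_{k-1,h}^{\rm tn}\cap\ker((\curl\div)_h)$ was already verified during the proof of the theorem (from $\curl\div\dev\curl=0$ in the distribution sense together with \eqref{eq:curldivhcurldivw}), so the real task is the reverse inclusion. Fix $\bs\tau_h\in\Sigma_{k-1,h}^{\rm tn}\cap\ker((\curl\div)_h)$. The plan is to first lift $\bs\tau_h$ to a continuous potential and then pull back to the finite element space by interpolation. Lemma~\ref{lem:curldivhkernel} applied to our hypothesis yields $\bs\tau_h\in H(\curl\div,\Omega;\mathbb T)$ with $\curl\div\bs\tau_h=0$ distributionally, so in particular $\bs\tau_h\in H^{-1}(\curl\div,\Omega;\mathbb T)\cap\ker(\curl\div)$. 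The exactness of the continuous distributional $\curl\div$ complex~\eqref{distribcurldivcomplex}, specifically the argument used in Step~2 of its proof, then produces $\bs\sigma\in H^1(\Omega;\mathbb M)$ with $\dev\curl\bs\sigma=\bs\tau_h$.

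I would then set $\bs\sigma_h:=I_h^{\curl}\bs\sigma\in\mathbb V_{k,h}^{\curl}(\mathbb M)$, where $I_h^{\curl}$ is the row-wise canonical second-kind N\'ed\'elec interpolation (well defined on $H^1$), and claim $\dev\curl\bs\sigma_h=\bs\tau_h$; this delivers $\bs\tau_h\in\dev\curl\mathbb V_{k,h}^{\curl}(\mathbb M)$ and finishes the reverse inclusion. The main obstacle is establishing this claim. Since both sides lie in $\Sigma_{k-1,h}^{\rm tn}$, the strategy is to match the DoFs in~\eqref{eq:curldivdof}. For the face DoFs~\eqref{eq:curdivfemdof1}, $\bs t_i\perp\bs n$ gives $\bs t_i^{\intercal}\dev\bs M\bs n=\bs t_i^{\intercal}\bs M\bs n$, so the matching reduces to row-wise face moments of $\bs n\cdot\curl(\cdot)$, which coincide by the standard commuting identity $\curl\,I_h^{\curl}=I_h^{\rm BDM}\,\curl$ (valid on $H^1$ inputs) combined with the face-moment preservation of the BDM interpolation. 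For the interior DoFs~\eqref{eq:curdivfemdof2}, the traceless structure of the test functions absorbs the scalar correction $\tfrac{1}{3}(\tr\curl\bs\sigma)\bs I$ relating $\curl\bs\sigma$ to $\bs\tau_h$, after which BDM's interior-moment preservation supplies the equality.

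Should the DoF bookkeeping at the tangential-normal/normal interface prove delicate, a cleaner fallback is a dimension-counting argument. Because exactness has already been shown at every other position of~\eqref{femcurldivcomplex}, exactness at $\Sigma_{k-1,h}^{\rm tn}$ is equivalent to an Euler-characteristic identity for the discrete complex, which can be verified by summing local polynomial dimensions with inclusion--exclusion over the mesh and matching the vanishing Euler characteristic of the continuous complex on a topologically trivial $\Omega$.
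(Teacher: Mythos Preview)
Your primary route---lifting $\bs\tau_h$ to a continuous potential $\bs\sigma\in H^1(\Omega;\mathbb M)$ and pulling it back via the canonical N\'ed\'elec interpolation---is exactly the alternative argument the paper sketches in the Remark immediately following this corollary. However, your claim that the row-wise second-kind N\'ed\'elec interpolation is ``well defined on $H^1$'' is not correct: the edge DoFs~\eqref{Hcurlfem3ddof1} involve line integrals along $1$-dimensional edges, and a generic $H^1$ function in three dimensions has no trace there. The paper's Remark flags precisely this difficulty (``The edge moment is not well defined for $H^1$ function but may be fixed by the fact $\dev\curl\bs\sigma=\bs\tau_h$ has extra smoothness'') without giving a complete fix, so as written your primary argument has the same gap. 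Your subsequent DoF-matching via the BDM commuting identity concerns only face and interior moments of $\curl\bs\sigma$, but those cannot help until $\bs\sigma_h$ itself has been legitimately constructed.

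Your fallback---verifying exactness at $\Sigma_{k-1,h}^{\rm tn}$ by an Euler-characteristic/dimension identity once the surrounding positions of the complex are exact---is in fact the paper's \emph{main} proof. The paper computes $\dim\Sigma_{k-1,h}^{\rm tn}\cap\ker((\curl\div)_h)$ and $\dim\dev\curl\mathbb V_{k,h}^{\curl}(\mathbb M)$ explicitly from~\eqref{eq:divhonto} and~\eqref{eq:curldivhonto} and the known dimensions at the other nodes, and closes the count with the combinatorial identities $4|\mathcal T_h|=|\mathcal F_h|+|\mathring{\mathcal F}_h|$ and the Euler formulas for $\mathcal T_h$ and its interior skeleton. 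So your fallback is the right approach here, and you should promote it to the main argument.
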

\begin{proof}
By complex \eqref{femcurldivcomplex}, $\dev\curl\mathbb V_{k,h}^{\curl}(\mathbb{M})\subseteq\Sigma_{k-1,h}^{\rm tn}\cap\ker((\curl\div)_h)$. Then
we prove~\eqref{eq:lmkercurldiv} by dimension count. By~\eqref{eq:divhonto} and \eqref{eq:curldivhonto}, 
\begin{align*}
\dim\Sigma_{k-1,h}^{\rm tn}\cap\ker((\curl\div)_h)&=\dim\Sigma_{k-1,h}^{\rm tn}-\dim\mathring{\mathbb V}_{(k,k-1),h}^{\curl}+\dim\mathring{\mathbb V}_{k, h}^{\grad} \\
&=|\mathring{\mathcal V}_h|-|\mathring{\mathcal E}_h| + 2{k+1\choose2}|{\mathcal F}_h|-{k+1\choose2}|\mathring{\mathcal F}_h|+|\mathring{\mathcal F}_h| \\
&\quad+ |\mathcal T_h|\bigg(8{k+1\choose3}-3{k\choose3}+{k-1\choose3}\bigg).
\end{align*}
Similarly,
\begin{align*}
\dim\dev\curl\mathbb V_{k,h}^{\curl}(\mathbb{M})&=\dim\mathbb V_{k,h}^{\curl}(\mathbb{M})-\dim\mathbb V_{k+1,h}^{\grad}(\mathbb{R}^{3})+2 \\
&=-3|{\mathcal V}_h|+3|{\mathcal E}_h|+3{k+1\choose2}|{\mathcal F}_h|-3|{\mathcal F}_h|\\
&\quad+|{\mathcal T}_h|\bigg(9{k\choose3}-3{k-1\choose3}\bigg) +2.
\end{align*}
Combine the last two identities to get
\begin{align*}
&\quad \dim\Sigma_{k-1,h}^{\rm tn}\cap\ker((\curl\div)_h)-\dim\dev\curl\mathbb V_{k,h}^{\curl}(\mathbb{M}) \\
&=|\mathring{\mathcal V}_h|+3|{\mathcal V}_h|-|\mathring{\mathcal E}_h|-3|{\mathcal E}_h|+|\mathring{\mathcal F}_h|+3|{\mathcal F}_h|-4|{\mathcal T}_h|-2 \\
&\quad +{k+1\choose2}(4|{\mathcal T}_h|-|{\mathcal F}_h|-|\mathring{\mathcal F}_h|).
\end{align*}
Finally, we conclude the result from $4|{\mathcal T}_h|=|{\mathcal F}_h|+|\mathring{\mathcal F}_h|$, and the Euler's formulas $|{\mathcal V}_h|-|{\mathcal E}_h|+|{\mathcal F}_h|-|{\mathcal T}_h|=1$ and $|\mathring{\mathcal V}_h|-|\mathring{\mathcal E}_h|+|\mathring{\mathcal F}_h|-|{\mathcal T}_h|=-1$.
\end{proof}

\begin{remark}\rm 
Here is another proof of \eqref{eq:lmkercurldiv}. Take $\boldsymbol{\tau}_{h}\in\Sigma^{\rm tn}_{k-1,h}\cap\ker(\!(\curl\div)_h)$.  By Corollary~\ref{lem:curldivhkernel}, we have 
  $\curl\div\boldsymbol{\tau}_h=0$. Therefore $\boldsymbol{\tau}_h=\dev\curl\boldsymbol{\sigma}$ with $\boldsymbol{\sigma}\in H^1(\Omega ; \mathbb{M})$. Let $\boldsymbol{\sigma}_I\in \mathbb V_h^{\curl}(\mathbb{M})$ be the interpolation of $\boldsymbol{\sigma}$ based on DoFs \eqref{Hcurlfem3ddof1}-\eqref{Hcurlfem3ddof3}. The edge moment is not well defined for $H^1$ function but may be fixed by the fact $\dev\curl\boldsymbol{\sigma} = \bs \tau_h$ has extra smoothness. It is easy to verify that all the DoFs \eqref{eq:curdivfemdof1}-\eqref{eq:curdivfemdof2} of $\boldsymbol{\tau}_h-\dev\curl\boldsymbol{\sigma}_I$ vanish. Therefore $\boldsymbol{\tau}_h=\dev\curl\boldsymbol{\sigma}_I \in \dev\curl\mathbb V_h^{\curl}(\mathbb{M})$.   
\end{remark}


\subsection{Helmholtz decompositions}
The right half of the distributional finite element $\curl\div$ complex \eqref{femcurldivcomplex} is listed below
\begin{equation*}
\mathbb V_{k,h}^{\curl}(\mathbb{M})\xrightarrow{\dev\curl} \Sigma_{k-1,h}^{\rm tn}
 \xrightarrow{(\curl\div)_h} \mathring{\mathbb V}_{(k,\ell), h}^{\curl} \xrightarrow{\div_h} \mathring{\mathbb V}_{\ell+1, h}^{\grad} \rightarrow 0, \quad \ell = k \text{ or } k-1.
\end{equation*}
By taking the dual, we have the short exact sequence  
\begin{equation*}
0\leftarrow (\grad\curl)_h K_{h}^c
 \xleftarrow{(\grad\curl)_h} \mathring{\mathbb V}_{(k,\ell), h}^{\curl} = K_h^c \oplus \grad (\mathring{\mathbb V}_{\ell+1, h}^{\grad})  \xleftarrow{\grad} \mathring{\mathbb V}_{\ell+1, h}^{\grad} \leftarrow 0.
\end{equation*}


By the framework in~\cite{ChenHuang2018}, we get the following Helmholtz decompositions from the last two complexes, commutative diagram \eqref{eq:triangulardiagram} and \eqref{eq:curdivhskwcurlcurlh}.
\begin{corollary}
We have the discrete Helmholtz decompositions
\begin{align*}
\Sigma_{k-1,h}^{\rm tn} &=\dev\curl \mathbb V_{k,h}^{\curl}(\mathbb{M})\oplus^{L^2} (\grad\curl)_h K_{h}^c,\\
\Sigma_{k-1,h}^{\rm tn} &=\dev\curl \mathbb V_{k,h}^{\curl}(\mathbb{M})\oplus I_h^{\rm tn}(\mskw K_{h}^c).
\end{align*}
\end{corollary}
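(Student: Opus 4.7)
The plan is to derive both decompositions from three already-established facts: the characterization $\ker((\curl\div)_h)=\dev\curl\,\mathbb V_{k,h}^{\curl}(\mathbb{M})$ from Corollary \ref{cor:kercurldivh}, the surjectivity $(\curl\div)_h\Sigma_{k-1,h}^{\rm tn}=K_{h}^c$ from Corollary \ref{cor:kerdivh}, and the commutative diagram \eqref{eq:triangulardiagram} together with the adjoint relation \eqref{eq:curldivhcurldivw}. This is exactly the abstract setting of \cite{ChenHuang2018}: a surjective operator between finite-dimensional inner-product spaces yields an $L^2$-orthogonal Helmholtz decomposition via its adjoint, and any right inverse (not necessarily orthogonal) yields a direct-sum decomposition.

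For the $L^2$-orthogonal decomposition, I would start from the identity \eqref{eq:curldivhcurldivw}, which shows that $(\grad\curl)_h$ is the negative $L^2$-adjoint of $(\curl\div)_h$. Combined with the surjectivity of $(\curl\div)_h$ onto $K_h^c$ and the closed range theorem at the finite-dimensional level, this gives
\begin{equation*}
\Sigma_{k-1,h}^{\rm tn}=\ker((\curl\div)_h)\oplus^{L^2}\operatorname{range}((\grad\curl)_h),
\end{equation*}
and the kernel side equals $\dev\curl\,\mathbb V_{k,h}^{\curl}(\mathbb{M})$ by Corollary \ref{cor:kercurldivh}. The point that needs a short argument is that the range of $(\grad\curl)_h$ on the full space $\mathring{\mathbb V}_{(k,\ell),h}^{\curl}$ coincides with the range of its restriction to $K_h^c$. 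This follows because $(\grad\curl)_h\grad\psi_h=0$ for $\psi_h\in\mathring{\mathbb V}_{\ell+1,h}^{\grad}$ (the defining bilinear form only sees $\curl\bs v$ and its tangential jumps), and $K_h^c$ is the $L^2$-orthogonal complement of $\grad\mathring{\mathbb V}_{\ell+1,h}^{\grad}$ inside $\mathring{\mathbb V}_{(k,\ell),h}^{\curl}$ by definition of $\div_h$.

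For the second decomposition, I would use the commutative diagram \eqref{eq:triangulardiagram} constructively. Given any $\bs\tau_h\in\Sigma_{k-1,h}^{\rm tn}$, set $\bs w_h=(\curl\div)_h\bs\tau_h\in K_h^c$ and $\bs u_h=-(\curl\curl)_h^{-1}\bs w_h\in K_h^c$, which is well defined since $(\curl\curl)_h$ is an isomorphism on $K_h^c$ by \eqref{eq:curlcurlhpoincare}. By \eqref{eq:curdivhskwcurlcurlh}, $(\curl\div)_h(I_h^{\rm tn}(\mskw\bs u_h))=-(\curl\curl)_h\bs u_h=\bs w_h$, so $\bs\tau_h-I_h^{\rm tn}(\mskw\bs u_h)$ lies in $\ker((\curl\div)_h)=\dev\curl\,\mathbb V_{k,h}^{\curl}(\mathbb{M})$. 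This produces the sum. For directness, suppose $\dev\curl\,\bs\sigma_h=I_h^{\rm tn}(\mskw\bs u_h)$ with $\bs u_h\in K_h^c$; applying $(\curl\div)_h$ and using \eqref{eq:curdivhskwcurlcurlh} gives $(\curl\curl)_h\bs u_h=0$, so $\bs u_h=0$ by the isomorphism property, and hence the intersection is trivial.

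The main obstacle is really the bookkeeping in the first part: one must confirm that $(\grad\curl)_h$ genuinely annihilates $\grad\mathring{\mathbb V}_{\ell+1,h}^{\grad}$ despite the face jump terms in its definition, so that restricting to $K_h^c$ loses no range. Once this is checked, both decompositions are immediate from the abstract framework. The second decomposition is not $L^2$-orthogonal because $I_h^{\rm tn}\circ\mskw$ is not related to the adjoint of $(\curl\div)_h$; it is only a right inverse modulo the kernel, which is precisely the role emphasized by the triangular diagram \eqref{eq:triangulardiagram}.
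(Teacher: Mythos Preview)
Your proposal is correct and follows essentially the same route as the paper: the paper simply invokes the abstract framework of \cite{ChenHuang2018} applied to the commutative diagram \eqref{eq:triangulardiagram} and identity \eqref{eq:curdivhskwcurlcurlh}, and what you have written is precisely the unpacking of that framework in this concrete setting. Your verification that $(\grad\curl)_h$ annihilates $\grad\mathring{\mathbb V}_{\ell+1,h}^{\grad}$ (so that the range of the adjoint is already attained on $K_h^c$) is the only detail the paper leaves implicit, and your argument for it is sound.
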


\begin{corollary}
We have the $L^2$-orthogonal Helmholtz decomposition of space $\mathring{\mathbb V}_{(k,\ell),h}^{\curl}$
\begin{align}\label{eq:helmholtz1}
\mathring{\mathbb V}_{(k,\ell),h}^{\curl} &= K_{h}^c  \oplus^{L^2}\grad \mathring{\mathbb V}_{\ell+1, h}^{\grad} = (\curl\curl)_hK_{h}^c \oplus^{L^2} \grad \mathring{\mathbb V}_{\ell+1, h}^{\grad}   \\
&= (\curl\div)_h\Sigma_{k-1,h}^{\rm tn} \oplus^{L^2}  \grad \mathring{\mathbb V}_{\ell+1, h}^{\grad} . \notag
\end{align}
\end{corollary}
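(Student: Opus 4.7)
The plan is to reduce each of the three equalities in the statement to results already in place. First, I would verify the $L^2$-orthogonal direct sum $\mathring{\mathbb V}_{(k,\ell),h}^{\curl} = K_h^c \oplus^{L^2} \grad \mathring{\mathbb V}_{\ell+1,h}^{\grad}$ purely from the definition of $\div_h$. Since $(\div_h \bs v_h, \psi_h) = -(\bs v_h, \grad \psi_h)$ for every $\psi_h \in \mathring{\mathbb V}_{\ell+1,h}^{\grad}$, the subspace $K_h^c = \mathring{\mathbb V}_{(k,\ell),h}^{\curl} \cap \ker(\div_h)$ is precisely the $L^2$-orthogonal complement of $\grad \mathring{\mathbb V}_{\ell+1,h}^{\grad}$ inside $\mathring{\mathbb V}_{(k,\ell),h}^{\curl}$. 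The inclusion $\grad \mathring{\mathbb V}_{\ell+1,h}^{\grad} \subset \mathring{\mathbb V}_{(k,\ell),h}^{\curl}$ comes from the finite element de Rham complex recalled just before \eqref{eq:divhonto}, so the standard Hilbert-space orthogonal decomposition theorem applied to the finite-dimensional space $\mathring{\mathbb V}_{(k,\ell),h}^{\curl}$ yields the desired splitting.

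Second, to replace $K_h^c$ by $(\curl\curl)_h K_h^c$, I would invoke the discrete Poincar\'e inequality \eqref{eq:curlhpoincare}, which guarantees that $(\curl\cdot,\curl\cdot)$ is an inner product on $K_h^c$. Therefore the symmetric positive-definite operator $(\curl\curl)_h: K_h^c \to K_h^c$ introduced in \eqref{eq:curcurlh} is an isomorphism, so $(\curl\curl)_h K_h^c = K_h^c$ immediately.

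Third, the identification $(\curl\div)_h \Sigma_{k-1,h}^{\rm tn} = K_h^c$ is exactly equation \eqref{eq:curldivhonto} in Corollary \ref{cor:kerdivh}, established through the commutative triangle \eqref{eq:triangulardiagram}: every $\bs w_h \in K_h^c$ admits the explicit preimage $\bs \tau_h = -I_h^{\rm tn}(\mskw (\curl\curl)_h^{-1} \bs w_h)$. Chaining the three identifications yields the full statement.

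I do not expect a genuine obstacle here. The heavy technical work, namely the discrete Poincar\'e inequality, the construction and compatibility of the weak operators via the commuting interpolation identity \eqref{eq:curdivhcdprop}, and the surjectivity statement of Corollary \ref{cor:kerdivh}, is already available. This corollary is essentially a repackaging of those ingredients into a compact Helmholtz decomposition, and the proof reduces to citing them in the right order.
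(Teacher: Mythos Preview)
Your proposal is correct and matches the paper's approach; the paper itself gives no detailed proof of this corollary, stating only that it follows from the framework in \cite{ChenHuang2018} together with the two short exact sequences, the commutative diagram \eqref{eq:triangulardiagram}, and identity \eqref{eq:curdivhskwcurlcurlh}. Your argument spells out explicitly the three reductions (orthogonality from the definition of $\div_h$, the isomorphism $(\curl\curl)_h:K_h^c\to K_h^c$ via \eqref{eq:curlhpoincare}, and the surjectivity \eqref{eq:curldivhonto}) that the cited framework would produce, so it is a faithful unpacking of what the paper leaves implicit.
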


\begin{lemma}
We have the discrete Poincar\'e	inequality
\begin{equation}\label{eq:gradcurlhpoincare}
\|\boldsymbol{v}_h\|_{H(\curl)}\lesssim \|(\grad\curl)_h\boldsymbol{v}_h\|,\quad \boldsymbol{v}_h\in K_h^c.
\end{equation}
\end{lemma}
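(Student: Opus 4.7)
The plan is to obtain the bound by testing against a carefully chosen element of $\Sigma_{k-1,h}^{\rm tn}$, namely the interpolant of $\mskw \bs v_h$, and then combining the $L^2$-adjoint relation between $(\curl\div)_h$ and $(\grad\curl)_h$ with the identity \eqref{eq:curdivhskwcurlcurlh}, together with the previously established Poincar\'e inequality \eqref{eq:curlhpoincare}.

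\smallskip
\step 1 \; Let $\bs v_h\in K_h^c$ and set $\bs \sigma_h := I_h^{\rm tn}(\mskw \bs v_h)\in \Sigma_{k-1,h}^{\rm tn}$. Using the adjoint relation
$((\curl\div)_h \bs \sigma_h,\bs v_h) = -(\bs \sigma_h,(\grad\curl)_h\bs v_h)$ together with \eqref{eq:curdivhskwcurlcurlh} and \eqref{eq:curcurlh}, I compute
\begin{equation*}
\|\curl \bs v_h\|^{2} = ((\curl\curl)_h \bs v_h,\bs v_h) = -((\curl\div)_h \bs \sigma_h,\bs v_h) = (I_h^{\rm tn}(\mskw \bs v_h),(\grad\curl)_h\bs v_h).
\end{equation*}
Cauchy--Schwarz then gives $\|\curl \bs v_h\|^{2}\le \|I_h^{\rm tn}(\mskw \bs v_h)\|\,\|(\grad\curl)_h\bs v_h\|$.

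\smallskip
\step 2 \; The key quantitative step is to establish the $L^{2}$-stability
\begin{equation*}
\|I_h^{\rm tn}(\mskw \bs v_h)\|\lesssim \|\bs v_h\|.
\end{equation*}
Since $\mskw \bs v_h$ is piecewise polynomial (of the same degree as $\bs v_h$) and since every DoF in \eqref{eq:curdivfemdof1}--\eqref{eq:curdivfemdof2} is well defined on such functions, I plan to argue by a standard scaling argument: map $T$ to a reference tetrahedron, use norm equivalence on the finite-dimensional polynomial space there, and transform back, absorbing the face contributions via the polynomial inverse trace inequality $\|\bs p\|_{0,\partial T}\lesssim h_T^{-1/2}\|\bs p\|_{0,T}$. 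Summing over $T$ yields the desired bound with a constant depending only on shape regularity.

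\smallskip
\step 3 \; Combining steps 1 and 2 gives $\|\curl \bs v_h\|^{2}\lesssim \|\bs v_h\|\,\|(\grad\curl)_h\bs v_h\|$. Applying the discrete Poincar\'e inequality \eqref{eq:curlhpoincare} for $\bs v_h\in K_h^c$, namely $\|\bs v_h\|\lesssim \|\curl \bs v_h\|$, I obtain $\|\curl \bs v_h\|\lesssim \|(\grad\curl)_h\bs v_h\|$. Invoking \eqref{eq:curlhpoincare} once more to control $\|\bs v_h\|$ by $\|\curl \bs v_h\|$, I conclude
\begin{equation*}
\|\bs v_h\|_{H(\curl)}=\|\bs v_h\|+\|\curl \bs v_h\|\lesssim \|(\grad\curl)_h\bs v_h\|,
\end{equation*}
which is \eqref{eq:gradcurlhpoincare}.

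\smallskip
The main obstacle is step 2, the $L^{2}$-stability of $I_h^{\rm tn}$ applied to $\mskw \bs v_h$: the face-moment DoFs \eqref{eq:curdivfemdof1} do not extend boundedly to $L^{2}$ on general functions, so the bound is genuinely only a statement about the finite-dimensional space of piecewise polynomials and must be handled via scaling plus the polynomial inverse trace inequality.
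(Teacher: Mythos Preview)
Your proposal is correct and follows essentially the same approach as the paper: both set $\bs\tau_h=I_h^{\rm tn}(\mskw\bs v_h)$, use \eqref{eq:curdivhskwcurlcurlh} and the adjoint relation to write $\|\curl\bs v_h\|^2=(\bs\tau_h,(\grad\curl)_h\bs v_h)$, bound $\|\bs\tau_h\|\lesssim\|\bs v_h\|$ via a scaling/inverse-inequality argument, and finish with Cauchy--Schwarz plus \eqref{eq:curlhpoincare}. The only difference is the order in which the duality computation and the $L^2$-stability bound are presented.
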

\begin{proof}
Set $\bs \tau_h=I_h^{\rm tn}(\mskw\bs v_h)\in \Sigma_{k-1,h}^{\rm tn}$.
Then $(\curl\div)_h\bs \tau_h=-(\curl\curl)_h\bs v_h$ follows from \eqref{eq:curdivhskwcurlcurlh}. By the scaling argument, the inverse inequality and the Poincar\'e inequality
\eqref{eq:curlhpoincare},
\begin{equation}\label{eq:tauhnorm}
\|\bs \tau_h \| \lesssim\|\bs v_h\|\lesssim \|\curl\bs v_h\|.
\end{equation}

It follows that
\begin{equation*}
\|\curl\bs v_h\|^2=((\curl\curl)_h\bs v_h, \bs v_h)=-((\curl\div)_h\bs{\tau}_h, \bs v_h)=(\bs{\tau}_h, (\grad\curl)_h\bs v_h).
\end{equation*}
Applying Cauchy-Schwarz inequality and \eqref{eq:tauhnorm}, we obtain
\begin{equation*}
\|\curl\bs v_h\|\lesssim \|(\grad\curl)_h\bs v_h\|,
\end{equation*}
which implies \eqref{eq:gradcurlhpoincare}.
\end{proof}

\section{Mixed finite element method of the quad-curl problem}\label{sec:mfem}

Let $\Omega\subset\mathbb{R}^{3}$ be a bounded polygonal domain. Consider the fourth order problem
\begin{equation}\label{bigradcurlproblem}
	\begin{aligned}
		&\left\{\begin{aligned}
			-\curl\Delta\curl \boldsymbol{u}&=\boldsymbol{f} & & \text { in } \Omega, \\
			\operatorname{div} \boldsymbol{u} &=0 & & \text { in } \Omega, \\
			\boldsymbol{u} \times \boldsymbol{n}=\operatorname{curl} \boldsymbol{u}\times \boldsymbol{n} &=0 & & \text { on } \partial \Omega,
		\end{aligned}\right.
	\end{aligned}
\end{equation}
where $\boldsymbol{f} \in H^{-1}(\div, \Omega)\cap \ker(\div)$ is known. Such problem arises from multiphysics simulation such as modeling a
magnetized plasma in magnetohydrodynamics~\cite{kingsep1990reviews,Chacon;Simakov;Zocco:2007Steady-state}.

\subsection{Distributional mixed formulation}
Introducing $\boldsymbol{\sigma}:= \grad\curl\boldsymbol{u}$, we have $\tr\boldsymbol{\sigma} = \tr\grad\curl\boldsymbol{u} = \div\curl \bs u = 0$. Then rewrite problem \eqref{bigradcurlproblem} as the second-order system
\begin{equation}\label{quadcurl2ndsystem}
	\begin{aligned}
		\left\{\begin{aligned}
			\boldsymbol{\sigma} - \grad\curl \boldsymbol{u}&=0 \quad\quad \text { in } \Omega ,\\
			\curl{\div}	\boldsymbol{\sigma}&= -\boldsymbol{f}\quad \text { in } \Omega ,\\
			\div\boldsymbol{u} &= 0 \qquad \text { in } \Omega,\\
			\boldsymbol{u} \times \boldsymbol{n}=\operatorname{curl} \boldsymbol{u} &=0 \quad\;\;\;  \text { on } \partial \Omega.
		\end{aligned}\right.
	\end{aligned}
\end{equation}
A mixed formulation of the system \eqref{quadcurl2ndsystem} is to find $\boldsymbol{\sigma}\in H^{-1}(\operatorname{curl} \div, \Omega ; \mathbb{T})$, $\boldsymbol{u}\in H_{0}(\curl, \Omega)$ and $\phi\in H_0^{1}(\Omega)$ such that
\begin{subequations}
\begin{align}
	\label{quadcurlmixed1}	(\boldsymbol{\sigma},\boldsymbol{\tau})+b(\boldsymbol{\tau},\psi;\boldsymbol{u})&=0,  & & \boldsymbol{\tau} \in H^{-1}(\operatorname{curl} \div, \Omega ; \mathbb{T}), \psi\in H_0^{1}(\Omega), \\
	\label{quadcurlmixed2}	b(\boldsymbol{\sigma},\phi;\boldsymbol{v}) &=-\langle \boldsymbol{f}, \boldsymbol{v}\rangle, & &\boldsymbol{v} \in H_{0}(\curl,\Omega),
\end{align}
\end{subequations}
where the bilinear form $b(\cdot,\cdot; \cdot): (H^{-1}(\operatorname{curl} \div, \Omega ; \mathbb{T}) \times H_0^1(\Omega)) \times H_{0}(\curl, \Omega)$
\begin{equation*}
b(\boldsymbol{\tau},\psi;\boldsymbol{v}):=\langle\operatorname{curl} {\div}\boldsymbol{\tau}, \boldsymbol{v}\rangle+(\grad\psi, \boldsymbol{v}).
\end{equation*}
The term $(\grad\psi, \boldsymbol{u})$ is introduced to impose the divergence free condition $\div \bs u = 0$. 


\begin{lemma}
For $\boldsymbol{v} \in H_{0}(\curl,\Omega)$, it holds
\begin{equation}\label{eq:curldivinfsup}	
\|\boldsymbol{v}\|_{H(\curl)}\lesssim\sup_{\boldsymbol{\tau}\in H^{-1}(\operatorname{curl} \div, \Omega ; \mathbb{T}), \psi\in H_0^1(\Omega)}\frac{b(\boldsymbol{\tau},\psi;\boldsymbol{v})}{\|\boldsymbol{\tau}\|_{H^{-1}(\operatorname{curl} \div)}+|\psi|_1}.
\end{equation}
\end{lemma}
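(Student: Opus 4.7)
The plan is to construct explicit test functions $(\boldsymbol\tau,\psi)$ from a Helmholtz decomposition of $\boldsymbol v$ and exploit the skew--mskw identification. First, I would write $\boldsymbol v = \boldsymbol w + \grad q$ with $\boldsymbol w\in K^c := H_0(\curl,\Omega)\cap\ker(\div)$ and $q\in H_0^1(\Omega)$; this decomposition is $L^2$-orthogonal, gives $\curl\boldsymbol v = \curl\boldsymbol w$, and satisfies $\|\boldsymbol w\| + \|\grad q\|\lesssim\|\boldsymbol v\|$. I would then choose $\boldsymbol\tau = -\mskw\boldsymbol w\in L^2(\Omega;\mathbb T)$ (trace-free because $\mskw$ yields skew matrices) and $\psi = q$. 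Using the identity $\div\mskw\boldsymbol w = -\curl\boldsymbol w$ from \eqref{eq:divmskw}, we have $\div\boldsymbol\tau = \curl\boldsymbol w\in L^2$, so $\curl\div\boldsymbol\tau = \curl\curl\boldsymbol w$, which lies in $H^{-1}(\div,\Omega)\cap\ker(\div)$ by the isomorphism $\curl\curl\colon K^c\to H^{-1}(\div,\Omega)\cap\ker(\div)$ recorded in the preceding subsection. Integration by parts (justified since $\boldsymbol v\in H_0(\curl,\Omega)$ and $\curl\boldsymbol w\in L^2$) together with the $L^2$-orthogonality $(\grad q,\boldsymbol w)=0$ then yields
\[
b(\boldsymbol\tau,\psi;\boldsymbol v) = (\curl\boldsymbol w,\curl\boldsymbol v) + \|\grad q\|^2 = \|\curl\boldsymbol v\|^2 + \|\grad q\|^2.
\]

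Next I would estimate the denominator. The Poincar\'e inequality on $K^c$ gives $\|\boldsymbol\tau\| = \|\mskw\boldsymbol w\|\lesssim\|\boldsymbol w\|\lesssim\|\curl\boldsymbol w\| = \|\curl\boldsymbol v\|$. Since $\div(\curl\curl\boldsymbol w)=0$, the $H^{-1}(\div)$-norm collapses to the vector $H^{-1}$-norm, and pairing against $\boldsymbol\phi\in H_0^1(\Omega;\mathbb R^3)$ gives $\langle\curl\curl\boldsymbol w,\boldsymbol\phi\rangle = (\curl\boldsymbol w,\curl\boldsymbol\phi)\lesssim\|\curl\boldsymbol w\|\,\|\boldsymbol\phi\|_1$, hence $\|\curl\div\boldsymbol\tau\|_{H^{-1}(\div)}\lesssim\|\curl\boldsymbol v\|$. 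Combined with $|\psi|_1 = \|\grad q\|$, this yields $\|\boldsymbol\tau\|_{H^{-1}(\curl\div)} + |\psi|_1\lesssim\|\curl\boldsymbol v\| + \|\grad q\|$.

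Finally, putting numerator and denominator together and invoking the elementary inequality $a^2+b^2\geq\tfrac12(a+b)^2$ gives
\[
\frac{b(\boldsymbol\tau,\psi;\boldsymbol v)}{\|\boldsymbol\tau\|_{H^{-1}(\curl\div)} + |\psi|_1}\gtrsim \|\curl\boldsymbol v\| + \|\grad q\|\gtrsim \|\boldsymbol v\|_{H(\curl)},
\]
where the last step uses $\|\boldsymbol v\|^2 = \|\boldsymbol w\|^2 + \|\grad q\|^2$ together with the Poincar\'e bound $\|\boldsymbol w\|\lesssim\|\curl\boldsymbol v\|$ once more. The main technical point I expect is verifying that $-\mskw\boldsymbol w$ really sits in $H^{-1}(\curl\div,\Omega;\mathbb T)$ with norm controlled by $\|\curl\boldsymbol v\|$; this hinges on the divergence-free structure $\div\curl\curl\boldsymbol w = 0$ and the duality $H^{-1}(\div,\Omega) = (H_0(\curl,\Omega))'$ used throughout the section. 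Everything else is bookkeeping around the Helmholtz split.
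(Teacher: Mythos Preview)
Your proof is correct and follows essentially the same route the paper indicates (Helmholtz decomposition of $\boldsymbol v$, then test with $\boldsymbol\tau=-\mskw$ of the divergence-free part and $\psi$ equal to the potential, using $\div\mskw=-\curl$). The paper's discrete template (Lemma~\ref{lm:discreteinfsup1}) additionally throws in the $(\curl\curl)^{-1}$ preimage $\boldsymbol u$ so as to hit $b_h=\|\boldsymbol v_h\|_{H(\curl)}^2$ exactly, but your streamlined choice $\boldsymbol\tau=-\mskw\boldsymbol w$ combined with the Poincar\'e bound $\|\boldsymbol w\|\lesssim\|\curl\boldsymbol v\|$ achieves the same inf-sup estimate and is presumably the ``simpler'' continuous argument the paper alludes to.
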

Proof of this lemma is similar to, indeed  simpler than, that of the discrete inf-sup condition cf. Lemma \ref{lm:discreteinfsup1}, we thus skip the details here.

\begin{lemma}\label{lm:nullspacecoercivity}
For $\boldsymbol{\tau} \in H^{-1}(\operatorname{curl} \div, \Omega ; \mathbb{T})$ and $\psi\in H_0^{1}(\Omega)$ satisfying 
\begin{equation}\label{eq:kernelBeqn}
b(\boldsymbol{\tau},\psi;\boldsymbol{v})=0,\quad \forall \ \boldsymbol{v} \in H_{0}(\curl,\Omega),
\end{equation}
it holds
\begin{equation}\label{eq:curldivcoercivity}
\|\boldsymbol{\tau}\|_{H^{-1}(\operatorname{curl} \div)}^2+|\psi|_1^2 = \|\boldsymbol{\tau}\|^2.
\end{equation}
\end{lemma}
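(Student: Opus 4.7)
The plan is to exploit the kernel equation \eqref{eq:kernelBeqn} by testing against two judiciously chosen vector fields: first $\bs v = \grad\psi$ to eliminate the curl$\div$ contribution and force $\psi = 0$, then arbitrary $\bs v \in H_0(\curl,\Omega)$ to deduce $\curl\div\boldsymbol\tau = 0$ in $H^{-1}(\div,\Omega)$.

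First I would note that $\grad\psi \in H_0(\curl,\Omega)$ for every $\psi \in H_0^1(\Omega)$, since $\curl\grad\psi = 0$ and the tangential trace vanishes. Inserting $\bs v = \grad\psi$ into \eqref{eq:kernelBeqn} and using $(\grad\psi,\grad\psi) = |\psi|_1^2$ yields
\begin{equation*}
\langle \curl\div\boldsymbol\tau, \grad\psi\rangle + |\psi|_1^2 = 0.
\end{equation*}
The duality pairing vanishes: since $\curl\div\boldsymbol\tau \in H^{-1}(\div,\Omega) = (H_0(\curl,\Omega))'$ and $\div\curl = 0$ as a distributional identity, we have $\div(\curl\div\boldsymbol\tau) = 0$ in $H^{-1}(\Omega)$. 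Therefore $\curl\div\boldsymbol\tau$ annihilates $\grad H_0^1(\Omega)$ under the pairing with $H_0(\curl,\Omega)$, giving $\langle\curl\div\boldsymbol\tau,\grad\psi\rangle = 0$. It follows that $|\psi|_1 = 0$ and thus $\psi = 0$.

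With $\psi = 0$ substituted back, equation \eqref{eq:kernelBeqn} becomes
\begin{equation*}
\langle\curl\div\boldsymbol\tau, \bs v\rangle = 0, \quad \forall\, \bs v \in H_0(\curl,\Omega),
\end{equation*}
which means $\curl\div\boldsymbol\tau = 0$ as an element of $H^{-1}(\div,\Omega)$ and in particular $\|\curl\div\boldsymbol\tau\|_{-1} = 0$. Combining this with $\|\boldsymbol\tau\|_{H^{-1}(\curl\div)}^2 = \|\boldsymbol\tau\|^2 + \|\curl\div\boldsymbol\tau\|_{-1}^2$ and $|\psi|_1 = 0$ immediately yields \eqref{eq:curldivcoercivity}.

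The only delicate point is justifying $\langle\curl\div\boldsymbol\tau,\grad\psi\rangle = 0$ rigorously; this uses the identification $(H_0(\curl,\Omega))' = H^{-1}(\div,\Omega)$ from~\cite{ChenHuang2018} together with the distributional identity $\div\curl = 0$. Everything else is bookkeeping on the norm definition.
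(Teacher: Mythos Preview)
Your proof is correct and follows essentially the same approach as the paper: test with $\boldsymbol v=\grad\psi$ to obtain $\psi=0$, then conclude $\curl\div\boldsymbol\tau=0$ in $(H_0(\curl,\Omega))'$ and read off the norm identity. The only difference is that you spell out the justification for $\langle\curl\div\boldsymbol\tau,\grad\psi\rangle=0$, whereas the paper simply asserts ``we get $\psi=0$'' without isolating this step.
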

\begin{proof}
By taking $\boldsymbol{v} = \grad \psi$ in \eqref{eq:kernelBeqn}, we get $\psi=0$. Then \eqref{eq:kernelBeqn} becomes
\begin{equation*}
\langle\operatorname{curl} {\div}\boldsymbol{\tau}, \boldsymbol{v}\rangle=0,\quad \forall \ \boldsymbol{v} \in H_{0}(\curl,\Omega).
\end{equation*}
Hence $\operatorname{curl} {\div}\boldsymbol{\tau}=0$, and \eqref{eq:curldivcoercivity} follows.
\end{proof}

Combining \eqref{eq:curldivinfsup}, \eqref{eq:curldivcoercivity} and the Babu{\v{s}}ka-Brezzi theory~\cite{BoffiBrezziFortin2013} yileds the wellposedness of the mixed formulation \eqref{quadcurlmixed1}-\eqref{quadcurlmixed2}.

%
\begin{theorem}
The mixed formulation \eqref{quadcurlmixed1}-\eqref{quadcurlmixed2}	is well-posed. Namely for any $\boldsymbol{f} \in H^{-1}(\div, \Omega)\cap \ker(\div)$, there exists a unique solution $(\bs \sigma, \bs u, \phi)$ to \eqref{quadcurlmixed1}-\eqref{quadcurlmixed2}. Furthermore we have $\phi=0$, and the stability
\begin{equation}\label{eq:sigmauboundedness}	
\|\boldsymbol{\sigma}\|_{H^{-1}(\operatorname{curl} \div)} + \|\boldsymbol{u}\|_{H(\curl)}\lesssim \|\boldsymbol{f}\|_{H^{-1}(\div)}.
\end{equation}
\end{theorem}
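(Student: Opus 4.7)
The plan is to apply the Babu{\v{s}}ka--Brezzi theory to the saddle-point system \eqref{quadcurlmixed1}--\eqref{quadcurlmixed2}, treating $(\bs{\sigma},\phi)\in X := H^{-1}(\curl\div,\Omega;\mathbb T)\times H_0^1(\Omega)$ as a single trial variable and $\bs u\in M := H_0(\curl,\Omega)$ as the second variable. The upper-left bilinear form $a((\bs{\sigma},\phi),(\bs{\tau},\psi)) := (\bs{\sigma},\bs{\tau})$ is obviously bounded on $X\times X$. Coercivity on $\ker(b)$ is immediate from Lemma \ref{lm:nullspacecoercivity}: on that kernel, $\|(\bs{\tau},\psi)\|_X^2 = \|\bs{\tau}\|^2 = a((\bs{\tau},\psi),(\bs{\tau},\psi))$. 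The required inf-sup condition for $b$ is exactly \eqref{eq:curldivinfsup}. Brezzi's theorem then delivers a unique triple $(\bs{\sigma},\bs u,\phi)$ together with the bound $\|\bs{\sigma}\|_{H^{-1}(\curl\div)}+|\phi|_1+\|\bs u\|_{H(\curl)}\lesssim \|\bs f\|_{H^{-1}(\div)}$, which in particular establishes \eqref{eq:sigmauboundedness}.

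To upgrade the conclusion $|\phi|_1\lesssim\|\bs f\|$ to $\phi = 0$, I will test \eqref{quadcurlmixed2} against the admissible direction $\bs v = \grad\phi\in H_0(\curl,\Omega)$, which yields
\[
\langle\curl\div\bs{\sigma},\grad\phi\rangle + |\phi|_1^2 = -\langle \bs f,\grad\phi\rangle.
\]
Both duality pairings sit in $\langle\cdot,\cdot\rangle_{H^{-1}(\div)\times H_0(\curl)}$, and by the extended integration-by-parts formula implicit in the identification $(H_0(\curl,\Omega))' = H^{-1}(\div,\Omega)$ recalled from \cite{ChenHuang2018}, they rewrite as $-\langle\div(\curl\div\bs{\sigma}),\phi\rangle$ and $-\langle\div\bs f,\phi\rangle$, respectively. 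The second term vanishes by the hypothesis $\bs f\in\ker(\div)$, and the first by the complex property $\div\circ\curl\div = 0$ in the distributional $\curl\div$ complex \eqref{distribcurldivcomplex}. Hence $|\phi|_1 = 0$, forcing $\phi = 0$.

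The principal obstacle I anticipate is a mild one: justifying the integration-by-parts identity $\langle \bs g,\grad\phi\rangle = -\langle \div \bs g,\phi\rangle$ for $\bs g\in H^{-1}(\div,\Omega)$ and $\phi\in H_0^1(\Omega)$. This is essentially the content of the duality $(H_0(\curl,\Omega))' = H^{-1}(\div,\Omega)$, but it deserves to be invoked explicitly so that both vanishing arguments in the proof of $\phi=0$ are transparent. Everything else is a direct invocation of Brezzi's theorem combined with the structural properties of the distributional complex already established in Section~\ref{sec:distribcurldivcomplex}.
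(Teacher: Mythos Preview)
Your proposal is correct and follows essentially the same route as the paper: invoke Babu\v{s}ka--Brezzi theory using the inf-sup condition \eqref{eq:curldivinfsup} and the kernel coercivity of Lemma~\ref{lm:nullspacecoercivity}, then test \eqref{quadcurlmixed2} with $\bs v=\grad\phi$ to conclude $\phi=0$. Your explicit justification of why both duality terms $\langle\curl\div\bs\sigma,\grad\phi\rangle$ and $\langle\bs f,\grad\phi\rangle$ vanish (via $\div\circ\curl\div=0$ and $\div\bs f=0$, respectively) is a welcome elaboration of what the paper leaves implicit.
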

\begin{proof}
Combine the inf-sup condition \eqref{eq:curldivinfsup} and the coercivity \eqref{eq:curldivcoercivity} to get \eqref{eq:sigmauboundedness} and the well-posedness of the mixed formulation \eqref{quadcurlmixed1}-\eqref{quadcurlmixed2}. By choosing $\bs v=\grad\phi$ in \eqref{quadcurlmixed2}, we get $\phi= 0$ from $\div\bs f=0$.
\end{proof}

\subsection{Distributional mixed finite element method}
For $(\boldsymbol{\tau}, \psi)\in\Sigma^{\rm tn}\times H_0^1(\Omega)$ and $\boldsymbol{v}\in V_0^{\curl}$, introduce the bi-linear form
\begin{equation*}
b_h(\boldsymbol{\tau},\psi;\boldsymbol{v}):=\langle(\curl\div)_w\boldsymbol{\tau}, \boldsymbol{v}\rangle +(\grad\psi, \boldsymbol{v}).
\end{equation*}
By \eqref{eq:curldivhcurldivw},
we have for $(\boldsymbol{\tau}, \psi)\in\Sigma_{k-1,h}^{\rm tn}\times \mathring{\mathbb V}_{\ell+1, h}^{\grad}$ and $\boldsymbol{v}\in \mathring{\mathbb V}_{(k,\ell), h}^{\curl}$ that
\begin{equation*}
b_h(\boldsymbol{\tau},\psi;\boldsymbol{v})=((\curl\div)_h\boldsymbol{\tau}, \boldsymbol{v}) +(\grad\psi, \boldsymbol{v}).
\end{equation*}
Then the distributional mixed finite element method is to find $\boldsymbol{\sigma}_h\in {\Sigma}^{\rm tn}_h$, $\phi_h\in\mathring{\mathbb V}_h^{\grad}$ and $\boldsymbol{u}_h\in\mathring{\mathbb V}_h^{\curl}$ such that
\begin{subequations}
\begin{align}
\label{distribumfem1}	(\boldsymbol{\sigma}_h,\boldsymbol{\tau}_h)+b_h(\boldsymbol{\tau}_h,\psi_h;\boldsymbol{u}_h)&=0,  & & \boldsymbol{\tau}_h \in {\Sigma}^{\rm tn}_h, \psi_h\in\mathring{\mathbb V}_h^{\grad}, \\
\label{distribumfem2}	b_h(\boldsymbol{\sigma}_h,\phi_h;\boldsymbol{v}_h) &=-\langle\boldsymbol{f}, \boldsymbol{v}_h\rangle, & & \boldsymbol{v}_h \in \mathring{\mathbb V}_h^{\curl}.
\end{align}
\end{subequations}

We will derive two discrete inf-sup conditions for the linear form $b_h(\cdot,\cdot;\cdot)$.
To this end, introduce some mesh dependent norms.
For $\boldsymbol{\tau}\in\Sigma^{\rm tn}_h$, equip squared norm
\begin{align*}
\|\boldsymbol{\tau}\|_{H^{-1}((\curl\div)_h)}^2&:=\|\boldsymbol{\tau}\|^2+\|(\curl\div)_h\boldsymbol{\tau}\|_{H_h^{-1}(\div)}^2, 
\end{align*}
where 
$
\displaystyle\|\boldsymbol{v}\|_{H_h^{-1}(\div)}:=\sup_{\boldsymbol{w}_h\in\mathring{\mathbb V}_h^{\curl}}\frac{(\boldsymbol{v},\boldsymbol{w}_h)}{\|\boldsymbol{w}_h\|_{H(\curl)}}
$. The continuity of the bilinear form 
\begin{equation*}
b_h(\boldsymbol{\tau},\psi;\boldsymbol{v})\leq (\|\boldsymbol{\tau}\|_{H^{-1}((\curl\div)_h)}+|\psi|_1)\|\boldsymbol{v}\|_{H(\curl)},  \end{equation*}
for all $\boldsymbol{\tau}\in{\Sigma}^{\rm tn}, \psi\in\mathring{\mathbb V}_h^{\grad}, \boldsymbol{v}\in\mathring{\mathbb V}_h^{\curl}$, 
is straight forward by the definition of these norms.


\begin{lemma}\label{lm:discreteinfsup1}
For $\boldsymbol{v}_h \in \mathring{\mathbb V}_h^{\curl}$, it holds
\begin{equation}\label{eq:curldivdiscreteinfsup1}	
\|\boldsymbol{v}_h\|_{H(\curl)}\lesssim\sup_{\boldsymbol{\tau}_h \in {\Sigma}^{\rm tn}_h, \psi_h\in\mathring{\mathbb V}_h^{\grad}}\frac{b_h(\boldsymbol{\tau}_h,\psi_h;\boldsymbol{v}_h)}{\|\boldsymbol{\tau}_h\|_{H^{-1}((\curl\div)_h)}+|\psi_h|_1}.
\end{equation}
\end{lemma}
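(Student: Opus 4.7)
The plan is to prove the inf-sup condition by an explicit construction of the test pair $(\boldsymbol{\tau}_h,\psi_h)$ given $\boldsymbol{v}_h$, exploiting the discrete Helmholtz decomposition \eqref{eq:helmholtz1} and the key identity \eqref{eq:curdivhskwcurlcurlh}. First I would split $\boldsymbol{v}_h \in \mathring{\mathbb V}_{(k,\ell),h}^{\curl}$ as $\boldsymbol{v}_h = \boldsymbol{v}_h^0 + \grad q_h$ with $\boldsymbol{v}_h^0 \in K_h^c$ and $q_h \in \mathring{\mathbb V}_{\ell+1,h}^{\grad}$, where the two summands are $L^2$-orthogonal and $\curl\boldsymbol{v}_h = \curl\boldsymbol{v}_h^0$.

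Next I would construct the test functions separately for each Helmholtz component. For the gradient part, choose $\psi_h = q_h$ and contribute nothing to $\boldsymbol{\tau}_h$; then $(\grad q_h,\boldsymbol{v}_h) = \|\grad q_h\|^2$ by $L^2$-orthogonality. For the divergence-free part, set $\boldsymbol{\tau}_h = -I_h^{\rm tn}(\mskw\boldsymbol{v}_h^0) \in \Sigma_{k-1,h}^{\rm tn}$. By identity \eqref{eq:curdivhskwcurlcurlh}, $(\curl\div)_h\boldsymbol{\tau}_h = (\curl\curl)_h\boldsymbol{v}_h^0$, so
\begin{equation*}
((\curl\div)_h\boldsymbol{\tau}_h,\boldsymbol{v}_h) = ((\curl\curl)_h\boldsymbol{v}_h^0,\boldsymbol{v}_h^0) = \|\curl\boldsymbol{v}_h^0\|^2 = \|\curl\boldsymbol{v}_h\|^2,
\end{equation*}
where I use that $(\curl\curl)_h\boldsymbol{v}_h^0 \in K_h^c$ is $L^2$-orthogonal to $\grad q_h$. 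The same orthogonality also kills the cross term $((\curl\div)_h\boldsymbol{\tau}_h,\grad q_h)$, so with the combined test pair $(\boldsymbol{\tau}_h,\psi_h)$ one gets
\begin{equation*}
b_h(\boldsymbol{\tau}_h,\psi_h;\boldsymbol{v}_h) = \|\curl\boldsymbol{v}_h\|^2 + \|\grad q_h\|^2.
\end{equation*}

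It remains to estimate the norm of the test pair from above by $\|\boldsymbol{v}_h\|_{H(\curl)}$. For $|\psi_h|_1 = \|\grad q_h\|$ this is immediate from $L^2$-orthogonality. For $\boldsymbol{\tau}_h$, the inverse/scaling argument used to derive \eqref{eq:tauhnorm} gives $\|\boldsymbol{\tau}_h\| \lesssim \|\boldsymbol{v}_h^0\| \lesssim \|\curl\boldsymbol{v}_h\|$ via the Poincaré inequality \eqref{eq:curlhpoincare}. For the negative-order piece, write any $\boldsymbol{w}_h \in \mathring{\mathbb V}_h^{\curl}$ as $\boldsymbol{w}_h^0 + \grad r_h$; then $((\curl\curl)_h\boldsymbol{v}_h^0,\boldsymbol{w}_h) = (\curl\boldsymbol{v}_h^0,\curl\boldsymbol{w}_h) \le \|\curl\boldsymbol{v}_h\|\,\|\boldsymbol{w}_h\|_{H(\curl)}$, so $\|(\curl\div)_h\boldsymbol{\tau}_h\|_{H_h^{-1}(\div)} \le \|\curl\boldsymbol{v}_h\|$. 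Finally, applying the Poincaré inequality on $K_h^c$ once more yields $\|\boldsymbol{v}_h\|_{H(\curl)}^2 \lesssim \|\curl\boldsymbol{v}_h\|^2 + \|\grad q_h\|^2$, and combining everything gives the desired lower bound.

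The main obstacle I expect is bookkeeping the orthogonality relations and the fact that $(\curl\curl)_h\boldsymbol{v}_h^0$ lies in $K_h^c$ (so its action on $\grad q_h$ vanishes), which is what makes the two test functions decouple cleanly; once that is set up, every individual estimate reduces either to the Poincaré inequality \eqref{eq:curlhpoincare}, the scaling argument behind \eqref{eq:tauhnorm}, or the $L^2$-orthogonality in \eqref{eq:helmholtz1}.
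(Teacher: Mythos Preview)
Your proof is correct and follows essentially the same route as the paper: Helmholtz decomposition~\eqref{eq:helmholtz1}, the identity~\eqref{eq:curdivhskwcurlcurlh}, and the Poincar\'e inequalities on $K_h^c$. The only difference is the choice of $\boldsymbol{\tau}_h$: the paper takes $\boldsymbol{\tau}_h=-I_h^{\rm tn}\mskw(\boldsymbol{u}_h+\tilde{\boldsymbol{v}}_h)$ with $(\curl\curl)_h\boldsymbol{u}_h=\boldsymbol{v}_h^0$, which yields $b_h(\boldsymbol{\tau}_h,\psi_h;\boldsymbol{v}_h)=\|\boldsymbol{v}_h\|_{H(\curl)}^2$ directly, at the cost of invoking~\eqref{eq:curlcurlhpoincare} to control $\|\boldsymbol{u}_h\|$; your simpler choice $\boldsymbol{\tau}_h=-I_h^{\rm tn}(\mskw\boldsymbol{v}_h^0)$ gives $b_h=\|\curl\boldsymbol{v}_h\|^2+\|\grad q_h\|^2$ and then recovers the full $H(\curl)$ norm via one more application of~\eqref{eq:curlhpoincare}.
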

\begin{proof}
By Helmholtz decomposition \eqref{eq:helmholtz1}, given a $\bs v_h \in \mathring{\mathbb V}_h^{\curl}$, there exists $\bs u_h \in K_h^c$, $\tilde{\bs v}_h = (\curl\curl)_h\bs u_h$ and $\psi_h \in \mathring{\mathbb V}_{\ell+1, h}^{\grad}$ s.t.
\begin{equation}\label{eq:discretehemlholtz}
\bs v_h = (\curl\curl)_h\bs u_h \oplus^{L_2} \grad \psi_h = \tilde{\bs v}_h \oplus^{L_2} \grad \psi_h.
\end{equation} 
Then \begin{equation*}\curl \bs v_h = \curl \tilde{\bs v}_h, \quad \| \bs v_h \|^2= \| (\curl\curl)_h\bs u_h \|^2 + | \psi_h |_1^2. \end{equation*}

Set $\bs \tau_h=-I_h^{\rm tn}\mskw(\bs u_h + \tilde{\bs v}_h)\in \Sigma_{k-1,h}^{\rm tn}$.
By \eqref{eq:curdivhskwcurlcurlh}, 
\begin{equation*}
((\curl\div)_h\bs \tau_h, \bs v_h)=((\curl\curl)_h(\bs u_h + \tilde{\bs v}_h), \bs v_h) = \| (\curl\curl)_h\bs u_h\|^2 + \|\curl \bs v_h\|^2.
\end{equation*}
Consequently $b_h(\boldsymbol{\tau}_h,\psi_h;\boldsymbol{v}_h) = \|\bs v_h\|^2 + \|\curl \bs v_h\|^2$. 

It remains to control the norms. As the decomposition \eqref{eq:discretehemlholtz} is $L^2$-orthogonal, $|\psi_h|_1\leq \|\bs v_h\|$ and $\| (\curl\curl)_h\boldsymbol{u}_h\|\leq \|\bs v_h\|$. 
We control the negative norm by 
\begin{align*}
\| (\curl\div)_h \bs \tau_h\|_{H_h^{-1}(\div)}& = \sup_{\boldsymbol{w}_h\in\mathring{\mathbb V}_h^{\curl}}\frac{(\curl\boldsymbol{u}_h + \curl\tilde{\bs v}_h, \curl\boldsymbol{w}_h)}{\|\boldsymbol{w}_h\|_{H(\curl)}}\\
&\leq \|\curl\boldsymbol{u}_h\| + \|\curl \bs v_h\| \\
& \lesssim \|(\curl\curl)_h\bs u_h\| + \|\curl \bs v_h\| \leq \|\bs v_h\|_{H(\curl)},
\end{align*}
where we have used the discrete Poincar\'e	inequality
\eqref{eq:curlcurlhpoincare}.


By the scaling argument, the inverse inequality and the Poincar\'e	inequality
\eqref{eq:curlcurlhpoincare},
\begin{equation*}
\|\bs \tau_h \| \lesssim\|\bs u_h\| + \|\tilde{\bs v}_h\| \lesssim \|(\curl\curl)_h\bs u_h\| + \|\curl \bs v_h\| \leq 2\|\bs v_h\|_{H(\curl)},
\end{equation*}
as required.
%
\end{proof}


Introduce
\begin{equation*}
\|\boldsymbol{v}_h\|_{H((\grad\curl)_h)}^2 := \|\bs v_h\|^2 + \| (\grad\curl)_h \bs v_h\|^2. 
\end{equation*}
Again the continuity of the bilinear form in these norms
\begin{equation*}
b_h(\boldsymbol{\tau},\psi;\boldsymbol{v})\lesssim (\|\boldsymbol{\tau}\|+|\psi|_1)\|\boldsymbol{v}\|_{H((\grad\curl)_h)},  \boldsymbol{\tau}\in{\Sigma}^{\rm tn}_h, \psi\in\mathring{\mathbb V}_h^{\grad}, \boldsymbol{v}\in\mathring{\mathbb V}_h^{\curl}
\end{equation*}
is straight forward by the definition of these norms.

\begin{lemma}
For $\boldsymbol{v}_h \in \mathring{\mathbb V}_h^{\curl}$, it holds
\begin{equation}\label{eq:curldivdiscreteinfsup2}	
\|\boldsymbol{v}_h\|_{H((\grad\curl)_h)}\lesssim\sup_{\boldsymbol{\tau}_h \in {\Sigma}^{\rm tn}_h, \psi_h\in\mathring{\mathbb V}_h^{\grad}}\frac{b_h(\boldsymbol{\tau}_h,\psi_h;\boldsymbol{v}_h)}{\|\boldsymbol{\tau}_h\|+|\psi_h|_1}.
\end{equation}
\end{lemma}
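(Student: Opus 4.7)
The plan is to mirror the structure of Lemma \ref{lm:discreteinfsup1} but exploit the adjoint relation
$((\curl\div)_h \bs \tau_h, \bs v_h) = -(\bs \tau_h, (\grad\curl)_h \bs v_h)$
from \eqref{eq:curldivhcurldivw}, which makes the canonical choice of test function explicit. Given $\bs v_h \in \mathring{\mathbb V}_{(k,\ell),h}^{\curl}$, I would first invoke the $L^2$-orthogonal Helmholtz decomposition \eqref{eq:helmholtz1}
$\bs v_h = \tilde{\bs v}_h + \grad \phi_h$ with $\tilde{\bs v}_h \in K_h^c$ and $\phi_h \in \mathring{\mathbb V}_{\ell+1,h}^{\grad}$, and observe that since $\curl\grad = 0$ the weak operator satisfies $(\grad\curl)_h \bs v_h = (\grad\curl)_h \tilde{\bs v}_h$.

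Next I would choose $\bs \tau_h := -(\grad\curl)_h \bs v_h \in \Sigma_{k-1,h}^{\rm tn}$ and $\psi_h := \phi_h \in \mathring{\mathbb V}_{\ell+1,h}^{\grad}$. Using the duality recalled above together with the $L^2$-orthogonality of the Helmholtz decomposition, a direct computation yields
\begin{equation*}
b_h(\bs \tau_h,\psi_h;\bs v_h) = -(\bs \tau_h,(\grad\curl)_h\bs v_h) + (\grad\phi_h,\bs v_h) = \|(\grad\curl)_h\bs v_h\|^2 + |\phi_h|_1^2.
\end{equation*}
Moreover the obvious bound $\|\bs \tau_h\| + |\psi_h|_1 = \|(\grad\curl)_h \bs v_h\| + |\phi_h|_1 \lesssim \bigl(\|(\grad\curl)_h\bs v_h\|^2 + |\phi_h|_1^2\bigr)^{1/2}$ shows that $b_h(\bs \tau_h,\psi_h;\bs v_h)^{1/2} \lesssim b_h(\bs \tau_h,\psi_h;\bs v_h)/(\|\bs \tau_h\|+|\psi_h|_1)$.

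The last step is to bound the norm on the left of \eqref{eq:curldivdiscreteinfsup2} by the same quantity. Applying the discrete Poincaré inequality \eqref{eq:gradcurlhpoincare} to $\tilde{\bs v}_h \in K_h^c$ gives $\|\tilde{\bs v}_h\| \le \|\tilde{\bs v}_h\|_{H(\curl)} \lesssim \|(\grad\curl)_h \tilde{\bs v}_h\| = \|(\grad\curl)_h \bs v_h\|$; combined with $\|\grad \phi_h\| = |\phi_h|_1$ and the $L^2$-orthogonality $\|\bs v_h\|^2 = \|\tilde{\bs v}_h\|^2 + |\phi_h|_1^2$ one obtains
\begin{equation*}
\|\bs v_h\|_{H((\grad\curl)_h)}^2 = \|\bs v_h\|^2 + \|(\grad\curl)_h\bs v_h\|^2 \lesssim \|(\grad\curl)_h\bs v_h\|^2 + |\phi_h|_1^2 = b_h(\bs \tau_h,\psi_h;\bs v_h).
\end{equation*}
Chaining these estimates yields \eqref{eq:curldivdiscreteinfsup2}.

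I do not expect a serious obstacle: the adjoint relation makes the test pair essentially forced, and the $L^2$-orthogonal Helmholtz decomposition \eqref{eq:helmholtz1} together with the discrete Poincaré inequality \eqref{eq:gradcurlhpoincare} (already proved earlier) handles the control of $\|\bs v_h\|$ by the weak gradient-curl. The only subtlety is keeping track that $(\grad\curl)_h$ annihilates the gradient part so that one can work with $\tilde{\bs v}_h$ in the Poincaré step, and that the $\grad \phi_h$ contribution to $\bs v_h$ is captured entirely by the $(\grad \psi_h, \bs v_h)$ term of $b_h$.
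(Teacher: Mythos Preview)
Your proposal is correct and follows essentially the same approach as the paper: the Helmholtz decomposition \eqref{eq:helmholtz1}, the test pair $\bs\tau_h=-(\grad\curl)_h\bs v_h$, $\psi_h=\phi_h$, and the discrete Poincar\'e inequality \eqref{eq:gradcurlhpoincare} applied to the $K_h^c$-component are exactly what the paper does. The only difference is cosmetic---you spell out the arithmetic more explicitly.
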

\begin{proof}
We still use the Helmholtz decomposition \eqref{eq:discretehemlholtz} but choose \begin{equation*}\bs \tau_h = -(\grad \curl)_h \bs v_h =  -(\grad \curl)_h \tilde{\bs v}_h.\end{equation*}
Then 
\begin{equation*}
b_h(\boldsymbol{\tau}_h,\psi_h;\boldsymbol{v}_h) = \|  (\grad \curl)_h \bs v_h \|^2 + |\psi_h|_1^2. 
\end{equation*}
We end the proof by the estimates $\|\bs{\tau}_h\|+|\psi_h|_1\lesssim \|\boldsymbol{v}_h\|_{H((\grad\curl)_h)} $ and $ \|\boldsymbol{v}_h\| \lesssim \|(\grad \curl)_h \bs v_h \| + |\psi_h|_1$, in which we use the Poincar\'e inequality \eqref{eq:gradcurlhpoincare} for $\tilde{\bs v}_h$. 
\end{proof}

There are other variants of mesh-dependent norms. 
For $\bs\tau\in\Sigma^{\rm tn}$, equip a mesh-dependent squared norm
\begin{equation*}
\|\bs\tau\|_{0,h}^2:=\|\bs\tau\|^2 + \sum_{F\in\mathcal F_h}h_F\|\bs n\times\bs\tau\bs n\|_{F}^2.
\end{equation*}
By the inverse trace inequality, clearly we have $\|\bs\tau_h\|_{0,h}\eqsim \|\bs\tau_h\|$ for $\bs \tau_h\in \Sigma^{\rm tn}_h$.
For piecewise smooth vector-valued function $\bs v$, equip a mesh-dependent squared norm
\begin{equation*}
|\bs v|_{1,h}^2:= \sum_{T\in \mathcal T_h}\|\grad\bs v\|_{T}^2 + \sum_{F\in\mathcal F_h}h_F^{-1}\|\llbracket \bs v\rrbracket\|_{F}^2.
\end{equation*}
Then
\begin{equation*}
b_h(\boldsymbol{\tau},\psi;\boldsymbol{v})\lesssim \|\boldsymbol{\tau}\|_{0,h}|\curl\boldsymbol{v}|_{1,h}+|\psi|_1\|\bs v\|, \quad \boldsymbol{\tau}\in{\Sigma}^{\rm tn}, \psi\in H_0^1(\Omega), \boldsymbol{v}\in V_0^{\curl}.
\end{equation*}
One can prove the norm equivalence
\begin{equation}\label{eq:gradcurlhnormequiv}
\| (\grad\curl)_h \bs v_h\|\eqsim |\curl\bs v_h|_{1,h}, \quad \boldsymbol{v}_h \in \mathring{\mathbb V}_h^{\curl}
\end{equation}
and thus obtain the discrete inf-sup condition from \eqref{eq:curldivdiscreteinfsup2}
\begin{equation*}
\|\curl \boldsymbol{v}_h\|_{1,h}\lesssim\sup_{\boldsymbol{\tau}_h \in {\Sigma}^{\rm tn}_h, \psi\in\mathring{\mathbb V}_h^{\grad}}\frac{b_h(\boldsymbol{\tau}_h,\psi_h;\boldsymbol{v}_h)}{\|\boldsymbol{\tau}_h\|_{0,h}+|\psi_h|_1},
\end{equation*}
where $\|\curl \boldsymbol{v}_h\|_{1,h}^2:=\|\curl \boldsymbol{v}_h\|^2+|\curl \boldsymbol{v}_h|_{1,h}^2$.

The discrete coercivity on the null space is similar to Lemma \ref{lm:nullspacecoercivity}.
\begin{lemma}
For $\boldsymbol{\tau}_h\in{\Sigma}^{\rm tn}_h$ and $\psi_h\in\mathring{\mathbb V}_h^{\grad}$ satisfying 
\begin{equation*}
b_h(\boldsymbol{\tau}_h,\psi_h;\boldsymbol{v}_h)=0,\quad\boldsymbol{v}_h \in \mathring{\mathbb V}_h^{\curl},
\end{equation*}
it holds
\begin{equation}\label{eq:curldivcoercivitydiscrete}
\|\boldsymbol{\tau}_h\|_{H^{-1}((\curl\div)_h)}^2+|\psi_h|_1^2 = \|\boldsymbol{\tau}_h\|^2.
\end{equation}
\end{lemma}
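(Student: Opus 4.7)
The plan is to mirror the continuous argument used for Lemma \ref{lm:nullspacecoercivity}, but now with the discrete Helmholtz decomposition \eqref{eq:helmholtz1} and the complex property $\div_h\circ(\curl\div)_h = 0$ playing the role of the corresponding continuous facts. The goal is to show $\psi_h = 0$ and $(\curl\div)_h\boldsymbol{\tau}_h = 0$, from which the identity falls out immediately from the definition of the norm $\|\cdot\|_{H^{-1}((\curl\div)_h)}$.

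First I would test the hypothesis with the admissible choice $\boldsymbol{v}_h = \grad\psi_h$, which lies in $\mathring{\mathbb V}_{(k,\ell),h}^{\curl}$ by the discrete de Rham complex $\grad\mathring{\mathbb V}_{\ell+1,h}^{\grad}\subset\mathring{\mathbb V}_{(k,\ell),h}^{\curl}$. Using the form $b_h(\boldsymbol{\tau}_h,\psi_h;\boldsymbol{v}_h) = ((\curl\div)_h\boldsymbol{\tau}_h,\boldsymbol{v}_h) + (\grad\psi_h,\boldsymbol{v}_h)$ produces the identity
\begin{equation*}
((\curl\div)_h\boldsymbol{\tau}_h,\grad\psi_h) + |\psi_h|_1^2 = 0.
\end{equation*}
The cross term vanishes: by \eqref{eq:curldivhonto} we have $(\curl\div)_h\boldsymbol{\tau}_h \in K_h^c = \mathring{\mathbb V}_{(k,\ell),h}^{\curl}\cap\ker(\div_h)$, and since $\div_h$ is the $L^2$-adjoint of $-\grad$ restricted to $\mathring{\mathbb V}_{\ell+1,h}^{\grad}$, this is exactly the $L^2$-orthogonality to $\grad\mathring{\mathbb V}_{\ell+1,h}^{\grad}$ expressed in the Helmholtz decomposition \eqref{eq:helmholtz1}. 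Consequently $|\psi_h|_1^2 = 0$, and since $\psi_h\in\mathring{\mathbb V}_{\ell+1,h}^{\grad}$ (with the boundary condition built in), this forces $\psi_h = 0$.

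With $\psi_h=0$, the hypothesis collapses to $((\curl\div)_h\boldsymbol{\tau}_h,\boldsymbol{v}_h)=0$ for every $\boldsymbol{v}_h\in\mathring{\mathbb V}_{(k,\ell),h}^{\curl}$. Since $(\curl\div)_h\boldsymbol{\tau}_h$ itself lies in $\mathring{\mathbb V}_{(k,\ell),h}^{\curl}$, testing against it yields $(\curl\div)_h\boldsymbol{\tau}_h = 0$ in $L^2(\Omega;\mathbb R^3)$. Then directly from the definition
\begin{equation*}
\|(\curl\div)_h\boldsymbol{\tau}_h\|_{H_h^{-1}(\div)} = \sup_{\boldsymbol{w}_h\in\mathring{\mathbb V}_h^{\curl}}\frac{((\curl\div)_h\boldsymbol{\tau}_h,\boldsymbol{w}_h)}{\|\boldsymbol{w}_h\|_{H(\curl)}} = 0,
\end{equation*}
and combining with $|\psi_h|_1 = 0$ gives $\|\boldsymbol{\tau}_h\|_{H^{-1}((\curl\div)_h)}^2 + |\psi_h|_1^2 = \|\boldsymbol{\tau}_h\|^2$, as claimed.

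There is no real obstacle: every ingredient has already been established. The only delicate point is recognizing that the cross-term orthogonality $((\curl\div)_h\boldsymbol{\tau}_h,\grad\psi_h)=0$ is precisely the discrete complex property $\div_h(\curl\div)_h=0$ recast via $L^2$-duality, which avoids having to argue analytically with $\curl\grad\psi_h = 0$ inside the formula \eqref{eq:curldivhcurldivw} (though that also works, face terms included, since $\curl\grad\psi_h\equiv 0$ kills both volume and face contributions).
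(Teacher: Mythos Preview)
Your proof is correct and follows precisely the approach the paper indicates (the paper does not spell out a proof, merely noting that the argument is ``similar to Lemma \ref{lm:nullspacecoercivity}''). Your handling of the cross term via the complex property $\div_h(\curl\div)_h=0$ is exactly right, and as you observe, one could equally well see it directly from \eqref{eq:curldivhcurldivw} since $\curl\grad\psi_h=0$.
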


Applying the Babu{\v{s}}ka-Brezzi theory~\cite{BoffiBrezziFortin2013}, from the discrete inf-sup conditions \eqref{eq:curldivdiscreteinfsup1} and \eqref{eq:curldivdiscreteinfsup2}, and the discrete coercivity \eqref{eq:curldivcoercivitydiscrete}, we achieve the well-posedness of the mixed finite element method \eqref{distribumfem1}-\eqref{distribumfem2}.
\begin{theorem}
\! The distributional mixed finite element method \eqref{distribumfem1}-\eqref{distribumfem2} for the quad-curl problem is well-posed.
We have the discrete stability results
\begin{align}
\notag
&\|{\boldsymbol{\sigma}}_h\|_{H^{-1}((\curl\div)_h)}+|{\phi}_h|_1+\|{\boldsymbol{u}}_h\|_{H(\curl)}\\
\label{eq:discretestability1}
&\quad \lesssim \sup_{\boldsymbol{\tau}_h\in{\Sigma}^{\rm tn}_h,\psi_h\in\mathring{\mathbb V}_h^{\grad},\boldsymbol{v}_h\in\mathring{\mathbb V}_h^{\curl}}\frac{A_h({\boldsymbol{\sigma}}_h,{\phi}_h,{\boldsymbol{u}}_h;\boldsymbol{\tau}_h,\psi_h,\boldsymbol{v}_h)}{\|\boldsymbol{\tau}_h\|_{H^{-1}((\curl\div)_h)}+|\psi_h|_1+\|\boldsymbol{v}_h\|_{H(\curl)}},
\end{align}
\begin{align}
\notag
&\|{\boldsymbol{\sigma}}_h\|+|{\phi}_h|_1+\|{\boldsymbol{u}}_h\|_{H((\grad\curl)_h)}\\
\label{eq:discretestability2}
&\quad \lesssim \sup_{\boldsymbol{\tau}_h\in{\Sigma}^{\rm tn}_h,\psi_h\in\mathring{\mathbb V}_h^{\grad},\boldsymbol{v}_h\in\mathring{\mathbb V}_h^{\curl}}\frac{A_h({\boldsymbol{\sigma}}_h,{\phi}_h,{\boldsymbol{u}}_h;\boldsymbol{\tau}_h,\psi_h,\boldsymbol{v}_h)}{\|\boldsymbol{\tau}_h\|+|\psi_h|_1+\|\boldsymbol{v}_h\|_{H((\grad\curl)_h)}},
\end{align}
for any ${\boldsymbol{\sigma}}_h\in{\Sigma}^{\rm tn}_h$, ${\phi}_h\in\mathring{\mathbb V}_h^{\grad}$ and ${\boldsymbol{u}}_h\in\mathring{\mathbb V}_h^{\curl}$, where
\begin{equation*}
A_h({\boldsymbol{\sigma}}_h,{\phi}_h,{\boldsymbol{u}}_h;\boldsymbol{\tau}_h,\psi_h,\boldsymbol{v}_h):=({\boldsymbol{\sigma}}_h,\boldsymbol{\tau}_h)+b_h(\boldsymbol{\tau}_h,\psi_h;{\boldsymbol{u}}_h)+b_h({\boldsymbol{\sigma}}_h,{\phi}_h;\boldsymbol{v}_h).
\end{equation*}
\end{theorem}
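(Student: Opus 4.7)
The plan is to read \eqref{distribumfem1}-\eqref{distribumfem2} as a saddle point system with primal pair $(\boldsymbol{\sigma}_h, \phi_h) \in \Sigma^{\rm tn}_{k-1,h} \times \mathring{\mathbb V}_{\ell+1,h}^{\grad}$, Lagrange multiplier $\boldsymbol{u}_h \in \mathring{\mathbb V}_{(k,\ell),h}^{\curl}$, primal form $a((\boldsymbol{\sigma}_h, \phi_h), (\boldsymbol{\tau}_h, \psi_h)) := (\boldsymbol{\sigma}_h, \boldsymbol{\tau}_h)$, and coupling $b_h$, and then invoke the Babu\v{s}ka-Brezzi theorem twice, once for each of the two norm configurations appearing in \eqref{eq:discretestability1} and \eqref{eq:discretestability2}. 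All the non-trivial ingredients have been prepared earlier in this section.

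I would first record, in the norm configuration with $\|\boldsymbol{\tau}_h\|_{H^{-1}((\curl\div)_h)} + |\psi_h|_1$ on the primal side and $\|\boldsymbol{v}_h\|_{H(\curl)}$ on the multiplier side, the three hypotheses of Brezzi's theorem: (i) continuity of $a$ and $b_h$, already noted in the excerpt; (ii) the discrete inf-sup condition \eqref{eq:curldivdiscreteinfsup1}; and (iii) coercivity of $a$ on $\ker b_h$, which follows from \eqref{eq:curldivcoercivitydiscrete} since on this kernel the identity $\|\boldsymbol{\tau}_h\|_{H^{-1}((\curl\div)_h)}^2 + |\psi_h|_1^2 = \|\boldsymbol{\tau}_h\|^2 = a((\boldsymbol{\tau}_h,\psi_h),(\boldsymbol{\tau}_h,\psi_h))$ controls the full primal norm. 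The global inf-sup \eqref{eq:discretestability1} is a standard consequence of these ingredients: given $(\boldsymbol{\sigma}_h, \phi_h, \boldsymbol{u}_h)$, I would split $(\boldsymbol{\sigma}_h, \phi_h)$ into a piece in $\ker b_h$ and a complementary piece, test the first piece against itself via $a$, test the complementary piece against a multiplier produced by \eqref{eq:curldivdiscreteinfsup1}, and test $\boldsymbol{u}_h$ via a primal pair produced again by \eqref{eq:curldivdiscreteinfsup1}; the three pieces are then assembled into a single test triple with small positive weights.

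The second stability \eqref{eq:discretestability2} is obtained by repeating the argument in the alternative configuration with $\|\boldsymbol{\tau}_h\| + |\psi_h|_1$ on the primal side and $\|\boldsymbol{v}_h\|_{H((\grad\curl)_h)}$ on the multiplier side, using \eqref{eq:curldivdiscreteinfsup2} in place of \eqref{eq:curldivdiscreteinfsup1}; the kernel coercivity is unchanged because \eqref{eq:curldivcoercivitydiscrete} already controls the stronger expression $\|\boldsymbol{\tau}_h\|_{H^{-1}((\curl\div)_h)}^2 + |\psi_h|_1^2$ and thus a fortiori $\|\boldsymbol{\tau}_h\|^2 + |\psi_h|_1^2$. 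Well-posedness of \eqref{distribumfem1}-\eqref{distribumfem2} is then immediate from either global inf-sup, because in finite dimensions a strict lower bound on a square bilinear form gives both existence and uniqueness. I do not anticipate any genuine obstacle: the only care needed is in choosing the three scaling parameters when assembling the test triple so that the cross terms introduced by the complementary-kernel split do not spoil the lower bound, which is a routine exercise in the Babu\v{s}ka-Brezzi machinery once the inf-sup and kernel coercivity are in hand.
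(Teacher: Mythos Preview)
Your proposal is correct and follows exactly the paper's approach: the paper simply states that the well-posedness and the two stability estimates follow by applying the Babu\v{s}ka--Brezzi theory with the discrete inf-sup conditions \eqref{eq:curldivdiscreteinfsup1}, \eqref{eq:curldivdiscreteinfsup2} and the kernel coercivity \eqref{eq:curldivcoercivitydiscrete}. You have spelled out in more detail how the standard machinery assembles these ingredients, but the underlying argument is identical.
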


By choosing $\bs v_h=\grad\phi_h$ in \eqref{distribumfem2}, we get $\phi_h = 0$ from $\div\bs f=0$.

\subsection{Error analysis}

\begin{lemma}
Let $(\boldsymbol{\sigma},0,\boldsymbol{u})$ and $(\boldsymbol{\sigma}_h,0,\boldsymbol{u}_h)$ be the solution of the mixed formulation~\eqref{quadcurlmixed1}-\eqref{quadcurlmixed2} and the mixed finite element method \eqref{distribumfem1}-\eqref{distribumfem2} respectively. Assume $\boldsymbol{\sigma}\in\Sigma^{\rm tn}$, and $\boldsymbol{u},\curl\boldsymbol{u}\in H^1(\Omega;\mathbb R^3)$. Then
\begin{align}	
\notag
&\quad\; A_h(I_h^{\rm tn}\boldsymbol{\sigma}-\boldsymbol{\sigma}_h,0, I_h^{\curl}\boldsymbol{u}-\boldsymbol{u}_h;\boldsymbol{\tau}_h,\psi_h,\boldsymbol{v}_h) \\
\label{eq:error0}
&=(I_h^{\rm tn}\boldsymbol{\sigma}-\boldsymbol{\sigma},\boldsymbol{\tau}_h)+(I_h^{\curl}\boldsymbol{u}-\boldsymbol{u},\grad\psi_h)
\end{align}
holds for any $\boldsymbol{\tau}_h\in{\Sigma}^{\rm tn}_h$, $\psi_h\in\mathring{\mathbb V}_h^{\grad}$ and $\boldsymbol{v}_h\in\mathring{\mathbb V}_h^{\curl}$.
\end{lemma}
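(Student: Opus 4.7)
The identity is a Galerkin-orthogonality type relation, and my approach is to split
\begin{equation*}
A_h(I_h^{\rm tn}\boldsymbol{\sigma}-\boldsymbol{\sigma}_h, 0, I_h^{\curl}\boldsymbol{u}-\boldsymbol{u}_h; \boldsymbol{\tau}_h, \psi_h, \boldsymbol{v}_h) = A_h(I_h^{\rm tn}\boldsymbol{\sigma}, 0, I_h^{\curl}\boldsymbol{u}; \boldsymbol{\tau}_h, \psi_h, \boldsymbol{v}_h) - A_h(\boldsymbol{\sigma}_h, 0, \boldsymbol{u}_h; \boldsymbol{\tau}_h, \psi_h, \boldsymbol{v}_h)
\end{equation*}
and dispose of the second piece via \eqref{distribumfem1}--\eqref{distribumfem2} (with $\phi_h=0$), which yield $A_h(\boldsymbol{\sigma}_h, 0, \boldsymbol{u}_h; \boldsymbol{\tau}_h, \psi_h, \boldsymbol{v}_h) = -\langle \boldsymbol{f}, \boldsymbol{v}_h\rangle$. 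Thus it remains to evaluate $A_h(I_h^{\rm tn}\boldsymbol{\sigma}, 0, I_h^{\curl}\boldsymbol{u}; \boldsymbol{\tau}_h, \psi_h, \boldsymbol{v}_h) + \langle \boldsymbol{f}, \boldsymbol{v}_h\rangle$ and match it to the right-hand side of \eqref{eq:error0}.

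For the middle summand $b_h(\boldsymbol{\tau}_h, \psi_h; I_h^{\curl}\boldsymbol{u}) = ((\curl\div)_h\boldsymbol{\tau}_h, I_h^{\curl}\boldsymbol{u}) + (\grad\psi_h, I_h^{\curl}\boldsymbol{u})$, I would push $I_h^{\curl}$ off $\boldsymbol{u}$ by Lemma \ref{lem:interpolation} (identity \eqref{eq:curdivhIhcurl}), obtaining $\langle (\curl\div)_w \boldsymbol{\tau}_h, \boldsymbol{u}\rangle$; then apply the duality \eqref{eq:duality} together with the assumption $\boldsymbol{u}\in H(\grad\curl,\Omega)$ (so $(\grad\curl)_w\boldsymbol{u}=\grad\curl\boldsymbol{u}=\boldsymbol{\sigma}$) to collapse this to $-(\boldsymbol{\tau}_h,\boldsymbol{\sigma})$. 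For the scalar part I add and subtract $(\grad\psi_h,\boldsymbol{u})$; the subtracted term vanishes because $\psi_h\in H_0^1(\Omega)$ and $\div\boldsymbol{u}=0$ (the latter encoded in \eqref{quadcurlmixed1} by taking $\boldsymbol{\tau}=0$ and varying $\psi$). Combining with $(I_h^{\rm tn}\boldsymbol{\sigma}, \boldsymbol{\tau}_h) - (\boldsymbol{\tau}_h, \boldsymbol{\sigma}) = (I_h^{\rm tn}\boldsymbol{\sigma} - \boldsymbol{\sigma}, \boldsymbol{\tau}_h)$ produces exactly the two terms on the right-hand side of \eqref{eq:error0}.

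The last summand $b_h(I_h^{\rm tn}\boldsymbol{\sigma}, 0; \boldsymbol{v}_h) = ((\curl\div)_h I_h^{\rm tn}\boldsymbol{\sigma}, \boldsymbol{v}_h)$ is handled by the companion identity \eqref{eq:curdivhcdprop}, which rewrites it as $\langle(\curl\div)_w \boldsymbol{\sigma}, \boldsymbol{v}_h\rangle$. Testing the continuous equation \eqref{quadcurlmixed2} with $\phi=0$ and $\boldsymbol{v}_h\in \mathring{\mathbb V}^{\curl}_h \subset H_0(\curl,\Omega)$ gives $\langle\curl\div\boldsymbol{\sigma}, \boldsymbol{v}_h\rangle = -\langle \boldsymbol{f}, \boldsymbol{v}_h\rangle$, so this summand contributes $-\langle\boldsymbol{f},\boldsymbol{v}_h\rangle$, which exactly cancels the $+\langle\boldsymbol{f},\boldsymbol{v}_h\rangle$ produced in the first step.

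The main obstacle is the identification $\langle(\curl\div)_w\boldsymbol{\sigma},\boldsymbol{v}_h\rangle = \langle\curl\div\boldsymbol{\sigma},\boldsymbol{v}_h\rangle$ for $\boldsymbol{v}_h\in\mathring{\mathbb V}^{\curl}_h$: Lemma \ref{lem:curldivw} only asserts equality in the distribution sense, i.e.\ against $C_0^\infty$ test functions, whereas $\boldsymbol{v}_h$ is merely $H_0(\curl,\Omega)$-conforming and piecewise polynomial. Since $\boldsymbol{\sigma}\in\Sigma^{\rm tn}$ is piecewise $H^1$ with continuous $\boldsymbol{n}\times\boldsymbol{\sigma}\boldsymbol{n}$, and $\boldsymbol{v}_h$ is tangentially continuous with single-valued $\boldsymbol{n}\cdot\curl\boldsymbol{v}_h=\rot_F\boldsymbol{v}_h$ and vanishing tangential trace on $\partial\Omega$, the element-wise integration by parts defining \eqref{eq:weakcurldiv} matches exactly the $H^{-1}(\div)$--$H_0(\curl)$ pairing induced by the strong form $\curl\Delta\curl\boldsymbol{u}=-\boldsymbol{f}$, so the two evaluations agree and the cancellation in the previous step is legitimate.
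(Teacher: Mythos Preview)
Your proof is correct and follows essentially the same route as the paper: both arguments hinge on the two interpolation identities \eqref{eq:curdivhcdprop} and \eqref{eq:curdivhIhcurl} together with the continuous and discrete equations, the only difference being that the paper first subtracts \eqref{distribumfem1}--\eqref{distribumfem2} from \eqref{quadcurlmixed1}--\eqref{quadcurlmixed2} to obtain two separate error equations and then adjusts by $(\boldsymbol{\sigma}-I_h^{\rm tn}\boldsymbol{\sigma},\boldsymbol{\tau}_h)$, whereas you work directly with the combined form $A_h$. Your closing discussion of the identification $\langle(\curl\div)_w\boldsymbol{\sigma},\boldsymbol{v}_h\rangle = \langle\curl\div\boldsymbol{\sigma},\boldsymbol{v}_h\rangle$ makes explicit a point that the paper leaves tacit.
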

\begin{proof}
Subtract \eqref{distribumfem1}-\eqref{distribumfem2} from \eqref{quadcurlmixed1}-\eqref{quadcurlmixed2} and use \eqref{eq:curdivhcdprop} and \eqref{eq:curdivhIhcurl} to get error equations
\begin{align*}	
(\boldsymbol{\sigma}-\boldsymbol{\sigma}_h,\boldsymbol{\tau}_h)+b_h(\boldsymbol{\tau}_h,\psi_h; I_h^{\curl}\boldsymbol{u}-\boldsymbol{u}_h)&=(I_h^{\curl}\boldsymbol{u}-\boldsymbol{u},\grad\psi_h), 
\\
b_h(I_h^{\rm tn}\boldsymbol{\sigma}-\boldsymbol{\sigma}_h,0;\boldsymbol{v}_h)&=0. 
\end{align*}
Then subtract $(\boldsymbol{\sigma}-I_h^{\rm tn}\boldsymbol{\sigma},\boldsymbol{\tau}_h)$ to get \eqref{eq:error0}.
\end{proof}

\begin{theorem}
Let $(\boldsymbol{\sigma},0,\boldsymbol{u})$ and $(\boldsymbol{\sigma}_h,0,\boldsymbol{u}_h)$ be the solution of the mixed formulation~\eqref{quadcurlmixed1}-\eqref{quadcurlmixed2} and the mixed finite element method \eqref{distribumfem1}-\eqref{distribumfem2} respectively. Assume $\boldsymbol{\sigma}\in H^k(\Omega;\mathbb T)$ and $\boldsymbol{u}, \curl\boldsymbol{u}\in H^k(\Omega;\mathbb R^3)$. Then
\begin{align}
\label{eq:errorestimate1}
\|\boldsymbol{\sigma}-\boldsymbol{\sigma}_h\|+\|I_h^{\curl}\boldsymbol{u}-\boldsymbol{u}_h\|_{H((\grad\curl)_h)}&\lesssim h^k(|\boldsymbol{\sigma}|_k+|\boldsymbol{u}|_k), \\
\label{eq:errorestimate2}
\|\boldsymbol{u}-\boldsymbol{u}_h\|_{H(\curl)}+h|\curl(\boldsymbol{u}-\boldsymbol{u}_h)|_{1,h}&\lesssim h^k(|\boldsymbol{\sigma}|_k+|\boldsymbol{u}|_k+|\curl\boldsymbol{u}|_k).
\end{align}
\end{theorem}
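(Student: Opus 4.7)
The plan is to specialize the two discrete stability estimates \eqref{eq:discretestability1} and \eqref{eq:discretestability2} to the discrete error triple $(I_h^{\rm tn}\boldsymbol{\sigma}-\boldsymbol{\sigma}_h,0,I_h^{\curl}\boldsymbol{u}-\boldsymbol{u}_h)$, substitute the error identity \eqref{eq:error0} into the numerator $A_h$, bound the result by Cauchy--Schwarz, and conclude using standard interpolation estimates for $\boldsymbol{\sigma}-I_h^{\rm tn}\boldsymbol{\sigma}$ and $\boldsymbol{u}-I_h^{\curl}\boldsymbol{u}$ together with triangle inequalities.

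For \eqref{eq:errorestimate1}, I would apply \eqref{eq:discretestability2} to the discrete error triple. By \eqref{eq:error0} the numerator reduces to $(I_h^{\rm tn}\boldsymbol{\sigma}-\boldsymbol{\sigma},\boldsymbol{\tau}_h)+(I_h^{\curl}\boldsymbol{u}-\boldsymbol{u},\grad\psi_h)$, which by Cauchy--Schwarz is dominated by $(\|I_h^{\rm tn}\boldsymbol{\sigma}-\boldsymbol{\sigma}\|+\|I_h^{\curl}\boldsymbol{u}-\boldsymbol{u}\|)(\|\boldsymbol{\tau}_h\|+|\psi_h|_1)$; the second factor sits under the denominator in \eqref{eq:discretestability2}. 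Combined with the standard $L^2$-interpolation bounds $\|I_h^{\rm tn}\boldsymbol{\sigma}-\boldsymbol{\sigma}\|\lesssim h^k|\boldsymbol{\sigma}|_k$ and $\|I_h^{\curl}\boldsymbol{u}-\boldsymbol{u}\|\lesssim h^k|\boldsymbol{u}|_k$, followed by a triangle inequality to pass from $I_h^{\rm tn}\boldsymbol{\sigma}-\boldsymbol{\sigma}_h$ back to $\boldsymbol{\sigma}-\boldsymbol{\sigma}_h$, this yields \eqref{eq:errorestimate1}.

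For the $H(\curl)$ part of \eqref{eq:errorestimate2} I would repeat the same argument with \eqref{eq:discretestability1}; the same Cauchy--Schwarz bound still fits under its denominator because $\|\boldsymbol{\tau}_h\|\leq\|\boldsymbol{\tau}_h\|_{H^{-1}((\curl\div)_h)}$. This produces $\|I_h^{\curl}\boldsymbol{u}-\boldsymbol{u}_h\|_{H(\curl)}\lesssim h^k(|\boldsymbol{\sigma}|_k+|\boldsymbol{u}|_k)$, and together with the standard estimate $\|\boldsymbol{u}-I_h^{\curl}\boldsymbol{u}\|_{H(\curl)}\lesssim h^k(|\boldsymbol{u}|_k+|\curl\boldsymbol{u}|_k)$, a triangle inequality delivers the $H(\curl)$ bound on $\boldsymbol{u}-\boldsymbol{u}_h$. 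For the term $h|\curl(\boldsymbol{u}-\boldsymbol{u}_h)|_{1,h}$ I would split it as $h|\curl(\boldsymbol{u}-I_h^{\curl}\boldsymbol{u})|_{1,h}+h|\curl(I_h^{\curl}\boldsymbol{u}-\boldsymbol{u}_h)|_{1,h}$; the discrete piece is controlled via the norm equivalence \eqref{eq:gradcurlhnormequiv} and \eqref{eq:errorestimate1}, yielding $h^{k+1}(|\boldsymbol{\sigma}|_k+|\boldsymbol{u}|_k)$, which is absorbed into $h^k(\cdots)$.

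The main technical point is the interpolation piece $h|\curl(\boldsymbol{u}-I_h^{\curl}\boldsymbol{u})|_{1,h}$. Since $\curl\boldsymbol{u}\in H^k(\Omega)\subset H^1(\Omega)$ has vanishing face jumps, we have $\llbracket\curl(\boldsymbol{u}-I_h^{\curl}\boldsymbol{u})\rrbracket_F=-\llbracket\curl I_h^{\curl}\boldsymbol{u}\rrbracket_F$, and a scaled trace inequality on each face, applied to $\curl(\boldsymbol{u}-I_h^{\curl}\boldsymbol{u})$ on the two adjacent elements together with the standard $H(\curl)$-interpolation error, produces the required $O(h^{k-1}|\curl\boldsymbol{u}|_k)$ control; the element-wise piecewise $H^1$-seminorm of $\curl(\boldsymbol{u}-I_h^{\curl}\boldsymbol{u})$ has the same rate by the commuting-diagram identity $\curl I_h^{\curl}\boldsymbol{u}=I_h^{\div}\curl\boldsymbol{u}$ and standard approximation. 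Multiplying by $h$ turns these into $h^k|\curl\boldsymbol{u}|_k$, closing \eqref{eq:errorestimate2}. Everything else is a routine consequence of the already-proved discrete stability and polynomial approximation.
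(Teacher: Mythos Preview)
Your proposal is correct and follows essentially the same route as the paper: apply the two discrete stability estimates \eqref{eq:discretestability1}--\eqref{eq:discretestability2} to the error triple $(I_h^{\rm tn}\boldsymbol{\sigma}-\boldsymbol{\sigma}_h,0,I_h^{\curl}\boldsymbol{u}-\boldsymbol{u}_h)$, substitute the error identity \eqref{eq:error0}, and finish with the triangle inequality, the norm equivalence \eqref{eq:gradcurlhnormequiv}, and interpolation estimates. The paper's own proof is a single sentence; your detailed handling of $h|\curl(\boldsymbol{u}-I_h^{\curl}\boldsymbol{u})|_{1,h}$ via the commuting property $\curl I_h^{\curl}=I_h^{\div}\curl$ and scaled trace inequalities is exactly the kind of standard interpolation-error argument the paper is invoking implicitly.
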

\begin{proof}
It follows from	the stability results \eqref{eq:discretestability1}-\eqref{eq:discretestability2} and \eqref{eq:error0} that
\begin{align*}
&\quad\|I_h^{\rm tn}\boldsymbol{\sigma}-\boldsymbol{\sigma}_h\|+\|I_h^{\curl}\boldsymbol{u}-\boldsymbol{u}_h\|_{H(\curl)}+\|I_h^{\curl}\boldsymbol{u}-\boldsymbol{u}_h\|_{H((\grad\curl)_h)} \\
&\lesssim \|\boldsymbol{\sigma}-I_h^{\rm tn}\boldsymbol{\sigma}\|+\|\boldsymbol{u}-I_h^{\curl}\boldsymbol{u}\|.
\end{align*}
Hence \eqref{eq:errorestimate1}-\eqref{eq:errorestimate2} follow from the triangle inequality, the norm equivalence \eqref{eq:gradcurlhnormequiv} and interpolation error estimates.
\end{proof}

Using the usual duality argument, we can additionally derive the superconvergence of $\|\curl(I_h^{\curl}\boldsymbol{u}-\boldsymbol{u}_h)\|_{0}$. 
Let $\widetilde{\boldsymbol{u}}$ be the solution of the
auxiliary problem:
\begin{equation}\label{bigradcurlproblemdual}
	\begin{aligned}
		&\left\{\begin{aligned}
			-\curl\Delta\curl\widetilde{\boldsymbol{u}}&=-\curl^2(I_h^{\curl}\boldsymbol{u}-\boldsymbol{u}_h) & & \text { in } \Omega, \\
			\operatorname{div}\widetilde{\boldsymbol{u}} &=0 & & \text { in } \Omega, \\
			\widetilde{\boldsymbol{u}} \times \boldsymbol{n}=\operatorname{curl}\widetilde{\boldsymbol{u}}\times \boldsymbol{n} &=0 & & \text { on } \partial \Omega.
		\end{aligned}\right.
	\end{aligned}
\end{equation}
We assume that
$\widetilde{\boldsymbol{u}}\in H^1(\Omega;\mathbb R^3)\cap H_0(\curl,\Omega)$ with
the bound
\begin{equation}
\label{bigradcurldualregularity}
\|\widetilde{\boldsymbol{\sigma}}\|_{1}+\|\widetilde{\boldsymbol{u}}\|_{1}\lesssim \|\curl(I_h^{\curl}\boldsymbol{u}-\boldsymbol{u}_h)\|,
\end{equation}
where $\widetilde{\boldsymbol{\sigma}}:=\grad\curl\widetilde{\boldsymbol{u}}$.
When $\Omega$ is convex, regularity \eqref{bigradcurldualregularity} has been obtained in~\cite[Lemma A.1]{Huang2020}.

\begin{theorem}
Let $(\boldsymbol{\sigma},0,\boldsymbol{u})$ and $(\boldsymbol{\sigma}_h,0,\boldsymbol{u}_h)$ be the solution of the mixed formulation~\eqref{quadcurlmixed1}-\eqref{quadcurlmixed2} and the mixed finite element method \eqref{distribumfem1}-\eqref{distribumfem2} respectively.
Assume the regularity condition \eqref{bigradcurldualregularity} holds, $\bs f\in L^2(\Omega;\mathbb R^3)$, $\boldsymbol{\sigma}\in H^k(\Omega;\mathbb T)$ and $\boldsymbol{u}, \curl\boldsymbol{u}\in H^k(\Omega;\mathbb R^3)$.
Then
\begin{equation}\label{eq:curlsupererrorestimate}
\|\curl(I_h^{\curl}\boldsymbol{u}-\boldsymbol{u}_h)\|\lesssim h^{k+1}(|\boldsymbol{\sigma}|_k+|\boldsymbol{u}|_k+\delta_{k1}\|\boldsymbol{f}\|),
\end{equation}
where $\delta_{11}=1$ and $\delta_{k1}=0$ for $k\neq1$.
\end{theorem}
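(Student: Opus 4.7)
The plan is an Aubin--Nitsche duality argument combined with a discrete version of the dual problem \eqref{bigradcurlproblemdual}. Because $\xi:=I_h^{\curl}\bs u-\bs u_h\in \mathring{\mathbb V}_{(k,\ell),h}^{\curl}\subset H_0(\curl,\Omega)$, the right-hand side $-\curl^2\xi$ belongs to $H^{-1}(\div,\Omega)\cap\ker(\div)$, so \eqref{bigradcurlproblemdual} admits a unique solution $(\widetilde{\bs \sigma},\widetilde{\bs u})$ with $\widetilde{\bs \sigma}=\grad\curl\widetilde{\bs u}$ and the regularity bound \eqref{bigradcurldualregularity} in force. I then introduce the distributional mixed finite element approximation $(\widetilde{\bs \sigma}_h,0,\widetilde{\bs u}_h)\in\Sigma^{\rm tn}_{k-1,h}\times\mathring{\mathbb V}_{\ell+1,h}^{\grad}\times\mathring{\mathbb V}_{(k,\ell),h}^{\curl}$ of this dual problem, characterised by $(\widetilde{\bs \sigma}_h,\bs \tau_h)+b_h(\bs \tau_h,\psi_h;\widetilde{\bs u}_h)=0$ and $b_h(\widetilde{\bs \sigma}_h,0;\bs v_h)=(\curl\xi,\curl\bs v_h)$. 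Specialising \eqref{eq:errorestimate1} to this dual discretisation (whose exact solution is only $H^1$-regular) and invoking \eqref{bigradcurldualregularity} gives $\|\widetilde{\bs \sigma}_h-\widetilde{\bs \sigma}\|\lesssim h(|\widetilde{\bs \sigma}|_1+|\widetilde{\bs u}|_1)\lesssim h\|\curl\xi\|$.

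The key step is to test the primal error identity \eqref{eq:error0} at $(\bs \tau_h,\psi_h,\bs v_h)=(\widetilde{\bs \sigma}_h,0,\widetilde{\bs u}_h)$. Writing $\eta:=I_h^{\rm tn}\bs \sigma-\bs \sigma_h$, the first discrete-dual equation tested with $(\bs \tau_h,\psi_h)=(\eta,0)$ yields $(\eta,\widetilde{\bs \sigma}_h)+b_h(\eta,0;\widetilde{\bs u}_h)=0$, while the second discrete-dual equation at $\bs v_h=\xi$ yields $b_h(\widetilde{\bs \sigma}_h,0;\xi)=\|\curl\xi\|^2$. Because $\psi_h=0$, the right-hand side of \eqref{eq:error0} collapses to $(I_h^{\rm tn}\bs \sigma-\bs \sigma,\widetilde{\bs \sigma}_h)$, producing the duality identity
\begin{equation*}
\|\curl\xi\|^2=(I_h^{\rm tn}\bs \sigma-\bs \sigma,\widetilde{\bs \sigma}_h).
\end{equation*}
Splitting $\widetilde{\bs \sigma}_h=(\widetilde{\bs \sigma}_h-\widetilde{\bs \sigma})+\widetilde{\bs \sigma}$, Cauchy--Schwarz and the dual error estimate control the first piece by $\|I_h^{\rm tn}\bs \sigma-\bs \sigma\|\cdot h\|\curl\xi\|\lesssim h^{k+1}|\bs \sigma|_k\|\curl\xi\|$. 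For the second piece $(I_h^{\rm tn}\bs \sigma-\bs \sigma,\widetilde{\bs \sigma})$ when $k\ge2$, the volume degrees of freedom \eqref{eq:curdivfemdof2} deliver piecewise $L^2$-orthogonality of $I_h^{\rm tn}\bs \sigma-\bs \sigma$ to $\mathbb P_{k-2}(T;\mathbb T)$; subtracting the element-wise $L^2$-projection of $\widetilde{\bs \sigma}$ onto $\mathbb P_{k-2}$ and applying standard approximation again returns $\lesssim h^{k+1}|\bs \sigma|_k\|\curl\xi\|$, which is enough to conclude for $k\ge2$ after dividing by $\|\curl\xi\|$.

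The hard part is the lowest-order case $k=1$, in which $\mathbb P_{k-2}=\{0\}$ supplies no volume orthogonality and the additional $\delta_{k1}\|\bs f\|$ contribution has to be produced. For this case I would integrate $(I_h^{\rm tn}\bs \sigma-\bs \sigma,\grad\curl\widetilde{\bs u})$ by parts element by element: the piecewise constancy of $I_h^{\rm tn}\bs \sigma\in\Sigma^{\rm tn}_{0,h}$ kills its element-wise divergence; the tangential-normal continuity intrinsic to $\Sigma^{\rm tn}$ wipes out the tangential components of the face residuals; the bulk collapses via $\curl\div\bs \sigma=-\bs f$ and $\widetilde{\bs u}\times\bs n=0$ to $-(\bs f,\widetilde{\bs u})$; and only $\sum_{F\in\mathring{\mathcal F}_h}\int_F\llbracket\bs n^{\intercal}(I_h^{\rm tn}\bs \sigma)\bs n\rrbracket\rot_F\widetilde{\bs u}$ is left on the interior faces. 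The technical heart is then to extract an extra factor of $h$ from $(\bs f,\widetilde{\bs u})$ by exploiting the $H^2$-regularity of $\widetilde{\bs u}$ implied by \eqref{bigradcurldualregularity} together with elliptic regularity of div-curl systems on convex $\Omega$, a second-kind N\'ed\'elec approximation of $\widetilde{\bs u}$ that reproduces linear vector fields, and the primal identity $\curl\div\bs \sigma=-\bs f$; this gymnastics is the source of the $\delta_{k1}\|\bs f\|$ term. The residual normal-normal face jumps are then absorbed by trace and interpolation inequalities paired with the implicit $H^{3/2}$-regularity of $\rot_F\widetilde{\bs u}$. Gathering all the pieces and dividing by $\|\curl\xi\|$ yields \eqref{eq:curlsupererrorestimate}.
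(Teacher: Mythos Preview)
Your duality identity $\|\curl\xi\|^2=(I_h^{\rm tn}\bs\sigma-\bs\sigma,\widetilde{\bs\sigma}_h)$ is correct, and the split $\widetilde{\bs\sigma}_h=(\widetilde{\bs\sigma}_h-\widetilde{\bs\sigma})+\widetilde{\bs\sigma}$ together with the volume orthogonality from \eqref{eq:curdivfemdof2} handles the first piece and the case $k\ge2$ cleanly. In fact, a short computation using \eqref{eq:curdivhcdprop} and \eqref{eq:curdivhIhcurl} shows that your remaining term coincides with the paper's:
\[
(I_h^{\rm tn}\bs\sigma-\bs\sigma,\widetilde{\bs\sigma})
=-b_h(I_h^{\rm tn}\bs\sigma-\bs\sigma,0;\widetilde{\bs u})
=b_h(\bs\sigma,0;\widetilde{\bs u}-I_h^{\curl}\widetilde{\bs u}),
\]
so the two approaches are essentially the same up to this point.

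The genuine gap is in your treatment of $k=1$. The integration by parts you outline lands on
\[
(I_h^{\rm tn}\bs\sigma-\bs\sigma,\widetilde{\bs\sigma})
=-(\bs f,\widetilde{\bs u})+\sum_{F\in\mathring{\mathcal F}_h}\big(\llbracket\bs n^{\intercal}(I_h^{\rm tn}\bs\sigma)\bs n\rrbracket,\rot_F\widetilde{\bs u}\big)_F,
\]
and neither term is $O(h^2)$: $(\bs f,\widetilde{\bs u})$ has no mesh-size factor at all, and the normal--normal face jump carries no orthogonality. Your proposed fix invokes $H^2$-regularity of $\widetilde{\bs u}$, but the theorem only assumes \eqref{bigradcurldualregularity}, which controls $\|\widetilde{\bs\sigma}\|_1+\|\widetilde{\bs u}\|_1$; and even granting $\widetilde{\bs u}\in H^2$ would only shave one $h$ from $(\bs f,\widetilde{\bs u}-I_h\widetilde{\bs u})$, leaving $(\bs f,I_h\widetilde{\bs u})$ and the face term unaccounted for.

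The idea you are missing is the de Rham lifting used in the paper: write $b_h(\bs\sigma,0;\widetilde{\bs u}-I_h^{\curl}\widetilde{\bs u})=(\div\bs\sigma,\curl(\widetilde{\bs u}-I_h^{\curl}\widetilde{\bs u}))$, use the regular decomposition to produce $\bs w\in H_0^1(\Omega;\mathbb R^3)$ with $\curl\bs w=\curl(\widetilde{\bs u}-I_h^{\curl}\widetilde{\bs u})$ and $\|\bs w\|_1\lesssim h|\curl\widetilde{\bs u}|_1$, and observe that the commuting diagram gives $\curl I_h^{\curl}\bs w=0$. Then one can integrate by parts \emph{globally},
\[
(\div\bs\sigma,\curl\bs w)=(\div\bs\sigma,\curl(\bs w-I_h^{\curl}\bs w))=-(\bs f,\bs w-I_h^{\curl}\bs w)\lesssim h\|\bs f\|\|\bs w\|_1\lesssim h^2\|\bs f\|\,\|\curl\xi\|,
\]
which is exactly where the $\delta_{k1}\|\bs f\|$ term comes from. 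Without this lifting you cannot recover both powers of $h$ for $k=1$.
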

\begin{proof}
By the first equation of \eqref{bigradcurlproblemdual}, we have
$$
\|\curl(I_h^{\curl}\boldsymbol{u}-\boldsymbol{u}_h)\|^2=(\div\widetilde{\boldsymbol{\sigma}}, \curl(I_h^{\curl}\boldsymbol{u}-\boldsymbol{u}_h))=b_h(\widetilde{\boldsymbol{\sigma}}, 0; I_h^{\curl}\boldsymbol{u}-\boldsymbol{u}_h).
$$
Then we get from \eqref{eq:curdivhIhcurl} and \eqref{eq:curdivhcdprop} that
$$
\|\curl(I_h^{\curl}\boldsymbol{u}-\boldsymbol{u}_h)\|^2=b_h(I_h^{\rm tn}\widetilde{\boldsymbol{\sigma}}, 0; I_h^{\curl}\boldsymbol{u}-\boldsymbol{u}_h)=b_h(I_h^{\rm tn}\widetilde{\boldsymbol{\sigma}}, 0; \boldsymbol{u}-\boldsymbol{u}_h),
$$
which together with \eqref{quadcurlmixed1} and \eqref{distribumfem1} gives
$$
\|\curl(I_h^{\curl}\boldsymbol{u}-\boldsymbol{u}_h)\|^2= - (\boldsymbol{\sigma}-\boldsymbol{\sigma}_h, I_h^{\rm tn}\widetilde{\boldsymbol{\sigma}}).
$$
On the other side, it follows from \eqref{quadcurlmixed2}, \eqref{distribumfem2} and \eqref{eq:curdivhIhcurl} that
\begin{align*}
- (\boldsymbol{\sigma}-\boldsymbol{\sigma}_h, \widetilde{\boldsymbol{\sigma}})&=- (\boldsymbol{\sigma}-\boldsymbol{\sigma}_h, \grad\curl\widetilde{\boldsymbol{u}})=b_h(\boldsymbol{\sigma}-\boldsymbol{\sigma}_h, 0; \widetilde{\boldsymbol{u}}) \\
&=b_h(\boldsymbol{\sigma}-\boldsymbol{\sigma}_h, 0; \widetilde{\boldsymbol{u}}-I_h^{\curl}\widetilde{\boldsymbol{u}})=b_h(\boldsymbol{\sigma}, 0; \widetilde{\boldsymbol{u}}-I_h^{\curl}\widetilde{\boldsymbol{u}}).
\end{align*}
Combining the last two equations and the estimate of $I_h^{\rm tn}$ yields
\begin{align}
\|\curl(I_h^{\curl}\boldsymbol{u}-\boldsymbol{u}_h)\|^2&=(\boldsymbol{\sigma}-\boldsymbol{\sigma}_h, \widetilde{\boldsymbol{\sigma}}-I_h^{\rm tn}\widetilde{\boldsymbol{\sigma}})+b_h(\boldsymbol{\sigma}, 0; \widetilde{\boldsymbol{u}}-I_h^{\curl}\widetilde{\boldsymbol{u}})  \notag \\
&\lesssim h\|\boldsymbol{\sigma}-\boldsymbol{\sigma}_h\|\|\widetilde{\boldsymbol{\sigma}}\|_1 +b_h(\boldsymbol{\sigma}, 0; \widetilde{\boldsymbol{u}}-I_h^{\curl}\widetilde{\boldsymbol{u}}). \label{eq:202211221}
\end{align}

When $k\geq 2$, applying \eqref{eq:curdivhIhcurl} again, we get from \eqref{eq:202211221} that 
\begin{align*}
\|\curl(I_h^{\curl}\boldsymbol{u}-\boldsymbol{u}_h)\|^2&
\lesssim h\|\boldsymbol{\sigma}-\boldsymbol{\sigma}_h\|\|\widetilde{\boldsymbol{\sigma}}\|_1+b_h(\boldsymbol{\sigma}-I_h^{\rm tn}\boldsymbol{\sigma}, 0; \widetilde{\boldsymbol{u}}-I_h^{\curl}\widetilde{\boldsymbol{u}}) \\
&\leq h\|\boldsymbol{\sigma}-\boldsymbol{\sigma}_h\|\|\widetilde{\boldsymbol{\sigma}}\|_1 \\
&\quad + \|\boldsymbol{\sigma}-I_h^{\rm tn}\boldsymbol{\sigma}\|_{0,h}|\curl(\widetilde{\boldsymbol{u}}-I_h^{\curl}\widetilde{\boldsymbol{u}})|_{1,h} \\
&\lesssim h(\|\boldsymbol{\sigma}-\boldsymbol{\sigma}_h\|+\|\boldsymbol{\sigma}-I_h^{\rm tn}\boldsymbol{\sigma}\|_{0,h})\|\widetilde{\boldsymbol{\sigma}}\|_1.
\end{align*}
Therefore \eqref{eq:curlsupererrorestimate} follows from \eqref{eq:errorestimate1}, interpolation error estimate of $I_h^{\rm tn}$ and \eqref{bigradcurldualregularity}.

Next consider case $k=1$.
By the de Rham complex~\cite{CostabelMcIntosh2010}, there exists $\boldsymbol{w}\in H_0^1(\Omega;\mathbb R^3)$ such that
$$
\curl\boldsymbol{w}=\curl(\widetilde{\boldsymbol{u}}-I_h^{\curl}\widetilde{\boldsymbol{u}}), \quad \|\boldsymbol{w}\|_1\lesssim \|\curl\boldsymbol{w}\|\lesssim h|\curl\widetilde{\boldsymbol{u}}|_1.
$$
Then apply the commutative property of $I_h^{\curl}$ to get $\curl(I_h^{\curl}\boldsymbol{w})=0$. Employing the second equation of problem~\eqref{quadcurl2ndsystem} and the estimate of $I_h^{\curl}$, we acquire
\begin{align*}	
b_h(\boldsymbol{\sigma}, 0; \widetilde{\boldsymbol{u}}-I_h^{\curl}\widetilde{\boldsymbol{u}})&=(\operatorname{div} \boldsymbol{\sigma}, \curl(\widetilde{\boldsymbol{u}}-I_h^{\curl}\widetilde{\boldsymbol{u}}))=(\operatorname{div} \boldsymbol{\sigma}, \curl({\boldsymbol{w}}-I_h^{\curl}{\boldsymbol{w}})) \\
&= (\curl\operatorname{div} \boldsymbol{\sigma}, {\boldsymbol{w}}-I_h^{\curl}{\boldsymbol{w}})= -(\boldsymbol{f}, {\boldsymbol{w}}-I_h^{\curl}{\boldsymbol{w}}) \\
&\lesssim h\|\boldsymbol{f}\|\|\boldsymbol{w}\|_1 \lesssim h^2\|\boldsymbol{f}\||\curl\widetilde{\boldsymbol{u}}|_1.
\end{align*}
Finally, \eqref{eq:curlsupererrorestimate} follows from \eqref{eq:202211221}, \eqref{eq:errorestimate1} and \eqref{bigradcurldualregularity}.
\end{proof}


We perform numerical experiments to support the theoretical results of the distributional mixed method~\eqref{distribumfem1}-\eqref{distribumfem2}.  Let $\Omega=(0,1)^3$. Choose
the function $\boldsymbol{f}$ in~(\ref{bigradcurlproblem}) such that the exact solution of (\ref{bigradcurlproblem}) is
\[
\boldsymbol{u} = \curl\begin{pmatrix}
\sin^3(\pi x)\sin^3(\pi y)\sin^3(\pi z) \\
\sin^3(\pi x)\sin^3(\pi y)\sin^3(\pi z) \\
0
\end{pmatrix},
\]
and let $\boldsymbol{\sigma}:= \grad\curl\boldsymbol{u}$. 
To break the symmetry, the initial unstructured mesh of $\Omega$ is shown in Fig.~\ref{fig:initmesh}, which is a perturbation of the uniform mesh with $h=1/2$.
Then we take uniform refinement of this initial unstructured triangulation.
\begin{figure}[htbp]
\begin{center}
\includegraphics[width=5cm]{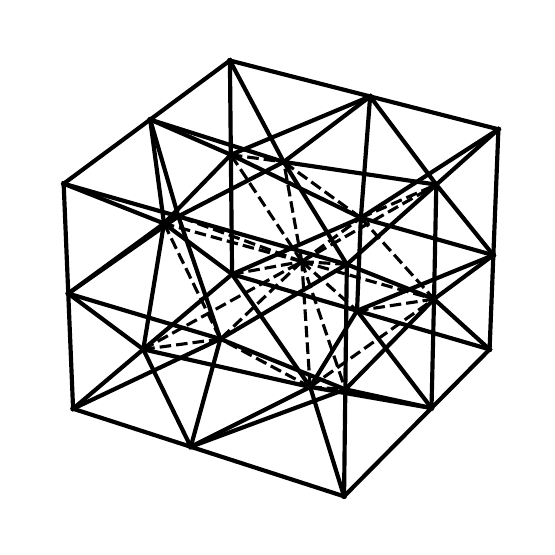}
\caption{An initial perturbed mesh of the uniform mesh with $h=1/2$.}
\label{fig:initmesh}
\end{center}
\end{figure}

We will implement the hybridized version of~\eqref{distribumfem1}-\eqref{distribumfem2}; see~\eqref{hybridishhjtype1}-\eqref{hybridishhjtype2}. Therefore the number of DoFs is $\dim \mathring{\mathbb{V}}^{\curl}_{(k,\ell), h} + \dim\Lambda_{k-1,h}$, where $\Lambda_{k-1,h}$ is the space of Lagrange multiplier. Numerical results of $\|\boldsymbol{\sigma}-\boldsymbol{\sigma}_h\|$, $\|{\boldsymbol{u}}-\boldsymbol{u}_{h}\|$,  $\|\curl({\boldsymbol{u}}-\boldsymbol{u}_{h})\|$ and $\|\grad_{\mathcal T_h}\curl(\boldsymbol{u}-\boldsymbol{u}_h)\|$ of the  distributional mixed method \eqref{distribumfem1}-\eqref{distribumfem2} with $k=1$ and $\ell=0,1$ are shown in Table~\ref{table:k1l01}.  It is observed that $\|\boldsymbol{\sigma}-\boldsymbol{\sigma}_h\|=\mathcal{O}(h)$, and $\|\curl({\boldsymbol{u}}-\boldsymbol{u}_{h})\| = \mathcal{O}(h)$ numerically,   which coincide with the theoretical
error estimates in (\ref{eq:errorestimate1}) and (\ref{eq:errorestimate2}). The error $\|{\boldsymbol{u}}-\boldsymbol{u}_{h}\| = \mathcal{O}(h^{\ell+1})$ is also observed but not included in (\ref{eq:errorestimate1}) and (\ref{eq:errorestimate2}). 
\begin{table}[ht]
		\setlength{\abovecaptionskip}{0.cm}
		\setlength{\belowcaptionskip}{-0.cm}
	\centering
	\caption{Errors for the distributional mixed finite element method~\eqref{distribumfem1}-\eqref{distribumfem2}~with $k=1$.}
	\label{table:k1l01}
	\scalebox{0.91}{
	\renewcommand{\arraystretch}{1.25}		
\begin{tabular}[t]{ccccc}
    \toprule
    $h$ &$2^{-2}$ &$2^{-3}$ &$2^{-4}$ &$2^{-5}$ \\
    \midrule
    \# DoFs for $\ell=0$ & 2,332 & 17,240 & 132,400 & 1,037,408 \\
    \# DoFs for $\ell=1$ & 2,936 & 21,424 & 163,424 & 1,276,096 \\
    $\|\boldsymbol{\sigma}-\boldsymbol{\sigma}_h\|$ &
    1.51E+02 & 8.62E+01 & 4.48E+01 & 2.26E+01 \\
    order &$-$ & 0.81 & 0.95 & 0.99 \\
      $\|\curl(\boldsymbol{u}-\boldsymbol{u}_h)\|$ &   
    1.48E+01 & 5.60E+00 & 2.39E+00 & 1.13E+00 \\
    order &$-$ & 1.40 & 1.23 & 1.08 \\
    $\|\grad_{\mathcal T_h}\curl(\boldsymbol{u}-\boldsymbol{u}_h)\|$ &
    1.46E+02 & 1.46E+02 & 1.46E+02 & 1.46E+02 \\
    order &$-$ & 0 & 0 & 0 \\
    $\|\boldsymbol{u}-\boldsymbol{u}_h\|$ for $\ell=0$
    & 1.49E+00 & 5.23E-01 & 2.19E-01 & 1.03E-01 \\
    order &$-$ & 1.51 & 1.25 & 1.09 \\
  $\|\boldsymbol{u}-\boldsymbol{u}_h\|$ for $\ell=1$ & 
    1.04E+00 & 2.82E-01 & 7.32E-02 & 1.85E-02 \\
    order &$-$ & 1.88 & 1.95 & 1.98 \\
    \bottomrule
\end{tabular}
	}
\end{table}

\subsection{Post-processing}\label{subsec:postprocess}
Post-processing can be also applied to improve the approximation. It follows from the standard procedure of the stable mixed methods and will be briefly reviewed below. 

We will construct a new superconvergent approximation to $\boldsymbol{u}$ in virtue of the optimal result of $\boldsymbol{\sigma}$ in \eqref{eq:errorestimate1}. 
%
Introduce discrete space 
\begin{align*}
\mathbb V_h^{\ast}&:=\{\boldsymbol{v}_h\in L^2(\Omega;\mathbb R^3): \boldsymbol{v}_h|_T \in\mathbb P_{k}(T;\mathbb R^3)+\boldsymbol{x}\times\mathbb P_{k}(T;\mathbb R^3) \textrm{ for } T \in \mathcal{T}_{h}\}.
\end{align*}
Let $\mathring{\mathbb P}_{k+1}(T)$ be the subspace of $\mathbb P_{k+1}(T)$ with vanishing funcition values at vertices of $T$.
For each $T\in\mathcal{T}_h$, define the new approximation $\boldsymbol{u}_h^{\ast}\in \mathbb V_h^{\ast}$ to $\boldsymbol{u}$ piecewisely as a solution of the following problem: 
\begin{align}
(\boldsymbol{u}_h^{\ast}\cdot\boldsymbol t, q)_e &=(\boldsymbol u_h\cdot\boldsymbol t, q)_e \quad\;\forall~q\in\mathbb P_{0}(e),  e\in\mathcal E(T), \label{eq:postprocess1}\\
(\grad\curl\boldsymbol{u}_h^{\ast}, \boldsymbol{q})_T&=(\boldsymbol{\sigma}_h, \boldsymbol{q})_T\qquad\forall~\boldsymbol{q}\in\grad\curl\mathbb P_{k+1}(T;\mathbb R^3), \label{eq:postprocess2}\\
(\boldsymbol{u}_h^{\ast}, \boldsymbol{q})_T&=(\boldsymbol{u}_h, \boldsymbol{q})_T\qquad\forall~\boldsymbol{q}\in\grad\mathring{\mathbb P}_{k+1}(T). \label{eq:postprocess3}
\end{align}
It is easy to verify that the local problem \eqref{eq:postprocess1}-\eqref{eq:postprocess3} is well-posed.


\begin{theorem}\label{thm:uuhstar1}
Let $(\boldsymbol{\sigma},0,\boldsymbol{u})$ and $(\boldsymbol{\sigma}_h,0,\boldsymbol{u}_h)$ be the solution of the mixed formulation~\eqref{quadcurlmixed1}-\eqref{quadcurlmixed2} and the mixed finite element method \eqref{distribumfem1}-\eqref{distribumfem2} respectively. Assume $\boldsymbol{\sigma}\in H^k(\Omega;\mathbb T)$ and $\boldsymbol{u}, \curl\boldsymbol{u}\in H^{k+1}(\Omega;\mathbb R^3)$. Then
\begin{equation}
\label{uhast:errorestimate1}
\|\grad_{\mathcal T_h}\curl_{\mathcal T_h}(\boldsymbol{u}-\boldsymbol{u}_h^{\ast})\|\lesssim h^k(|\boldsymbol{\sigma}|_k+|\boldsymbol{u}|_k+|\curl\boldsymbol{u}|_{k+1}),
\end{equation}
Further assume the regularity condition \eqref{bigradcurldualregularity} holds and $\bs f\in L^2(\Omega;\mathbb R^3)$. Then
\begin{equation}
\label{uhast:errorestimate2}
\|\curl_{\mathcal T_h}(\boldsymbol{u}-\boldsymbol{u}_h^{\ast})\|\lesssim h^{k+1}(|\boldsymbol{\sigma}|_k+|\boldsymbol{u}|_k+|\curl\boldsymbol{u}|_{k+1}+\delta_{k1}\|\boldsymbol{f}\|).
\end{equation}
\end{theorem}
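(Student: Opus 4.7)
The plan is to follow the standard post-processing analysis for mixed methods by introducing a continuous interpolant into $\mathbb V_h^\ast$ and then comparing it to $\boldsymbol u_h^\ast$. First I would define $\boldsymbol R_h\boldsymbol u\in\mathbb V_h^\ast$ element-wise by the same three conditions \eqref{eq:postprocess1}--\eqref{eq:postprocess3} but with the continuous pair $(\boldsymbol u,\boldsymbol\sigma)$ replacing the discrete pair $(\boldsymbol u_h,\boldsymbol\sigma_h)$. Well-posedness of this local problem is equivalent to the unisolvency already guaranteed by \eqref{eq:postprocess1}--\eqref{eq:postprocess3}. Since $\mathbb P_{k+1}(T;\mathbb R^3)\subset \mathbb V_h^\ast|_T=\mathbb P_k(T;\mathbb R^3)+\boldsymbol x\times\mathbb P_k(T;\mathbb R^3)$, a Bramble--Hilbert/scaling argument on the reference element gives the approximation estimates
\[
\|\grad\curl(\boldsymbol u-\boldsymbol R_h\boldsymbol u)\|_T\lesssim h_T^{k}|\curl\boldsymbol u|_{k+1,T},\qquad
\|\curl(\boldsymbol u-\boldsymbol R_h\boldsymbol u)\|_T\lesssim h_T^{k+1}|\curl\boldsymbol u|_{k+1,T}.
\]

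Next I would analyse the discrete part $\boldsymbol e_h:=\boldsymbol R_h\boldsymbol u-\boldsymbol u_h^\ast\in\mathbb V_h^\ast$. Subtracting \eqref{eq:postprocess1}--\eqref{eq:postprocess3} from the corresponding conditions defining $\boldsymbol R_h\boldsymbol u$, the local DoFs of $\boldsymbol e_h$ on $T$ are exactly the edge moments of $(\boldsymbol u-\boldsymbol u_h)\cdot\boldsymbol t$, the volume pairings $(\boldsymbol\sigma-\boldsymbol\sigma_h,\grad\curl\mathbb P_{k+1}(T;\mathbb R^3))_T$, and $(\boldsymbol u-\boldsymbol u_h,\grad\mathring{\mathbb P}_{k+1}(T))_T$. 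Because the N\'ed\'elec interpolant preserves edge moments, the first family equals the corresponding moments of $I_h^\curl\boldsymbol u-\boldsymbol u_h$, opening the door to using the superconvergent estimate \eqref{eq:curlsupererrorestimate}. A standard scaling/norm-equivalence on the finite-dimensional space $\mathbb V_h^\ast|_T$ then yields
\[
\|\grad\curl\boldsymbol e_h\|_T\lesssim h_T^{-3/2}\|(I_h^\curl\boldsymbol u-\boldsymbol u_h)\cdot\boldsymbol t\|_{\partial T}+\|\boldsymbol\sigma-\boldsymbol\sigma_h\|_T+h_T^{-2}\|\boldsymbol u-\boldsymbol u_h\|_T,
\]
together with an analogous bound for $\|\curl\boldsymbol e_h\|_T$ in which every term gains one extra power of $h_T$.

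Summing over $T$ and applying a trace inequality on edges, the first estimate \eqref{uhast:errorestimate1} follows by combining Step 1 with the $\mathcal O(h^k)$ a priori bounds \eqref{eq:errorestimate1}--\eqref{eq:errorestimate2}. For the superconvergent bound \eqref{uhast:errorestimate2}, the edge-moment term is handled via the superconvergence \eqref{eq:curlsupererrorestimate}, and the volume term involving $\boldsymbol\sigma-\boldsymbol\sigma_h$ picks up an extra $h$ directly from the scaling.

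The main obstacle is the third family of DoFs $(\boldsymbol u-\boldsymbol u_h,\grad q)_T$ for $q\in\mathring{\mathbb P}_{k+1}(T)$: a naive use of $\|\boldsymbol u-\boldsymbol u_h\|=\mathcal O(h^k)$ loses an $h$ and degrades the rate in \eqref{uhast:errorestimate2}. The remedy I would pursue is integration by parts, $(\boldsymbol u-\boldsymbol u_h,\grad q)_T=-(\div(\boldsymbol u-\boldsymbol u_h),q)_T+(\boldsymbol n\cdot(\boldsymbol u-\boldsymbol u_h),q)_{\partial T}$, exploiting $\div\boldsymbol u=0$ so that the volume contribution reduces to $(\div\boldsymbol u_h,q)_T$, which can be controlled by the already established error estimates (and, in the $k=1$ case, by the $\|\boldsymbol f\|$-dependent term that appears in \eqref{eq:curlsupererrorestimate}), while the boundary trace is absorbed into the edge/face contributions at the required rate. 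This bookkeeping, together with the choice of $q\in\mathring{\mathbb P}_{k+1}$ vanishing at vertices to sharpen the trace estimate, is where the delicate analysis lies.
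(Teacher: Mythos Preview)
Your overall architecture (interpolant $\boldsymbol R_h\boldsymbol u$ plus discrete remainder $\boldsymbol e_h=\boldsymbol R_h\boldsymbol u-\boldsymbol u_h^\ast$) is fine, but the ``main obstacle'' you isolate is a phantom, and the workaround you propose would not work.  Two points:

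\smallskip
\noindent\textbf{1. The third DoF family is irrelevant for $\curl$ quantities.}  The space $\mathbb V_h^\ast|_T=\mathbb P_k(T;\mathbb R^3)+\boldsymbol x\times\mathbb P_k(T;\mathbb R^3)$ is precisely the first--kind N\'ed\'elec space $\mathcal N^1_{k+1}(T)$ (so your inclusion $\mathbb P_{k+1}\subset\mathbb V_h^\ast|_T$ is actually false, though the $\curl$ and $\grad\curl$ approximation rates you quote are still correct).  If $\boldsymbol v\in\mathcal N^1_{k+1}(T)$ has vanishing DoFs \eqref{eq:postprocess1}--\eqref{eq:postprocess2}, then testing \eqref{eq:postprocess2} with $\boldsymbol q=\grad\curl\boldsymbol v$ gives $\grad\curl\boldsymbol v=0$, so $\curl\boldsymbol v$ is constant; the vanishing edge moments \eqref{eq:postprocess1} and Stokes' theorem then force $\curl\boldsymbol v=0$.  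Hence the DoFs \eqref{eq:postprocess3} parametrise exactly $\grad\mathring{\mathbb P}_{k+1}(T)\subset\ker\curl$, and both $\curl\boldsymbol e_h$ and $\grad\curl\boldsymbol e_h$ are determined by the DoFs \eqref{eq:postprocess1}--\eqref{eq:postprocess2} alone.  The term $h_T^{-2}\|\boldsymbol u-\boldsymbol u_h\|_T$ simply should not appear in your scaling bound; once you drop it, no bookkeeping with $(\boldsymbol u-\boldsymbol u_h,\grad q)_T$ is needed at all.  Your proposed integration by parts would in any case fail: there is no control on the normal trace $\boldsymbol n\cdot(\boldsymbol u-\boldsymbol u_h)$ or on $\div\boldsymbol u_h$ in this $H(\curl)$ setting.

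\smallskip
\noindent\textbf{2. The paper exploits this directly.}  Rather than a full DoF norm equivalence, the paper sets $\boldsymbol w=(I-I_{T,1}^{\curl})(I_{T,k+1}^{\curl}\boldsymbol u-\boldsymbol u_h^\ast)$ and tests \eqref{eq:postprocess2} with $\boldsymbol q=\grad\curl\boldsymbol w$; this immediately gives $|\curl\boldsymbol w|_{1,T}\le |\curl(\boldsymbol u-I_{T,k+1}^{\curl}\boldsymbol u)|_{1,T}+\|\boldsymbol\sigma-\boldsymbol\sigma_h\|_T$, which yields \eqref{uhast:errorestimate1}.  For \eqref{uhast:errorestimate2} the split via $I_{T,1}^{\curl}$ is the key: the part $\boldsymbol w$ satisfies $I_{T,1}^{\curl}\boldsymbol w=0$ so $\|\curl\boldsymbol w\|_T\lesssim h_T|\curl\boldsymbol w|_{1,T}$, while by \eqref{eq:postprocess1} the lowest-order edge DoFs of $I_{T,k+1}^{\curl}\boldsymbol u-\boldsymbol u_h^\ast$ coincide with those of $I_h^{\curl}\boldsymbol u-\boldsymbol u_h$, giving $\|\curl I_{T,1}^{\curl}(I_{T,k+1}^{\curl}\boldsymbol u-\boldsymbol u_h^\ast)\|_T\lesssim\|\curl(I_h^{\curl}\boldsymbol u-\boldsymbol u_h)\|_T$, and the superconvergence \eqref{eq:curlsupererrorestimate} finishes the job.
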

\begin{proof}
Denote $\boldsymbol{w}=(I-I_{T,1}^{\curl})(I_{T, k+1}^{\curl}\boldsymbol{u}-\boldsymbol{u}_h^{\ast})\in\mathbb P_{k}(T;\mathbb R^3)+\boldsymbol{x}\times\mathbb P_{k}(T;\mathbb R^3)$. From the first equation of problem~\eqref{quadcurl2ndsystem} and \eqref{eq:postprocess2} with $\boldsymbol{q}=\grad\curl\boldsymbol{w}$, it follows that
\begin{align*}	
|\curl\boldsymbol{w}|_{1,T}^2&=(\grad\curl(I_{T, k+1}^{\curl}\boldsymbol{u}-\boldsymbol{u}_h^{\ast}), \grad\curl\boldsymbol{w})_T \\
&=(\grad\curl(I_{T, k+1}^{\curl}\boldsymbol{u}-\boldsymbol{u}), \grad\curl\boldsymbol{w})_T +(\boldsymbol{\sigma}-\boldsymbol{\sigma}_h, \grad\curl\boldsymbol{w})_T,
\end{align*}
which means
\begin{equation}\label{eq:20221122}	
|\curl\boldsymbol{w}|_{1,T}\leq |\curl(I_{T, k+1}^{\curl}\boldsymbol{u}-\boldsymbol{u})|_{1,T} + \|\boldsymbol{\sigma}-\boldsymbol{\sigma}_h\|_{0,T}.
\end{equation}
Then
$$
|\curl(\boldsymbol{u}-\boldsymbol{u}_h^{\ast})|_{1,T}\leq 2|\curl(I_{T, k+1}^{\curl}\boldsymbol{u}-\boldsymbol{u})|_{1,T} + \|\boldsymbol{\sigma}-\boldsymbol{\sigma}_h\|_{0,T}.
$$
Hence \eqref{uhast:errorestimate1} follows from \eqref{eq:errorestimate1} and the error estimate of $I_{T, k+1}^{\curl}$.

Next we derive \eqref{uhast:errorestimate2}.
It is easy to see that $I_{T,1}^{\curl}\boldsymbol{w}=0$. By the error estimate of $I_{T,1}^{\curl}$,
\begin{equation*}
\|\curl\boldsymbol{w}\|_{0,T}=\|\curl(\boldsymbol{w}-I_{T,1}^{\curl}\boldsymbol{w})\|_{0,T}\lesssim h_K|\curl\boldsymbol{w}|_{1,T},
\end{equation*}
which together with \eqref{eq:20221122} gives
\[
\|\curl\boldsymbol{w}\|_{0,T}\lesssim h_K|\curl\boldsymbol{w}|_{1,T}\lesssim h_T|\curl(I_{T, k+1}^{\curl}\boldsymbol{u}-\boldsymbol{u})|_{1,T} + h_T\|\boldsymbol{\sigma}-\boldsymbol{\sigma}_h\|_{0,T}.
\]
On the other side, by \eqref{eq:postprocess1}, the commutative property of $I_{T,1}^{\curl}$ and the norm equivalence, we have
\begin{align*}
\|\curl\big(I_{T,1}^{\curl}(I_{T, k+1}^{\curl}\boldsymbol{u}-\boldsymbol{u}_h^{\ast})\big)\|_{0,T}&=\|\curl\big(I_{T,1}^{\curl}(I_h^{\curl}\boldsymbol{u}-\boldsymbol{u}_h))\|_{0,T} \\
&\lesssim \|\curl(I_h^{\curl}\boldsymbol{u}-\boldsymbol{u}_h)\|_{0,T}. \notag
\end{align*}
Then we get from the last two inequalities that
\begin{align*}
\|\curl(\boldsymbol{u}-\boldsymbol{u}_h^{\ast})\|_{0,T}& \leq \|\curl(\boldsymbol{u}-I_{T, k+1}^{\curl}\boldsymbol{u})\|_{0,T}+\|\curl\boldsymbol{w}\|_{0,T}\\
&\quad + \|\curl(I_{T,1}^{\curl}(I_{T, k+1}^{\curl}\boldsymbol{u}-\boldsymbol{u}_h^{\ast}))\|_{0,T} \\
& \lesssim \|\curl(\boldsymbol{u}-I_{T, k+1}^{\curl}\boldsymbol{u})\|_{0,T} + h_T|\curl(I_{T, k+1}^{\curl}\boldsymbol{u}-\boldsymbol{u})|_{1,T} \\
&\quad + h_T\|\boldsymbol{\sigma}-\boldsymbol{\sigma}_h\|_{0,T} + \|\curl(I_h^{\curl}\boldsymbol{u}-\boldsymbol{u}_h)\|_{0,T}.
\end{align*}
Finally, estimate \eqref{uhast:errorestimate2} follows from the error estimate of $I_{T, k+1}^{\curl}$, \eqref{eq:errorestimate1} and \eqref{eq:curlsupererrorestimate}.
\end{proof}

\begin{table}[htp]
		\setlength{\abovecaptionskip}{0.cm}
		\setlength{\belowcaptionskip}{-0.cm}
	\centering
\caption{Errors for the post-processing for $k=1$.}\label{table:k1l02}
	\scalebox{0.91}{
	\renewcommand{\arraystretch}{1.25}		
\begin{tabular}[t]{ccccc}
    \toprule
    $h$ &$2^{-2}$ &$2^{-3}$ &$2^{-4}$ &$2^{-5}$ \\
    \midrule
    \# DoFs for $\ell=0$ & 2,332 & 17,240 & 132,400 & 1,037,408 \\
    \# DoFs for $\ell=1$ & 2,936 & 21,424 & 163,424 & 1,276,096 \\
      $\|\curl_{\mathcal T_h}(\boldsymbol{u}-\boldsymbol{u}_h^{\ast})\|$ &   
    9.25E+00 & 2.71E+00 & 7.18E-01 & 1.83E-01 \\
    order &$-$ & 1.77 & 1.92 & 1.98 \\
    $\|\grad_{\mathcal T_h}\curl_{\mathcal T_h}(\boldsymbol{u}-\boldsymbol{u}_h^{\ast})\|$ &
    1.52E+02 & 8.62E+01 & 4.48E+01 & 2.26E+01 \\
    order &$-$ & 0.81 & 0.95 & 0.99 \\
    \bottomrule
\end{tabular}
	}
\end{table}

Numerical results of the post-processing $\boldsymbol{u}_h^{\ast}$ for $k=1$ and $\ell=0,1$ are listed in Table~\ref{table:k1l02}. 
We can observe that $\|\curl_{\mathcal T_h}(\boldsymbol{u}-\boldsymbol{u}_h^{\ast})\|= \mathcal{O}(h^2)$ and $\|\grad_{\mathcal T_h}\curl_{\mathcal T_h}(\boldsymbol{u}-\boldsymbol{u}_h^{\ast})\|= \mathcal{O}(h)$ numerically, which are one order higher than $\|\curl({\boldsymbol{u}}-\boldsymbol{u}_{h})\|$ and $\|\grad_{\mathcal T_h}\curl({\boldsymbol{u}}-\boldsymbol{u}_{h})\|$ respectively.
Indeed, $\|\grad_{\mathcal T_h}\curl({\boldsymbol{u}}-\boldsymbol{u}_{h})\|=\|\boldsymbol{\sigma}-\boldsymbol{\sigma}_h\|$.

\section{Hybridization of distributional mixed finite element method}\label{sec:hybridization}
In this section we will hybridize the distributional mixed finite element method \eqref{distribumfem1}-\eqref{distribumfem2} following the framework in~\cite{ArnoldBrezzi1985}. We introduce a Lagrange multiplier for the tangential-normal continuity of $\boldsymbol{\sigma}$ which can be treat as an approximation of $(\boldsymbol{n}_F\times \curl\boldsymbol{u})|_{\mathcal F_h}$.  As $\bs \sigma$ is discontinuous, it can be eliminated element-wise and the size of the resulting linear system  is reduced, which is easier to solve than the saddle point system obtained by the mixed method \eqref{distribumfem1}-\eqref{distribumfem2}.  
With the hybridized method, we can also establish the equivalence of the mixed method \eqref{distribumfem1}-\eqref{distribumfem2} to a weak Galerkin method without stabilization and nonconforming finite element methods in~\cite{Huang2020,ZhengHuXu2011}.

To this end, introduce two finite element spaces
\begin{align*}
	{\Sigma}_{k-1,h}^{-1}:=&\{\boldsymbol{\tau}_h \in L^2(\Omega ; \mathbb{T}): \boldsymbol{\tau}_h|_T \in \mathbb{P}_{k-1}(T ; \mathbb{T}) \text{ for each } T \in \mathcal{T}_h\},\\
	\Lambda_{k-1,h}:=&\{\boldsymbol{\mu}_h \in L^2(\mathcal{F}_h;\mathbb{R}^{3}):\boldsymbol{\mu}_h|_F \in \mathbb{P}_{k-1}(F; \mathbb{R}^{3}) \text{ and }  \boldsymbol{\mu}_h\cdot\boldsymbol{n}|_F=0 \text{ for each }F \in \mathring{\mathcal{F}}_h, \\
         &\qquad\qquad\qquad\qquad\;\text{ and } \boldsymbol{\mu}_h=0\text{ on } \mathcal{F}_h \backslash \mathring{\mathcal{F}}_h\}.
\end{align*}
The space $\Lambda_h$ is introduced as the Lagrange multiplier to impose the tangential-normal continuity and is equipped with squared norm
\begin{equation*}
\left\|\boldsymbol{\mu}_h\right\|_{\alpha, h}^2:=\sum_{T \in \mathcal{T}_h} \sum_{F \in \mathcal{F}(T)} h_F^{-2 \alpha}\left\|\boldsymbol{\mu}_h\right\|_{F}^2, \quad \alpha=\pm 1 / 2.
\end{equation*}

\subsection{Hybridization}
The hybridization of the mixed finite element method \eqref{distribumfem1}-\eqref{distribumfem2} is to find
$(\boldsymbol{\sigma}_h, \boldsymbol{u}_h, \phi_h, \boldsymbol{\lambda}_h) \in {\Sigma}_{k-1,h}^{-1} \times \mathring{\mathbb{V}}^{\curl}_{(k,\ell), h} \times \mathring{\mathbb{V}}^{\grad}_{\ell+1, h} \times \Lambda_{k-1,h}$
such that
\begin{subequations}\label{eq:hy}
 \begin{align}
	\label{hybridishhjtype1}	
(\boldsymbol{\sigma}_{h}, \boldsymbol{\tau}_{h})+b_h(\boldsymbol{\tau}_{h},\psi_h; \boldsymbol{u}_h) \!+ c_h(\boldsymbol{\tau}_{h}, \boldsymbol{\lambda}_h) &=0, \quad\;\;\;\;\;\;\;\boldsymbol{\tau}_{h} \in {\Sigma}_{k-1,h}^{-1}, \psi_h\in\mathring{\mathbb{V}}^{\grad}_{\ell+1,h}, \\
\label{hybridishhjtype2}	
b_h(\boldsymbol{\sigma}_{h},\phi_h; \boldsymbol{v}_h) \!+ c_h(\boldsymbol{\sigma}_{h}, \boldsymbol{\mu}_h) &=\!\!-\langle\boldsymbol{f}, \boldsymbol{v}_h\rangle, \boldsymbol{v}_{h} \in \mathring{\mathbb{V}}^{\curl}_{(k,\ell),h}, \boldsymbol{\mu}_h\in \Lambda_{k-1,h}, 
\end{align}
\end{subequations}
where the bilinear form
$
c_h(\boldsymbol{\tau}_{h}, \boldsymbol{\lambda}_h):=-\sum_{T \in \mathcal{T}_{h}}\left(\boldsymbol{n}\times \boldsymbol{\tau}_h\boldsymbol{n}_F, \boldsymbol{\lambda}_h\right)_{\partial T}
$ is introduced to impose the tangential-normal continuity. 

\begin{lemma}
There holds the following inf-sup condition
\begin{align}\begin{split}\label{hybridizationinfsup}
	&	\|\boldsymbol{v}_h\|_{H((\grad\curl)_h)}+\left\|\boldsymbol{n}_F\times \curl\boldsymbol{v}_h-\boldsymbol{\mu}_h\right\|_{1 / 2, h}\\
	\lesssim& \sup _{\boldsymbol{\tau}_h \in\Sigma_{k-1,h}^{-1},\psi_h\in\mathring{\mathbb{V}}_{\ell+1, h}^{\grad}} \frac{b_h\left(\boldsymbol{\tau}_h ,\psi_h; \boldsymbol{v}_h\right) + c_h(\boldsymbol{\tau}_h,\boldsymbol{\mu}_h)}{\left\|\boldsymbol{\tau}_h\right\|+|\psi_h|_1}, \ \boldsymbol{v}_h \in \mathring{\mathbb{V}}_{(k,\ell),h}^{\curl}, \boldsymbol{\mu}_h \in \Lambda_{k-1,h}.
\end{split}\end{align}
\end{lemma}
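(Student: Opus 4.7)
The plan is to construct the test triple $(\bs\tau_h,\psi_h)$ as a sum of two pieces: a tangential-normal continuous piece living in $\Sigma_{k-1,h}^{\rm tn}\subset\Sigma_{k-1,h}^{-1}$ that activates the discrete inf-sup \eqref{eq:curldivdiscreteinfsup2}, plus a face-localized fully discontinuous piece in $\Sigma_{k-1,h}^{-1}$ that realizes the consistency jump $\bs n_F\times\curl\bs v_h-\bs\mu_h$. First I would derive a clean identity for $b_h+c_h$: starting from \eqref{eq:weakcurldiv}, I integrate $\sum_T(\div\bs\tau_h,\curl\bs v_h)_T$ by parts element-wise, split $\bs\tau_h\bs n$ into normal-normal and tangential-normal components, and use that $\bs n_F\cdot\curl\bs v_h=\rot_F\bs v_h$ is single-valued across interior faces (and vanishes on boundary faces for $\bs v_h\in H_0(\curl,\Omega)$) to cancel the $\bs n^\intercal\bs\tau_h\bs n$ contribution against the second sum of \eqref{eq:weakcurldiv}. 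Introducing $\epsilon_{T,F}:=\bs n_T\cdot\bs n_F\in\{\pm1\}$, this yields
\begin{multline*}
b_h(\bs\tau_h,\psi_h;\bs v_h)+c_h(\bs\tau_h,\bs\mu_h)=-\sum_{T\in\mathcal T_h}(\bs\tau_h,\grad\curl\bs v_h)_T+(\grad\psi_h,\bs v_h)\\
+\sum_{T\in\mathcal T_h}\sum_{F\in\mathcal F(T)}\epsilon_{T,F}(\bs n_F\times\bs\tau_h\bs n_F,\bs n_F\times\curl\bs v_h-\bs\mu_h)_F,
\end{multline*}
which exhibits $\bs n_F\times\curl\bs v_h-\bs\mu_h$ as the natural face pairing with the tangential-normal trace of $\bs\tau_h$.

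For the continuous piece, apply \eqref{eq:curldivdiscreteinfsup2} to produce $(\bs\tau_h^{(1)},\psi_h)\in\Sigma_{k-1,h}^{\rm tn}\times\mathring{\mathbb V}_{\ell+1,h}^{\grad}$ with $\|\bs\tau_h^{(1)}\|+|\psi_h|_1\lesssim\|\bs v_h\|_{H((\grad\curl)_h)}$ and $b_h(\bs\tau_h^{(1)},\psi_h;\bs v_h)\gtrsim\|\bs v_h\|_{H((\grad\curl)_h)}^2$. Because the tangential-normal trace of $\bs\tau_h^{(1)}$ is single-valued on interior faces and $\bs\mu_h$ vanishes on boundary faces, $c_h(\bs\tau_h^{(1)},\bs\mu_h)=0$, so this piece alone supplies the $\|\bs v_h\|_{H((\grad\curl)_h)}$ contribution.

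For the discontinuous piece, I would build $\bs\tau_h^{(2)}\in\Sigma_{k-1,h}^{-1}$ element-by-element: set all interior moments \eqref{eq:curdivfemdof2} to zero, and on each face $F\in\mathcal F(T)$ use the face DoFs \eqref{eq:curdivfemdof1} to prescribe
\begin{equation*}
\bs n_F\times\bs\tau_h^{(2)}|_T\bs n_F=\epsilon_{T,F}\,h_F^{-1}\bigl(\bs n_F\times\curl\bs v_h|_T-\bs\mu_h\bigr),
\end{equation*}
which is a polynomial of degree $k-1$ on $F$ and hence exactly representable. Substituting into the identity of the first step with $\psi_h=0$, the boundary sum collapses to $\|\bs n_F\times\curl\bs v_h-\bs\mu_h\|_{1/2,h}^2$, while a standard reference-element scaling argument (only face DoFs are active) yields $\|\bs\tau_h^{(2)}\|^2\lesssim\|\bs n_F\times\curl\bs v_h-\bs\mu_h\|_{1/2,h}^2$.

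Finally, I would test with $\bs\tau_h:=\bs\tau_h^{(1)}+\alpha\bs\tau_h^{(2)}$ and the $\psi_h$ from the second step for a small tunable $\alpha>0$. The only mixed contribution is the cross term $-\alpha\sum_T(\bs\tau_h^{(2)},\grad\curl\bs v_h)_T$; applying Cauchy-Schwarz with the $L^2$-bound on $\bs\tau_h^{(2)}$, the norm equivalence \eqref{eq:gradcurlhnormequiv}, and Young's inequality, this term is absorbed once $\alpha$ is chosen sufficiently small, producing $b_h(\bs\tau_h,\psi_h;\bs v_h)+c_h(\bs\tau_h,\bs\mu_h)\gtrsim\|\bs v_h\|_{H((\grad\curl)_h)}^2+\|\bs n_F\times\curl\bs v_h-\bs\mu_h\|_{1/2,h}^2$ together with $\|\bs\tau_h\|+|\psi_h|_1\lesssim\|\bs v_h\|_{H((\grad\curl)_h)}+\|\bs n_F\times\curl\bs v_h-\bs\mu_h\|_{1/2,h}$, which delivers \eqref{hybridizationinfsup}. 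The main obstacle is the orientation bookkeeping in the first step, since both $b_h$ (involving element normals $\bs n_T$) and $c_h$ (with the face normal $\bs n_F$ but summed over element boundaries) must be recast into a common face-oriented form before the consistency jump emerges cleanly; the stable face-lift in the construction of $\bs\tau_h^{(2)}$ is a secondary routine check resting on the unisolvence of the DoFs \eqref{eq:curldivdof} on $\mathbb P_{k-1}(T;\mathbb T)$.
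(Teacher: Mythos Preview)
Your argument is correct and follows the same overall strategy as the paper: a face-localized lift in $\Sigma_{k-1,h}^{-1}$ for the jump term, combined with the prior inf-sup \eqref{eq:curldivdiscreteinfsup2} for $\|\bs v_h\|_{H((\grad\curl)_h)}$. The execution differs in two tactical ways. First, instead of setting the interior moments of $\bs\tau_h^{(2)}$ to zero, the paper prescribes $(\bs\tau_h,\bs q)_T=-(\grad\curl\bs v_h,\bs q)_T$ for $\bs q\in\mathbb P_{k-2}(T;\mathbb T)$; since $\grad\curl\bs v_h|_T\in\mathbb P_{k-2}(T;\mathbb T)$, this makes $-\sum_T(\bs\tau_h,\grad\curl\bs v_h)_T=\|\grad_{\mathcal T_h}\curl\bs v_h\|^2$ exactly, so the identity collapses to $b_h(\bs\tau_h,0;\bs v_h)+c_h(\bs\tau_h,\bs\mu_h)=\|\grad_{\mathcal T_h}\curl\bs v_h\|^2+\|\bs n_F\times\curl\bs v_h-\bs\mu_h\|_{1/2,h}^2$ and no cross term, $\alpha$-tuning, or Young's inequality is needed. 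Second, rather than summing two test functions into one, the paper simply notes that the sup on the right of \eqref{hybridizationinfsup} dominates both the sup over $\Sigma_{k-1,h}^{-1}$ with $\psi_h=0$ (yielding \eqref{eq:discreteinfsuph1}) and the sup over $\Sigma_{k-1,h}^{\rm tn}\times\mathring{\mathbb V}_{\ell+1,h}^{\grad}$ (yielding \eqref{eq:curldivdiscreteinfsup2}, since $c_h$ vanishes there), and adds the two bounds. Your route is perfectly valid; the paper's is a bit shorter because the interior-DoF choice eliminates the cross term at the source.
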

\begin{proof}
Let $\boldsymbol{\tau}_h\in\Sigma_{k-1,h}^{-1}$ be determined as follows: for $T\in\mathcal T_h$, 
\begin{align*}
(\boldsymbol{n}\times\boldsymbol{\tau}_h\boldsymbol{n}_F)|_F&=\frac{1}{h_F}(\boldsymbol{n}_F\times\curl \boldsymbol{v}_h-\boldsymbol{\mu}_h),\qquad F\in\mathcal F(T),
\\
(\boldsymbol{\tau}_h, \boldsymbol{q})_T&=-(\grad\curl\boldsymbol{v}_h, \boldsymbol{q})_T, \qquad\quad\;\;\boldsymbol{q}\in \mathbb P_{k-2}(T;\mathbb T).
\end{align*}
Clearly we have
\begin{align*}
&	\|\boldsymbol{\tau}_h\|\lesssim \|\grad_{\mathcal T_h}\curl\boldsymbol{v}_h\|+\left\|\boldsymbol{n}_F\times \curl\boldsymbol{v}_h-\boldsymbol{\mu}_h\right\|_{1 / 2, h},\\
&	b_h\left(\boldsymbol{\tau}_h ,0; \boldsymbol{v}_h\right) + c_h(\boldsymbol{\tau}_h, \boldsymbol{\mu}_h)=\|\grad_{\mathcal T_h}\curl\boldsymbol{v}_h\|^2+\left\|\boldsymbol{n}_F\times \curl\boldsymbol{v}_h-\boldsymbol{\mu}_h\right\|_{1 / 2, h}^{2}.
\end{align*}
Then
\begin{align}
\label{eq:discreteinfsuph1}
&\quad \|\grad_{\mathcal T_h}\curl\boldsymbol{v}_h\|+\left\|\boldsymbol{n}_F\times \curl\boldsymbol{v}_h-\boldsymbol{\mu}_h\right\|_{1 / 2, h} \\
\notag
&\qquad\qquad\qquad\qquad\qquad\qquad\lesssim \sup _{\boldsymbol{\tau}_h \in \Sigma_{k-1,h}^{-1}} \frac{b_h\left(\boldsymbol{\tau}_h,0; \boldsymbol{v}_h\right) + c_h(\boldsymbol{\tau}_h,\boldsymbol{\mu}_h)}{\left\|\boldsymbol{\tau}_h\right\|},
\end{align}
which together with \eqref{eq:gradcurlhnormequiv} indicates
\begin{align*}
&\quad\; \| (\grad\curl)_h \bs v_h\|+\left\|\boldsymbol{n}_F\times \curl\boldsymbol{v}_h-\boldsymbol{\mu}_h\right\|_{1 / 2, h} \\
&\lesssim \sup _{\boldsymbol{\tau}_h \in \Sigma_h^{-1},\psi_h\in\mathring{\mathbb{V}}_h^{\grad}} \frac{b_h\left(\boldsymbol{\tau}_h ,\psi_h; \boldsymbol{v}_h\right) + c_h(\boldsymbol{\tau}_h,\boldsymbol{\mu}_h)}{\left\|\boldsymbol{\tau}_h\right\|+|\psi_h|_1}.
\end{align*}
Therefore the inf-sup condition \eqref{hybridizationinfsup} holds from \eqref{eq:curldivdiscreteinfsup2}.
\end{proof}


\begin{theorem}
The hybridized mixed finite element method \eqref{hybridishhjtype1}-\eqref{hybridishhjtype2} is well-posed, and the solution $(\boldsymbol{\sigma}_h, \boldsymbol{u}_h, \phi_h) \in {\Sigma}^{\rm tn}_h \times \mathring{\mathbb{V}}^{\curl}_{(k,\ell), h} \times \mathring{\mathbb{V}}^{\grad}_{\ell+1, h}$ coincides with the mixed finite element method \eqref{distribumfem1}-\eqref{distribumfem2}. 
\end{theorem}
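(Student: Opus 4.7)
The plan is to apply the Babuska-Brezzi theory to \eqref{hybridishhjtype1}-\eqref{hybridishhjtype2} by grouping the unknowns as a primal pair $(\boldsymbol{\sigma}_h, \phi_h) \in \Sigma_{k-1,h}^{-1} \times \mathring{\mathbb V}^{\grad}_{\ell+1,h}$ and a dual pair $(\boldsymbol{u}_h, \boldsymbol{\lambda}_h) \in \mathring{\mathbb V}^{\curl}_{(k,\ell),h} \times \Lambda_{k-1,h}$, with combined off-diagonal form $B_h((\boldsymbol{\tau}_h, \psi_h); (\boldsymbol{v}_h, \boldsymbol{\mu}_h)) := b_h(\boldsymbol{\tau}_h, \psi_h; \boldsymbol{v}_h) + c_h(\boldsymbol{\tau}_h, \boldsymbol{\mu}_h)$. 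The inf-sup condition for $B_h$ is precisely \eqref{hybridizationinfsup}, taking the dual norm as $\|\boldsymbol{v}_h\|_{H((\grad\curl)_h)} + \|\boldsymbol{n}_F \times \curl\boldsymbol{v}_h - \boldsymbol{\mu}_h\|_{1/2,h}$, so the only remaining ingredient for well-posedness is coercivity of $(\cdot,\cdot)$ on the kernel of $B_h$.

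To analyse this kernel, I would take any $(\boldsymbol{\tau}_h, \psi_h)$ in it and first test with $\boldsymbol{v}_h = 0$, obtaining $c_h(\boldsymbol{\tau}_h, \boldsymbol{\mu}_h) = 0$ for all $\boldsymbol{\mu}_h \in \Lambda_{k-1,h}$. Rearranging the sum over $\partial T$ into a sum over faces, this is equivalent to $\llbracket \boldsymbol{n} \times \boldsymbol{\tau}_h \boldsymbol{n} \rrbracket|_F = 0$ on every interior face $F$: the quantity $\boldsymbol{n} \times \boldsymbol{\tau}_h \boldsymbol{n}$ is automatically tangent to $F$ and lies in $\mathbb{P}_{k-1}(F;\mathbb{R}^2)$, matching $\Lambda_{k-1,h}|_F$ exactly. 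Hence $\boldsymbol{\tau}_h \in \Sigma^{\rm tn}_{k-1,h}$, and the remaining equation $b_h(\boldsymbol{\tau}_h, \psi_h; \boldsymbol{v}_h) = 0$ for all $\boldsymbol{v}_h$ reduces to the hypothesis of \eqref{eq:curldivcoercivitydiscrete}, which forces $\psi_h = 0$. Coercivity in the norm $\|\boldsymbol{\tau}_h\| + |\psi_h|_1$ is then immediate, and the Babuska-Brezzi theory delivers well-posedness.

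For the equivalence claim, the same jump identification is used on the solution itself: testing \eqref{hybridishhjtype2} with $(\boldsymbol{v}_h, \boldsymbol{\mu}_h) = (0, \boldsymbol{\mu}_h)$ yields $c_h(\boldsymbol{\sigma}_h, \boldsymbol{\mu}_h) = 0$ for all $\boldsymbol{\mu}_h$, so by the argument just given $\boldsymbol{\sigma}_h \in \Sigma^{\rm tn}_{k-1,h}$. Conversely, for any $\boldsymbol{\tau}_h \in \Sigma^{\rm tn}_{k-1,h}$ the coupling term $c_h(\boldsymbol{\tau}_h, \boldsymbol{\lambda}_h)$ vanishes, since the jump of $\boldsymbol{n} \times \boldsymbol{\tau}_h \boldsymbol{n}$ is zero across interior faces and $\boldsymbol{\lambda}_h = 0$ on boundary faces by the definition of $\Lambda_{k-1,h}$. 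Restricting \eqref{hybridishhjtype1} to $\boldsymbol{\tau}_h \in \Sigma^{\rm tn}_{k-1,h}$ therefore reproduces \eqref{distribumfem1}-\eqref{distribumfem2}, and uniqueness of the mixed problem identifies the two triples $(\boldsymbol{\sigma}_h, \boldsymbol{u}_h, \phi_h)$.

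The main technical obstacle is the face-wise bookkeeping that links $c_h(\boldsymbol{\tau}_h, \boldsymbol{\mu}_h) = 0$ to tangential-normal continuity: one must check that the tangential constraint $\boldsymbol{\mu}_h \cdot \boldsymbol{n} = 0$ and the polynomial degree $k-1$ are the right ones to make the pairing between $\llbracket \boldsymbol{n} \times \boldsymbol{\tau}_h \boldsymbol{n} \rrbracket$ and $\Lambda_{k-1,h}$ non-degenerate on interior faces, while the vanishing of $\boldsymbol{\lambda}_h$ on the boundary keeps boundary contributions from interfering. Once this correspondence is verified, the rest is a routine application of the Babuska-Brezzi theory combined with the coercivity and inf-sup results already established.
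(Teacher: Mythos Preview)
Your proposal is correct and follows essentially the same approach as the paper's proof: identify the kernel of the combined form $B_h$ via the vanishing of $c_h$ (which forces $\boldsymbol{\tau}_h\in\Sigma^{\rm tn}_h$), invoke the discrete coercivity \eqref{eq:curldivcoercivitydiscrete}, and apply Babu\v{s}ka--Brezzi together with the inf-sup condition \eqref{hybridizationinfsup}; the equivalence argument via testing \eqref{hybridishhjtype2} with $\boldsymbol{v}_h=0$ is likewise the paper's route. Your write-up is in fact a bit more explicit than the paper's in spelling out why the pairing between $\llbracket \boldsymbol{n}\times\boldsymbol{\tau}_h\boldsymbol{n}\rrbracket$ and $\Lambda_{k-1,h}$ is non-degenerate and why $c_h(\boldsymbol{\tau}_h,\boldsymbol{\lambda}_h)$ vanishes for $\boldsymbol{\tau}_h\in\Sigma^{\rm tn}_h$, but the substance is the same.
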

\begin{proof}
For $\boldsymbol{\tau}_h\in\Sigma_h^{-1}$ and $\psi_h\in\mathring{\mathbb V}_h^{\grad}$ satisfying 
\begin{equation*}
b_h(\boldsymbol{\tau}_h ,\psi_h; \boldsymbol{v}_h) + c_h(\boldsymbol{\tau}_h,\boldsymbol{\mu}_h)=0,\quad \boldsymbol{v}_h \in \mathring{\mathbb V}_h^{\curl}, \boldsymbol{\mu}_h \in \Lambda_h,
\end{equation*}
we have $\boldsymbol{\tau}_h\in{\Sigma}^{\rm tn}_h$, and $b_h(\boldsymbol{\tau}_h,\psi_h;\boldsymbol{v}_h)=0$ for $\boldsymbol{v}_h \in \mathring{\mathbb V}_h^{\curl}.$
By \eqref{eq:curldivcoercivitydiscrete}, we obtain the discrete coercivity
$\|\boldsymbol{\tau}_h\|^2+|\psi_h|_1^2 \lesssim \|\boldsymbol{\tau}_h\|^2.$
Then we get the well-posedness of the hybridized mixed finite element method \eqref{hybridishhjtype1}-\eqref{hybridishhjtype2} by applying the Babu{\v{s}}ka-Brezzi theory~\cite{BoffiBrezziFortin2013} with the discrete inf-sup condition \eqref{hybridizationinfsup}.

By \eqref{hybridishhjtype2} with $\boldsymbol{v}_{h}=0$, $\boldsymbol{\sigma}_h \in {\Sigma}^{\rm tn}_h$, thus $(\boldsymbol{\sigma}_h, \boldsymbol{u}_h, \phi_h)$ satisfies \eqref{distribumfem1}-\eqref{distribumfem2}. 
\end{proof}

\begin{theorem}
Let $(\boldsymbol{\sigma},0,\boldsymbol{u})$ and $(\boldsymbol{\sigma}_h,0,\boldsymbol{u}_h,\bs \lambda_h)$ be the solution of the mixed formulation~\eqref{quadcurlmixed1}-\eqref{quadcurlmixed2} and the mixed finite element method \eqref{eq:hy} respectively. Assume $\boldsymbol{\sigma}\in H^k(\Omega;\mathbb T)$ and $\boldsymbol{u}, \curl\boldsymbol{u}\in H^{k}(\Omega;\mathbb R^3)$. Then
\begin{equation}\label{hybrierror1}
\left\|\boldsymbol{n}_F\times \curl\boldsymbol{u}_h-\boldsymbol{\lambda}_h\right\|_{-1 / 2, h}\lesssim h^{k}(h|\boldsymbol{\sigma}|_{k}+h|\boldsymbol{u}|_{k}+|\curl\boldsymbol{u}|_{k}).
\end{equation}
\end{theorem}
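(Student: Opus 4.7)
The plan is to establish the estimate via a local argument that determines $\bs\lambda_h$ face-by-face, followed by the already established error estimate~\eqref{eq:errorestimate1} and standard interpolation bounds. The key observation is that testing~\eqref{hybridishhjtype1} with $\bs\tau_h\in\Sigma_{k-1,h}^{-1}$ supported in a single element $T$ yields a local identity relating $\bs\lambda_h|_F$ to the one-sided trace $\bs n_F\times\curl\bs u_h|_T$, with the defect controlled by the element residual $\bs\sigma_h-\grad\curl\bs u_h$ on $T$.

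For fixed $T\in\mathcal T_h$, $F\in\mathcal F(T)$, and arbitrary $\bs\phi\in\mathbb P_{k-1}(F;\mathbb R^3)_{\rm tan}$, I would invoke the unisolvence of the DoFs~\eqref{eq:curldivdof} to produce a unique $\bs\tau_h\in\mathbb P_{k-1}(T;\mathbb T)$, extended by zero to the rest of the mesh to lie in $\Sigma_{k-1,h}^{-1}$, whose tangential-normal face trace on $F$ agrees with $\bs\phi$ (up to the orientation sign between $\bs n_T$ and $\bs n_F$), whose tangential-normal trace vanishes on the remaining faces of $T$, and whose interior moments vanish; a scaling argument on the reference element yields $\|\bs\tau_h\|_T\lesssim h_F^{1/2}\|\bs\phi\|_F$. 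Plugging $(\bs\tau_h,0)$ into~\eqref{hybridishhjtype1} and using the identity
\[
b_h(\bs\tau_h,0;\bs u_h)=-(\bs\tau_h,\grad\curl\bs u_h)_T+(\bs n\times\bs\tau_h\bs n,\bs n\times\curl\bs u_h|_T)_F,
\]
which follows by element-wise integration by parts (the normal-normal jump contribution in the defining formula of $b_h$ cancels against a boundary integral, since $\bs u_h\in H(\curl,\Omega)$ makes $\bs n_F\cdot\curl\bs u_h$ single valued), together with the fact that only the face $F$ contributes to $c_h(\bs\tau_h,\bs\lambda_h)$ by the support condition on $\bs\tau_h$, one arrives after sign bookkeeping at the local identity
\[
(\bs\phi,\bs\lambda_h-\bs n_F\times\curl\bs u_h|_T)_F=\pm(\bs\sigma_h-\grad\curl\bs u_h,\bs\tau_h)_T.
\]
Since $\curl\bs u_h|_T\in\mathbb P_{k-1}(T;\mathbb R^3)$ by the structure of $\mathcal N_{k,\ell}^c$ and $\bs\lambda_h|_F\in\mathbb P_{k-1}(F;\mathbb R^3)_{\rm tan}$ by the definition of $\Lambda_{k-1,h}$, the difference $\bs\lambda_h-\bs n_F\times\curl\bs u_h|_T$ is itself an admissible choice of $\bs\phi$, whence Cauchy--Schwarz produces
\[
\|\bs\lambda_h-\bs n_F\times\curl\bs u_h|_T\|_F\lesssim h_F^{1/2}\,\|\bs\sigma_h-\grad\curl\bs u_h\|_T.
\]

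Squaring, multiplying by $h_F$, and summing over all $T\in\mathcal T_h$ and $F\in\mathcal F(T)$ then yields
\[
\|\bs n_F\times\curl\bs u_h-\bs\lambda_h\|_{-1/2,h}\lesssim h\,\|\bs\sigma_h-\grad_{\mathcal T_h}\curl\bs u_h\|.
\]
To bound the right-hand side I would use $\bs\sigma=\grad\curl\bs u$ and the triangle inequality
\[
\|\bs\sigma_h-\grad_{\mathcal T_h}\curl\bs u_h\|\le\|\bs\sigma-\bs\sigma_h\|+\|\grad_{\mathcal T_h}\curl(\bs u-I_h^{\curl}\bs u)\|+\|\grad_{\mathcal T_h}\curl(I_h^{\curl}\bs u-\bs u_h)\|.
\]
The first and third terms are $\lesssim h^k(|\bs\sigma|_k+|\bs u|_k)$ by~\eqref{eq:errorestimate1} combined with the norm equivalence~\eqref{eq:gradcurlhnormequiv}; the middle term is $\lesssim h^{k-1}|\curl\bs u|_k$ via the commutativity $\curl\,I_h^{\curl}=I_h^{\div}\curl$ and standard broken $H^1$ interpolation estimates for the $H(\div)$-conforming interpolant of $\curl\bs u$. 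Multiplying the resulting bound by the leading $h$ produces precisely $h^{k+1}(|\bs\sigma|_k+|\bs u|_k)+h^k|\curl\bs u|_k$, matching the target.

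The main obstacle is verifying the local identity: one must assemble $\bs\tau_h$ with a single nonzero tangential-normal face trace, select the alternative form of $b_h$ in which the normal-normal jump correction cancels against an interior integration by parts, and carefully track the orientation signs relating the element normal $\bs n_T$ to the fixed face normal $\bs n_F$ so that the signs on the $b_h$ and $c_h$ sides are consistent. Once this is established, the remaining summation and triangle-inequality arguments are routine.
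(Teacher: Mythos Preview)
Your argument is correct. The local test-function construction you propose is in fact the same building block that underlies the paper's inf-sup condition~\eqref{eq:discreteinfsuph1}, so the cancellation of the normal-normal face terms and the scaling $\|\bs\tau_h\|_T\lesssim h_F^{1/2}\|\bs\phi\|_F$ go through as you describe, yielding
\[
\|\bs n_F\times\curl\bs u_h-\bs\lambda_h\|_{-1/2,h}\lesssim h\,\|\bs\sigma_h-\grad_{\mathcal T_h}\curl\bs u_h\|,
\]
after which the triangle inequality and~\eqref{eq:errorestimate1},~\eqref{eq:gradcurlhnormequiv} finish the job.

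The paper takes a different route: rather than a face-by-face test, it invokes the global inf-sup bound from the proof of~\eqref{hybridizationinfsup} with the pair $\bigl(I_h^{\curl}\bs u-\bs u_h,\,Q_{\mathcal F_h}^{k-1}(\bs n_F\times\curl\bs u)-\bs\lambda_h\bigr)$ in place of $(\bs v_h,\bs\mu_h)$. Choosing the auxiliary interpolants this way makes the numerator collapse exactly to $(\bs\sigma_h-\bs\sigma,\bs\tau_h)$ via~\eqref{eq:curdivhIhcurl} and~\eqref{hybridishhjtype1}, giving a $1/2,h$-norm bound by $\|\bs\sigma-\bs\sigma_h\|$ directly; the conversion to the $-1/2,h$ norm and the $|\curl\bs u|_k$ contribution then come from interpolation errors of $I_h^{\curl}$ and $Q_{\mathcal F_h}^{k-1}$. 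Your approach avoids introducing these interpolants up front and is more self-contained, at the price of having to bound the discrete residual $\|\bs\sigma_h-\grad_{\mathcal T_h}\curl\bs u_h\|$ via an additional triangle inequality. The paper's approach reuses machinery already established and isolates $\|\bs\sigma-\bs\sigma_h\|$ cleanly; yours is a bit more elementary. Both land on the same estimate with the same dependence on the regularity exponents.
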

\begin{proof}
By the proof of the discrete inf-sup condition \eqref{hybridizationinfsup},
\begin{align*}
	&	\left\|\boldsymbol{n}_F\times \curl(I_h^{\curl}\boldsymbol{u}-\boldsymbol{u}_h)-(Q_{\mathcal F_h}^{k-1}(\boldsymbol{n}_F\times \curl\boldsymbol{u})-\boldsymbol{\lambda}_h)\right\|_{1 / 2, h}\\
	\lesssim& \sup _{\boldsymbol{\tau}_h \in\Sigma_{k-1,h}^{-1}} \frac{b_h(\boldsymbol{\tau}_h,0; I_h^{\curl}\boldsymbol{u}-\boldsymbol{u}_h) + c_h(\boldsymbol{\tau}_h,Q_{\mathcal F_h}^{k-1}(\boldsymbol{n}_F\times \curl\boldsymbol{u})-\boldsymbol{\lambda}_h)}{\left\|\boldsymbol{\tau}_h\right\|}.
\end{align*}
Thanks to \eqref{eq:curdivhIhcurl} and \eqref{hybridishhjtype1}, we have
\begin{equation*}
b_h(\boldsymbol{\tau}_h,0; I_h^{\curl}\boldsymbol{u}-\boldsymbol{u}_h) + c_h(\boldsymbol{\tau}_h,Q_{\mathcal F_h}^{k-1}(\boldsymbol{n}_F\times \curl\boldsymbol{u})-\boldsymbol{\lambda}_h)=(\boldsymbol{\sigma}_h-\boldsymbol{\sigma}, \boldsymbol{\tau}_h).
\end{equation*}
Hence
\begin{equation*}
\left\|\boldsymbol{n}_F\times \curl(I_h^{\curl}\boldsymbol{u}-\boldsymbol{u}_h)-(Q_{\mathcal F_h}^{k-1}(\boldsymbol{n}_F\times \curl\boldsymbol{u})-\boldsymbol{\lambda}_h)\right\|_{1 / 2, h}\lesssim \|\boldsymbol{\sigma}-\boldsymbol{\sigma}_h\|.
\end{equation*}
Therefore we can derive estimate \eqref{hybrierror1} from \eqref{eq:errorestimate1} and the error estimate of $I_h^{\curl}$. 
\end{proof}

\subsection{Connection to other methods}
The pair $(\boldsymbol{v}_h, \boldsymbol{\mu}_h)$ can be understood as a weak function. Define $(\grad\curl)_w: \mathring{\mathbb{V}}_h^{\curl} \times\Lambda_h \rightarrow {\Sigma}_h^{-1}$ by
\begin{equation*}
\left((\grad\curl)_w(\boldsymbol{v}_h, \boldsymbol{\mu}_h), \boldsymbol{\tau}_h\right)=-b_h\left(\boldsymbol{\tau}_h ,0; \boldsymbol{v}_h\right) - c_h(\boldsymbol{\tau}_h, \boldsymbol{\mu}_h), \quad \boldsymbol{\tau}_h \in {\Sigma}_{k-1,h}^{-1}.
\end{equation*}
By \eqref{hybridishhjtype1}, we can eliminate $\bs \sigma_h$ element-wise and write $\boldsymbol{\sigma}_h=(\grad\curl)_w(\boldsymbol{u}_h, \boldsymbol{\lambda}_h)$.
Then the hybridized mixed finite element method \eqref{hybridishhjtype1}-\eqref{hybridishhjtype2} can be recast as: find $(\boldsymbol{u}_h, \lambda_h, \phi_h) \in \mathring{\mathbb{V}}_{(k,\ell),h}^{\curl} \times\Lambda_{k-1,h}\times\mathring{\mathbb{V}}_{\ell +1, h}^{\grad}$ such that
\begin{align*}
((\grad\curl)_w(\boldsymbol{u}_h, \boldsymbol{\lambda}_h), (\grad\curl)_w(\boldsymbol{v}_h, \boldsymbol{\mu}_h))- (\boldsymbol{v}_h,\grad\phi_h) &=\langle\boldsymbol{f}, \boldsymbol{v}_h\rangle, \\
(\boldsymbol{u}_h,\grad\psi_h) &=0, 
\end{align*}
for all $(\boldsymbol{v}_h, \mu_h, \psi_h) \in \mathring{\mathbb{V}}_{(k,\ell),h}^{\curl} \times\Lambda_{k-1,h}\times\mathring{\mathbb{V}}_{\ell +1, h}^{\grad}$. That is we obtain a stabilization free weak Galerkin method for solving the quad-curl problem. The stabilization free is due to the inf-sup condition \eqref{eq:discreteinfsuph1}.
Indeed, the inf-sup condition \eqref{eq:discreteinfsuph1} is equivalent to
\begin{equation*}
\|\grad_{\mathcal T_h}\curl\boldsymbol{v}_h\|+\left\|\boldsymbol{n}_F\times \curl\boldsymbol{v}_h-\boldsymbol{\mu}_h\right\|_{1 / 2, h}\lesssim \|(\grad\curl)_w(\boldsymbol{v}_h, \boldsymbol{\mu}_h)\|
\end{equation*}
for $\boldsymbol{v}_h \in \mathring{\mathbb{V}}_{(k,\ell),h}^{\curl}$ and $\boldsymbol{\mu}_h \in \Lambda_{k-1,h}$.
This means $\|(\grad\curl)_w(\boldsymbol{v}_h, \boldsymbol{\mu}_h)\|$ is a norm on space $K_h^c\times \Lambda_{k-1,h}$.

For the hybridized mixed finite element method \eqref{hybridishhjtype1}-\eqref{hybridishhjtype2} of the lowest order $k=1$ and $\ell=0,1$, it is also related to a nonconforming finite element method. 

We first recall the $H(\grad\curl)$ nonconforming finite elements constructed in~\cite{Huang2020,ZhengHuXu2011}. The space of shape functions is 
$\grad\mathbb P_{\ell+1}(T) \oplus (\boldsymbol{x}-\boldsymbol{x}_T) \times \mathbb{P}_{1}\left(T ; \mathbb{R}^{3}\right)$ for $\ell=0,1$. The degrees of freedom are given by
\begin{subequations}
\begin{align}
\label{Hgradcurldof1}	\int_{e} \boldsymbol{v} \cdot \boldsymbol{t}\, q \dd s, & \;\quad q \in \mathbb{P}_{\ell}(e) \text{ on each } e \in \mathcal{E}(T),\\
\label{Hgradcurldof2}	\int_{F}(\curl \boldsymbol{v}) \times \boldsymbol{n}\dd S & \quad  \text{ on each } F \in \mathcal{F}(T).
\end{align}
\end{subequations}
Define the global $H(\grad \curl)$-nonconforming
element space
\begin{align*}
	W_h:=&\{\boldsymbol{v}_h \in {L}^2(\Omega ; \mathbb{R}^3):\,\boldsymbol{v}_h|_T \in \grad\mathbb P_{\ell+1}(T) \oplus (\boldsymbol{x}-\boldsymbol{x}_T) \times \mathbb{P}_{1}\left(T ; \mathbb{R}^{3}\right),\ \forall~T \in \mathcal{T}_h,\\
	&\text{ all the DoFs \eqref{Hgradcurldof1}-\eqref{Hgradcurldof2} are single-valued, and vanish on boundary} \}.
\end{align*}
The interpolation operator $I_h^{\curl}$ can be extended to $W_h$, which is well-defined. We have
$ I_h^{\curl}W_h=\mathring{\mathbb{V}}_h^{\curl}$, and \eqref{eq:curdivhIhcurl} still holds for $\bs v\in W_h$.

An $H(\grad\curl)$-nonconforming finite element method for quad-curl problem \eqref{bigradcurlproblem} is to find $\boldsymbol{w}_h \in W_h$ and $\phi_h \in\mathring{\mathbb{V}}_h^{\grad}$
such that
\begin{subequations}
 \begin{align}
\label{Hgradcurlvariable1}
(\grad_{\mathcal T_h}\curl_{\mathcal T_h}\boldsymbol{w}_h, \grad_{\mathcal T_h}\curl_{\mathcal T_h}\boldsymbol{v}_h)- (I_h^{\curl}\boldsymbol{v}_h,\grad\phi_h) &=\langle\boldsymbol{f}, I_h^{\curl}\boldsymbol{v}_h\rangle, \\
\label{Hgradcurlvariable2}
(I_h^{\curl}\boldsymbol{w}_h,\grad\psi_h) &=0,
\end{align}
\end{subequations}
for all $\boldsymbol{v}_h \in W_h$ and $\psi_h \in\mathring{\mathbb{V}}_h^{\grad}$.
Nonconforming finite element method \eqref{Hgradcurlvariable1}-\eqref{Hgradcurlvariable2} is a modification of those in~\cite{Huang2020,ZhengHuXu2011} by introducing interpolation operator $I_h^{\curl}$. In other words, we identify the complex that accommodates the non-conforming finite elements constructed in~\cite{Huang2020,ZhengHuXu2011} and generalize these elements to arbitrary orders.

\begin{lemma}
For $\boldsymbol{v}_h\in W_h$ satisfying $(I_h^{\curl}\boldsymbol{v}_h,\grad q_h)=0$ for all $q_h\in\mathring{\mathbb{V}}_h^{\grad}$, we have the discrete Poincar\'e inequality
\begin{equation}\label{eq:discretePoincareWh}	
\|\boldsymbol{v}_h\|+\|\curl_{\mathcal T_h}\boldsymbol{v}_h\|\lesssim \|\grad_{\mathcal T_h}\curl_{\mathcal T_h}\boldsymbol{v}_h\|.
\end{equation}
\end{lemma}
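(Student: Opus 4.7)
The plan is to decompose $\bs v_h = \bs v_I + \bs\delta$ with $\bs v_I := I_h^{\curl}\bs v_h \in \mathring{\mathbb V}_{(k,\ell),h}^{\curl}$ and $\bs\delta := \bs v_h - \bs v_I$, apply the already-established Poincar\'e inequality \eqref{eq:gradcurlhpoincare} to $\bs v_I$, and control $\bs\delta$ by a local scaling argument. The orthogonality hypothesis says precisely $(\bs v_I,\grad q_h) = -(\div_h\bs v_I,q_h) = 0$ for all $q_h$, so $\bs v_I \in K_h^c$. Hence \eqref{eq:gradcurlhpoincare} yields
\begin{equation*}
\|\bs v_I\|_{H(\curl)} \lesssim \|(\grad\curl)_h\bs v_I\|.
\end{equation*}

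The central ingredient is the bound $\|(\grad\curl)_h\bs v_I\| \lesssim \|\grad_{\mathcal T_h}\curl_{\mathcal T_h}\bs v_h\|$. For arbitrary $\bs\tau_h\in\Sigma_{k-1,h}^{\rm tn}$, the duality in \eqref{eq:curldivhcurldivw} together with the extension of \eqref{eq:curdivhIhcurl} to $W_h$ yields
\begin{equation*}
((\grad\curl)_h\bs v_I,\bs\tau_h) = -((\curl\div)_h\bs\tau_h,\bs v_I) = -\langle(\curl\div)_w\bs\tau_h,\bs v_h\rangle.
\end{equation*}
The plan is then to perform element-wise integration by parts on the right-hand side. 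Since $\bs\tau_h$ is piecewise constant ($k=1$), $\div\bs\tau_h = 0$, so the result reduces to $\sum_T(\bs\tau_h,\grad\curl\bs v_h)_T$ plus face jump terms in which the face-constant quantity $\bs n\times\bs\tau_h\bs n$ (continuous across $F$ by $\Sigma^{\rm tn}$) is paired with jumps of $\curl\bs v_h$. The $W_h$ face DoF \eqref{Hgradcurldof2} ensures single-valuedness of $\int_F(\curl\bs v_h)\times\bs n\,dS$, and Stokes' identity $\int_F\bs n_F\cdot\curl\bs v_h = \int_{\partial F}\bs v_h\cdot\bs t$ combined with single-valuedness of the edge zeroth moments of $\bs v_h$ gives single-valuedness of $\int_F\bs n_F\cdot\curl\bs v_h$. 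These two zero-mean properties, coupled with a trace inequality and a scaled face Poincar\'e estimate, bound the face terms by $\|\bs\tau_h\|\,\|\grad_{\mathcal T_h}\curl_{\mathcal T_h}\bs v_h\|$.

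For the nonconforming remainder $\bs\delta$, I observe that $\mathring{\mathbb V}_{(1,\ell),h}^{\curl}$ consists of linear vector fields whose curl is constant on each element, so $\grad\curl\bs v_I|_T = 0$ and $\grad\curl\bs\delta|_T = \grad\curl\bs v_h|_T$. On each $T$, $\bs\delta|_T$ lies in $\grad\mathbb P_{\ell+1}(T)\oplus(\bs x-\bs x_T)\times\mathbb P_1(T;\mathbb R^3)$ with vanishing edge moments of degree $\leq\ell$. On the reference element a direct computation on the explicit basis shows that the only element of the shape space with constant curl and vanishing edge moments of degree $\leq\ell$ is zero; a finite-dimensional equivalence plus scaling then gives
\begin{equation*}
\|\bs\delta\|_T + \|\curl\bs\delta\|_T \lesssim h_T\,\|\grad\curl\bs\delta\|_T = h_T\,\|\grad\curl\bs v_h\|_T.
\end{equation*}
Summing over $T$ and combining with the Poincar\'e bound on $\bs v_I$ via the triangle inequality closes the argument.

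The hardest step is the middle one: controlling $(\grad\curl)_h\bs v_I$---an object tied to the conforming Nédélec space---by the broken grad--curl of the nonconforming $\bs v_h$. The obstacle is that element-wise integration by parts produces face jump terms whose cancellation hinges delicately on the two pieces of inter-element continuity built into $W_h$ (the face DoF on $(\curl\bs v_h)\times\bs n$ and the edge moments of $\bs v_h$) together with the face-constant structure of $\bs n\times\bs\tau_h\bs n$ available only for $k=1$. Once those average-zero conditions are exploited against a scaled trace inequality, the remainder estimate and final summation are routine.
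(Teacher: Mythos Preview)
Your proposal is correct but follows a genuinely different route from the paper.  The paper's argument never touches $(\grad\curl)_h$ or duality with $\Sigma^{\rm tn}$; instead it observes directly that $\bs w := \curl_{\mathcal T_h}\bs v_h$ is a piecewise linear vector field whose face averages are single-valued (exactly your two observations: DoF~\eqref{Hgradcurldof2} for the tangential part, Stokes on the edge moments for the normal part) and vanish on boundary faces.  Thus $\bs w$ sits in the vector Crouzeix--Raviart space, and the classical CR Poincar\'e inequality gives $\|\curl_{\mathcal T_h}\bs v_h\|\lesssim\|\grad_{\mathcal T_h}\curl_{\mathcal T_h}\bs v_h\|$ in one stroke.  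The bound on $\|\bs v_h\|$ then comes from the triangle inequality, the cited estimate $\|\bs v_h-I_h^{\curl}\bs v_h\|\lesssim h\|\curl_{\mathcal T_h}\bs v_h\|$, the first-order Poincar\'e~\eqref{eq:curlhpoincare} on $\bs v_I\in K_h^c$, and the commuting property $\curl I_h^{\curl}\bs v_h = I_h^{\div}\curl_{\mathcal T_h}\bs v_h$.

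Your approach trades the CR observation for the second-order Poincar\'e~\eqref{eq:gradcurlhpoincare} plus a duality transfer; it is more self-contained (your scaling argument for $\bs\delta$ replaces the external citation) but leans on heavier machinery developed earlier in the paper.  One simplification you missed: in your ``hardest step,'' since $\bs n\times\bs\tau_h\bs n$ is face-constant for $k=1$ and the jump $\llbracket\bs n\times\curl\bs v_h\rrbracket$ has zero face mean, those face terms vanish \emph{exactly}---no trace inequality or face Poincar\'e is needed.  This makes your step 3 cleaner than you anticipated, and in fact reduces it to the single inequality $((\grad\curl)_h\bs v_I,\bs\tau_h)=\sum_T(\bs\tau_h,\grad\curl\bs v_h)_T\le\|\bs\tau_h\|\,\|\grad_{\mathcal T_h}\curl_{\mathcal T_h}\bs v_h\|$.
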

\begin{proof}
By (4.12) in~\cite{Huang2020}, it follows
\begin{equation*}
\|\boldsymbol{v}_h-I_h^{\curl}\boldsymbol{v}_h\|\lesssim h\|\curl_{\mathcal T_h}\boldsymbol{v}_h\|.
\end{equation*}
Thanks to the discrete Poincar\'e inequality for space $\mathring{\mathbb{V}}_h^{\curl}$, we have
\begin{equation*}
\|I_h^{\curl}\boldsymbol{v}_h\|\lesssim \|\curl(I_h^{\curl}\boldsymbol{v}_h)\|.
\end{equation*}
Combining the last two inequalities gives
\begin{equation*}
\|\boldsymbol{v}_h\|\leq \|\boldsymbol{v}_h-I_h^{\curl}\boldsymbol{v}_h\|+ \|I_h^{\curl}\boldsymbol{v}_h\|\lesssim h\|\curl_{\mathcal T_h}\boldsymbol{v}_h\|+\|\curl(I_h^{\curl}\boldsymbol{v}_h)\|.
\end{equation*}
By the commutative property of $I_h^{\curl}$, it holds that 
\begin{equation*}
\|\curl(I_h^{\curl}\boldsymbol{v}_h)\|\lesssim \|\grad_{\mathcal T_h}\curl_{\mathcal T_h}\boldsymbol{v}_h\|.
\end{equation*}
Hence
\begin{equation*}
\|\boldsymbol{v}_h\|\lesssim \|\curl_{\mathcal T_h}\boldsymbol{v}_h\|+\|\grad_{\mathcal T_h}\curl_{\mathcal T_h}\boldsymbol{v}_h\|.
\end{equation*}
Finally apply the discrete Poincar\'e inequality for $H^1$-nonconforming linear element to end the proof.
\end{proof}


Using the discrete Poincar\'e inequality \eqref{eq:discretePoincareWh}, we have the well-posedness.
\begin{lemma}
Nonconforming finite element method \eqref{Hgradcurlvariable1}-\eqref{Hgradcurlvariable2} is well-posed.
\end{lemma}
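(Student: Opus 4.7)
The plan is to verify well-posedness by the Babuška–Brezzi theory applied to the saddle-point system \eqref{Hgradcurlvariable1}–\eqref{Hgradcurlvariable2}. Equip $W_h$ with the mesh-dependent norm
\begin{equation*}
\|\boldsymbol v_h\|_{W_h}^2 := \|\boldsymbol v_h\|^2 + \|\curl_{\mathcal T_h}\boldsymbol v_h\|^2 + \|\grad_{\mathcal T_h}\curl_{\mathcal T_h}\boldsymbol v_h\|^2,
\end{equation*}
and $\mathring{\mathbb V}_{\ell+1,h}^{\grad}$ with $|\cdot|_1$. Denote the two bilinear forms by $a(\boldsymbol w_h,\boldsymbol v_h):=(\grad_{\mathcal T_h}\curl_{\mathcal T_h}\boldsymbol w_h,\grad_{\mathcal T_h}\curl_{\mathcal T_h}\boldsymbol v_h)$ and $b(\boldsymbol v_h,\psi_h):=-(I_h^{\curl}\boldsymbol v_h,\grad\psi_h)$, and let $Z_h := \{\boldsymbol v_h\in W_h : b(\boldsymbol v_h,\psi_h)=0 \text{ for all } \psi_h\in\mathring{\mathbb V}_{\ell+1,h}^{\grad}\}$.

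Continuity of $a$ is immediate from Cauchy–Schwarz. Continuity of $b$ needs $\|I_h^{\curl}\boldsymbol v_h\|\lesssim\|\boldsymbol v_h\|_{W_h}$, which follows by writing $\|I_h^{\curl}\boldsymbol v_h\|\le\|\boldsymbol v_h\|+\|\boldsymbol v_h-I_h^{\curl}\boldsymbol v_h\|$ and invoking the estimate $\|\boldsymbol v_h-I_h^{\curl}\boldsymbol v_h\|\lesssim h\|\curl_{\mathcal T_h}\boldsymbol v_h\|$ from \cite{Huang2020} (which was already used in the proof of Lemma~\ref{eq:discretePoincareWh} above). For coercivity on the kernel, any $\boldsymbol v_h\in Z_h$ satisfies the hypothesis of \eqref{eq:discretePoincareWh}, yielding
\begin{equation*}
a(\boldsymbol v_h,\boldsymbol v_h) = \|\grad_{\mathcal T_h}\curl_{\mathcal T_h}\boldsymbol v_h\|^2 \gtrsim \|\boldsymbol v_h\|_{W_h}^2.
\end{equation*}

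For the inf-sup condition, given $\phi_h\in\mathring{\mathbb V}_{\ell+1,h}^{\grad}$, the natural choice is to test with $\boldsymbol v_h=\grad\phi_h$. First I would verify $\grad\phi_h\in W_h$: locally $\grad\phi_h|_T\in\grad\mathbb P_{\ell+1}(T)$ lies in the shape function space; the edge DoFs \eqref{Hgradcurldof1} are single-valued by continuity of $\phi_h$; the face DoFs \eqref{Hgradcurldof2} vanish trivially since $\curl\grad\phi_h=0$ on each element; and both DoFs vanish on $\partial\Omega$ because $\phi_h\in H_0^1(\Omega)$. By the de Rham property $\grad\mathring{\mathbb V}_{\ell+1,h}^{\grad}\subseteq\mathring{\mathbb V}_{(k,\ell),h}^{\curl}$, so $I_h^{\curl}(\grad\phi_h)=\grad\phi_h$. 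Therefore $b(\grad\phi_h,\phi_h)=-|\phi_h|_1^2$ and $\|\grad\phi_h\|_{W_h}=|\phi_h|_1$ (the curl terms vanish), which gives the inf-sup condition with constant one.

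The main subtlety, and really the only point requiring actual thought, is verifying that $\grad\phi_h$ belongs to $W_h$ in the global sense (DoFs single-valued plus boundary conditions); everything else is routine once Lemma~\eqref{eq:discretePoincareWh} and the interpolation bound from \cite{Huang2020} are in hand. With continuity, kernel coercivity, and the inf-sup condition established, the Babuška–Brezzi theorem~\cite{BoffiBrezziFortin2013} delivers well-posedness of \eqref{Hgradcurlvariable1}–\eqref{Hgradcurlvariable2}.
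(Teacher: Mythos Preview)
Your proof is correct and follows the route the paper intends: the paper gives no detailed argument, merely stating that well-posedness follows from the discrete Poincar\'e inequality \eqref{eq:discretePoincareWh}. You have correctly fleshed this out via the Babu\v{s}ka--Brezzi framework, with \eqref{eq:discretePoincareWh} supplying the kernel coercivity and the choice $\boldsymbol v_h=\grad\phi_h$ (which indeed lies in $W_h$ and is fixed by $I_h^{\curl}$) giving the inf-sup condition; this is exactly what the paper's one-line justification presupposes.
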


We will show the equivalence between the hybridized mixed finite element method \eqref{hybridishhjtype1}-\eqref{hybridishhjtype2} with $k=1$ and nonconforming finite element method \eqref{Hgradcurlvariable1}-\eqref{Hgradcurlvariable2}.

\begin{theorem}
Let $(\boldsymbol{w}_h, \phi_h) \in W_h\times\mathring{\mathbb{V}}_h^{\grad}$
be the solution of the nonconforming finite element method \eqref{Hgradcurlvariable1}-\eqref{Hgradcurlvariable2}. Then $(\grad_{\mathcal T_h}\curl_{\mathcal T_h}\boldsymbol{w}_h, I_h^{\curl}\boldsymbol{w}_h, \phi_h, Q_{\mathcal{F}_h}(\boldsymbol{n}_F\times\curl\boldsymbol{w}_h)) \in {\Sigma}_{0,h}^{-1} \times \mathring{\mathbb{V}}^{\curl}_h \times \mathring{\mathbb{V}}^{\grad}_h \times \Lambda_h$ is the solution of the hybridized mixed finite element method \eqref{hybridishhjtype1}-\eqref{hybridishhjtype2} with $k = 1$.
\end{theorem}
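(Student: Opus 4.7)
The plan is to verify that the candidate tuple
$(\boldsymbol{\sigma}_h, \boldsymbol{u}_h, \phi_h, \boldsymbol{\lambda}_h) := (\grad_{\mathcal T_h}\curl_{\mathcal T_h}\boldsymbol{w}_h,\, I_h^{\curl}\boldsymbol{w}_h,\, \phi_h,\, Q_{\mathcal F_h}(\boldsymbol{n}_F\times\curl\boldsymbol{w}_h))$
directly satisfies the two hybrid equations \eqref{hybridishhjtype1}--\eqref{hybridishhjtype2}. Membership in the spaces is immediate: for $k=1$ one has $\curl\boldsymbol{w}_h|_T \in \mathbb P_1(T;\mathbb R^3)$, so $\boldsymbol{\sigma}_h$ is piecewise constant and traceless, i.e. lies in $\Sigma_{0,h}^{-1}$; and the face DoF~\eqref{Hgradcurldof2} makes $\int_F \boldsymbol{n}_F\times\curl\boldsymbol{w}_h\,\mathrm dS$ single-valued on each $F$, so $\boldsymbol{\lambda}_h\in\Lambda_{0,h}$ is well defined.

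For \eqref{hybridishhjtype1}, given a test pair $(\boldsymbol{\tau}_h,\psi_h)$, I rewrite $(\boldsymbol{\sigma}_h,\boldsymbol{\tau}_h)$ by element-wise integration by parts as $\sum_T(\curl\boldsymbol{w}_h,\boldsymbol{\tau}_h\boldsymbol{n})_{\partial T}$ (the interior divergence vanishes since $\boldsymbol{\tau}_h$ is piecewise constant) and split each boundary integral into its normal-normal and tangential-normal components. The $(\grad\psi_h,\boldsymbol{u}_h)$ piece vanishes by \eqref{Hgradcurlvariable2}. Regrouping the normal-normal sum face-wise yields $\sum_F\llbracket \boldsymbol{n}_F^{\intercal}\boldsymbol{\tau}_h\boldsymbol{n}_F\rrbracket \int_F \boldsymbol{n}_F\cdot\curl\boldsymbol{w}_h$; Stokes' theorem on $F$ rewrites the inner integral as $\int_{\partial F}\boldsymbol{w}_h\cdot\boldsymbol{t}\,\mathrm ds$, which by the edge DoF \eqref{Hgradcurldof1} equals $\int_F\boldsymbol{n}_F\cdot\curl\boldsymbol{u}_h$, exactly cancelling the normal-normal part of $b_h(\boldsymbol{\tau}_h,\psi_h;\boldsymbol{u}_h)$. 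The tangential-normal sum regroups face-wise to $\sum_F \llbracket\boldsymbol{n}_F\times\boldsymbol{\tau}_h\boldsymbol{n}_F\rrbracket\cdot\int_F\boldsymbol{n}_F\times\curl\boldsymbol{w}_h$; since $\boldsymbol{n}_F\times\boldsymbol{\tau}_h\boldsymbol{n}_F$ is constant on $F$, this pairing equals the pairing against $|F|\boldsymbol{\lambda}_h$ by the defining $L^2$-projection, cancelling $c_h(\boldsymbol{\tau}_h,\boldsymbol{\lambda}_h)$.

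For \eqref{hybridishhjtype2}, I split into the two test directions. The case $\boldsymbol{v}_h=0$ requires $c_h(\boldsymbol{\sigma}_h,\boldsymbol{\mu}_h)=0$ for every $\boldsymbol{\mu}_h\in\Lambda_h$, i.e.\ $\boldsymbol{\sigma}_h\in\Sigma_h^{\rm tn}$; this is the crucial step. I extract it from \eqref{Hgradcurlvariable1} by selecting $\boldsymbol{v}_h\in W_h$ whose edge DoFs \eqref{Hgradcurldof1} vanish but whose face DoFs \eqref{Hgradcurldof2} are arbitrarily prescribed. The same face-wise rearrangement as above reduces the equation to $\sum_F\llbracket\boldsymbol{n}_F\times\boldsymbol{\sigma}_h\boldsymbol{n}_F\rrbracket\cdot\int_F\boldsymbol{n}_F\times\curl\boldsymbol{v}_h\,\mathrm dS=0$, all other contributions dropping out; freely varying the face DoFs forces the jump to vanish on every interior face. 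For the case $\boldsymbol{\mu}_h=0$ with arbitrary $\boldsymbol{v}_h\in\mathring{\mathbb V}_h^{\curl}$, I lift via the surjectivity $I_h^{\curl}W_h=\mathring{\mathbb V}_h^{\curl}$ to $\tilde{\boldsymbol{v}}_h\in W_h$ with $I_h^{\curl}\tilde{\boldsymbol{v}}_h=\boldsymbol{v}_h$ and test \eqref{Hgradcurlvariable1} against $\tilde{\boldsymbol{v}}_h$. The tangential-normal part of $(\boldsymbol{\sigma}_h,\grad_{\mathcal T_h}\curl\tilde{\boldsymbol{v}}_h)$ now vanishes because the now-single-valued $\boldsymbol{n}_F\times\boldsymbol{\sigma}_h\boldsymbol{n}_F$ meets the single-valued face average of $\boldsymbol{n}_F\times\curl\tilde{\boldsymbol{v}}_h$ with equal and opposite contributions from the two sides of each interior $F$; the normal-normal part matches $b_h$ precisely as in the preceding paragraph, and the $(I_h^{\curl}\tilde{\boldsymbol{v}}_h,\grad\phi_h)=(\boldsymbol{v}_h,\grad\phi_h)$ term absorbs the Lagrange-multiplier-free remainder, leaving $b_h(\boldsymbol{\sigma}_h,\phi_h;\boldsymbol{v}_h)=-\langle\boldsymbol{f},\boldsymbol{v}_h\rangle$.

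The main obstacle is the careful bookkeeping of sign conventions (element outward normal $\boldsymbol{n}$ versus fixed face normal $\boldsymbol{n}_F$) when converting $\sum_T(\cdot)_{\partial T}$ into $\sum_F(\llbracket\cdot\rrbracket,\cdot)_F$, and arguing that the $L^2$-projection $Q_{\mathcal F_h}$ in the definition of $\boldsymbol{\lambda}_h$ can be removed when tested against the piecewise-constant face trace $\boldsymbol{n}_F\times\boldsymbol{\tau}_h\boldsymbol{n}_F\in\Lambda_{0,h}$. Once Stokes' theorem on each face is combined with the two kinds of DoFs of $W_h$ and the commuting-diagram property of $I_h^{\curl}$ (which identifies $\int_{\partial F}\boldsymbol{w}_h\cdot\boldsymbol{t}$ with $\int_{\partial F}I_h^{\curl}\boldsymbol{w}_h\cdot\boldsymbol{t}$), the remaining cancellations are algebraic.
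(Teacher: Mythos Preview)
Your proposal is correct and follows essentially the same route as the paper. Both arguments establish $\boldsymbol{\sigma}_h\in\Sigma_h^{\rm tn}$ by testing \eqref{Hgradcurlvariable1} against $\boldsymbol{v}_h\in W_h$ with vanishing edge DoFs, then verify the two hybrid equations by lifting test functions through the surjection $I_h^{\curl}:W_h\to\mathring{\mathbb V}_h^{\curl}$ and integrating by parts element-wise; the only difference is cosmetic, in that you spell out the Stokes-on-faces step and the edge-DoF matching explicitly, whereas the paper packages this as an appeal to the identity \eqref{eq:curdivhIhcurl} (extended to $W_h$).
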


\begin{proof}
Choose $\boldsymbol{v}_h \in W_h$
such that DoF \eqref{Hgradcurldof1} vanishes, then $I_h^{\curl}\boldsymbol{v}_h = 0$. Applying the integration by parts on the left hand side of \eqref{Hgradcurlvariable1}, we get 
\begin{align*}
0&=\sum_{T\in\mathcal T_h}((\grad\curl\boldsymbol{w}_h)\boldsymbol{n},\curl\boldsymbol{v}_h)_{\partial T}\\
&=\sum_{T\in\mathcal T_h}(\boldsymbol{n}\times(\grad\curl\boldsymbol{w}_h)\boldsymbol{n},\boldsymbol{n}\times\curl\boldsymbol{v}_h)_{\partial T} \\
&=\sum_{F\in\mathring{\mathcal{F}}_h}(\llbracket\boldsymbol{n}\times(\grad_{\mathcal T_h}\curl_{\mathcal T_h}\boldsymbol{w}_h)\boldsymbol{n}\rrbracket, \boldsymbol{n}_F\times\curl_{\mathcal T_h}\boldsymbol{v}_h)_{F}.	
\end{align*}
By the arbitrariness of the DoF \eqref{Hgradcurldof2} for $\boldsymbol{v}_h$, we obtain
$\llbracket\boldsymbol{n}\times(\grad_{\mathcal T_h}\curl_{\mathcal T_h}\boldsymbol{w}_h)\boldsymbol{n}\rrbracket_F= 0$ for all $F \in \mathring{\mathcal{F}}_h$, that is $\grad_{\mathcal T_h}\curl_{\mathcal T_h}\boldsymbol{w}_h \in\Sigma_h^{\rm tn}$.
For all $\boldsymbol{v}_h \in W_h$ and $\boldsymbol{\mu}_h \in \Lambda_h$, we get from
\eqref{eq:curdivhIhcurl}, the integration by parts, and \eqref{Hgradcurlvariable1} that
\begin{align*}
&\quad b_h(\grad_{\mathcal T_h}\curl_{\mathcal T_h}\boldsymbol{w}_h,\phi_h; I_h^{\curl}\boldsymbol{v}_h) + c_h(\grad_{\mathcal T_h}\curl_{\mathcal T_h}\boldsymbol{w}_h, \boldsymbol{\mu}_h) \\
&= b_h(\grad_{\mathcal T_h}\curl_{\mathcal T_h}\boldsymbol{w}_h,\phi_h; I_h^{\curl}\boldsymbol{v}_h) \\
&= b_h(\grad_{\mathcal T_h}\curl_{\mathcal T_h}\boldsymbol{w}_h,0; \boldsymbol{v}_h) +(I_h^{\curl}\boldsymbol{v}_h, \grad\phi_h) \\
&=-(\grad_{\mathcal T_h}\curl_{\mathcal T_h}\boldsymbol{w}_h, \grad_{\mathcal T_h}\curl_{\mathcal T_h}\boldsymbol{v}_h) +(I_h^{\curl}\boldsymbol{v}_h,\grad\phi_h)=-\langle\boldsymbol{f}, I_h^{\curl}\boldsymbol{v}_h\rangle.
\end{align*}
Notice that $ I_h^{\curl}: W_h\to\mathring{\mathbb{V}}_h^{\curl}$ is onto, hence $(\grad_{\mathcal T_h}\curl_{\mathcal T_h}\boldsymbol{w}_h, \phi_h)$ satisfies \eqref{hybridishhjtype2}.

On the side hand,
for all $\boldsymbol{\tau}_{h} \in \Sigma_{0,h}^{-1}$ and $\psi_h \in\mathring{\mathbb{V}}_h^{\grad}$,
apply \eqref{eq:curdivhIhcurl}, \eqref{Hgradcurlvariable2} and the integration by parts to get
\begin{align*}
&\quad (\grad_{\mathcal T_h}\curl_{\mathcal T_h}\boldsymbol{w}_h, \boldsymbol{\tau}_{h})+b_h\left(\boldsymbol{\tau}_{h},\psi_h; I_h^{\curl}\boldsymbol{w}_h\right) + c_h(\boldsymbol{\tau}_{h},  Q_{\mathcal{F}_h}(\boldsymbol{n}_F\times\curl\boldsymbol{w}_h)) \\
&=(\grad_{\mathcal T_h}\curl_{\mathcal T_h}\boldsymbol{w}_h, \boldsymbol{\tau}_{h})+b_h\left(\boldsymbol{\tau}_{h},0; \boldsymbol{w}_h\right) + c_h(\boldsymbol{\tau}_{h},  \boldsymbol{n}_F\times\curl\boldsymbol{w}_h) \\
&\quad+(I_h^{\curl}\boldsymbol{w}_h,\grad\psi_h) \\
&=(\grad_{\mathcal T_h}\curl_{\mathcal T_h}\boldsymbol{w}_h, \boldsymbol{\tau}_{h})+b_h\left(\boldsymbol{\tau}_{h},0; \boldsymbol{w}_h\right) -\sum_{T \in \mathcal{T}_{h}}\left(\boldsymbol{n}\times \boldsymbol{\tau}_h\boldsymbol{n}, \boldsymbol{n}\times\curl\boldsymbol{w}_h\right)_{\partial T}=0.
\end{align*}
That is $(\grad_{\mathcal T_h}\curl_{\mathcal T_h}\boldsymbol{w}_h, I_h^{\curl}\boldsymbol{w}_h, Q_{\mathcal{F}_h}(\boldsymbol{n}_F\times\curl\boldsymbol{w}_h))$ satisfies \eqref{hybridishhjtype1}.
\end{proof}

\bibliographystyle{abbrv}
\bibliography{./refs}
\end{document}